\newcommand{\R}{\mathbb{R}}
\newcommand{\N}{\mathbb{N}}
\newcommand{\T}{\mathbb{T}}
\newcommand{\Z}{\mathbb{Z}}
\newcommand{\C}{\mathbb{C}}
\newcommand{\LL}{\mathscr{L}}
\newcommand{\Hdiv}{\mathcal{H}}
\newcommand{\ie}{\textit{i.e., }}
\numberwithin{equation}{section}
\renewcommand{\d}{\partial}
\newcommand{\nocontentsline}[3]{}
\newcommand{\tocless}[2]{\bgroup\let\addcontentsline=\nocontentsline#1{#2}\egroup}
\newcommand{\e}{\varepsilon}
\newtheorem{thm}{THEOREM}[section]
\newtheorem{remark}[thm]{REMARK}
\newtheorem{lem}[thm]{LEMMA}
\newtheorem{defn}[thm]{DEFINITION}
\newtheorem{prop}[thm]{PROPOSITION}
\newtheorem{cor}[thm]{COROLLARY}
\newtheorem{hyp}{Assumption}
\newcounter{thmbiss}
\def\<{\langle}
\def\>{\rangle}
\title[Rapid stabilization for the linearized Water Waves system]{Fredholm backstepping for critical operators and application to rapid  stabilization for the linearized water waves}
\author[L.~Gagnon]{Ludovick Gagnon} 
\thanks{Université de Lorraine, CNRS, Inria équipe SPHINX,  F-54000 Nancy, France. E-mail: \texttt{ludovick.gagnon@inria.fr.}}
\author[A.~Hayat]{Amaury Hayat}
\thanks{CERMICS, \'{E}cole des Ponts ParisTech, 6 - 8, Avenue Blaise Pascal, Cité Descartes—Champs sur Marne, 77455 Marne la Vall\'{e}e, France. E-mail: \texttt{amaury.hayat@enpc.fr.}}
\author[S.~Xiang]{Shengquan Xiang}
\thanks{School of Mathematical Sciences, Peking University, 100871, Beijing, P. R. China.  E-mail: \texttt{shengquan.xiang@math.pku.edu.cn.}}
\author[C.~Zhang]{Christophe Zhang}
\thanks{Université de Lorraine, CNRS, Inria équipe SPHINX,  F-54000 Nancy, France. E-mail: \texttt{Christophe.zhang@inria.fr.}}
\begin{document}
\maketitle

\begin{abstract}
Fredholm-type backstepping transformation,
introduced by Coron and Lü, {\color{black} intensively developed over the last decade and}
has become a powerful tool for rapid stabilization. Its strength lies in its systematic approach, allowing to deduce rapid stabilization from approximate controllability. But limitations {\color{black} to prove the existence of a Fredholm backstepping transformation} exist with the current approach for operators of the form $|D_x|^\alpha$ for $\alpha \in (1,3/2]$. We present here a new compactness/duality method which hinges on Fredholm's alternative to overcome the $\alpha=3/2$ threshold. More precisely, the compactness/duality method allows to prove the existence of a Riesz basis for the backstepping transformation for skew-adjoint operator\textcolor{black}{s satisfying} $\alpha>1$, a key step in the construction of the Fredholm backstepping transformation, where the usual methods only work for $\alpha>3/2$. The illustration of this new method is shown on the rapid stabilization of the linearized capillary-gravity water waves equation exhibiting an operator of critical order $\alpha=3/2$. \\

\noindent
{\sc Keywords:} {water waves, compactness/duality method, Fredholm transformation, backstepping, rapid stabilization.}
\\
\noindent {\sc 2010 MSC:}  35S50, 76B15, 93B05.
\end{abstract}
\setcounter{tocdepth}{2}
\tableofcontents

\section{Introduction}
 
Since its introduction by Coron and L\"u for the rapid stabilization of the Korteweg-de Vries equation \cite{CoronLu14} and Kuramoto-Sivashinsky equation \cite{CoronLu15}, the Fredholm{\color{black}-type backstepping} transformation has been applied successfully in the past decade for the rapid stabilization of a large class of equations. It consists in finding an {\color{black}isomorphism-feedback} pair $(T,K)$ {\color{black} where $K$ is the feedback operator and $T$} maps a system of the form,
    \begin{equation}
    \label{eq:sys0}
    \begin{split}
        &\partial_{t}u= Au + BKu,
    \end{split}
    \end{equation}
    to the rapidly exponentially stable system,
    \begin{equation}
    \begin{split}
        &\partial_{t}v =(A-\lambda I)v,
    \end{split}
    \end{equation}
where $A$ is the generator of a strongly continuous semigroup, $B$ is an unbounded operator and $\lambda$ is an arbitrarily large positive number.
Compared with the original Volterra transformation introduced by Krsti{\'c} and Balogh \cite{BK1}, the Fredholm transformation possesses the advantage of presenting a systematic approach to prove the rapid stabilization based on spectral properties of the spatial operator $A$ and from suitable controllability assumptions.
However, the classical approach for the Fredholm transformation fails to deal with operators {\color{black} $A$} \textcolor{black}{that have eigenvalues scaling as $n^{\alpha}$ with $\alpha\in(1,3/2]$, for instance, operators} 
of the form $|D_x|^\alpha$ for $\alpha \in (1,3/2]$. Indeed, one key step in proving the existence of the Fredholm transformation $T$ is to prove that the family $(T\varphi_n)_n$ is a Riesz basis of the state space, where, 
\begin{gather*}
    \{(\varphi_n, \lambda_n)\}_{n\in \N} \textrm{ are the eigenmodes of the spatial operator } A.
\end{gather*}
 The classical approach refers to the way of tackling the existence of $T$ by proving that the family $(T\varphi_n)_n$ is quadratically close to the eigenfunction basis $(\varphi_n)_n$ (see Definition \ref{def-riesz-bas} (3) for a precise statement). However, when $\alpha\in (1, 3/2]$, the growth of the eigenvalues is too slow, preventing the use of such criteria. Thus, for operators behaving as $|D_x|^\alpha$ with $\alpha \in (1,3/2]$, the Fredholm alternative remains an open question.

\subsection{The compactness/duality method} \vphantom{water-waves}

In this paper we present a new method to answer this question. This method is based on a new compactness/duality approach to prove that the family $(T\varphi_n)_n$ is indeed a Riesz basis in sharp spaces for the whole range $\alpha>1$ {\color{black} of skew-adjoint operators.} The challenging part in proving that the family $(T\varphi_n)_n$ is a Riesz basis is the coercivity estimate (see the left-hand side estimate of \eqref{ineq-riesz}). We proceed with a contradiction argument to prove this inequality. Using the expression of $T\varphi_n$, we are able to prove that $T$ can be decomposed in an invertible part and a compact part. Then, the desired uniform estimate can be deduced from the $\omega$-independence property. A further inspection of the duality between the $\omega$-independence of $(T\varphi_n)_n$ in $H^{r}$ and the density of $(T^*\overline{\varphi_n})_n$ in $(H^{r})^{*}\simeq H^{-r}$ finally leads to the required property\footnote{\textcolor{black}{In fact, this holds in more general spaces $\Hdiv^{r}$ and $\Hdiv^{-r}$ that correspond exactly to $H^{r}$ and $H^{-r}$ when the operator $A$ acts as a Fourier multiplier (see Section \ref{sec:notations}).}}. 

A second important step of our method is to deal with the so-called $TB=B$ uniqueness condition, introduced in \cite{Coron_ICIAM15} for finite-dimensional systems, used implicitly in \cite{CoronLu14, CoronLu15}. It was first introduced explicitly for PDEs in \cite{CGM} to handle the nonlocal term arising from the distributed controls. In the original approach, proposed by \cite{CoronLu14}, and used since then {\color{black} for the Fredholm alternative}, the {\color{black}uniqueness} condition is solved thanks to the quadratically close property mentioned above.
With this new method, we are able to sidestep this limitation thanks to a fine decomposition of the $TB=B$ condition, which allows us to define the transformation $T$ along with the feedback law $K$. 

Beyond the $\alpha= 3/2$ threshold, this new method leads to sharp Riesz basis properties for a large class of skew-adjoint operators, including Fourier multiplier operators, as long as the high frequencies scale as $n^{\alpha}$ for $\alpha >1$. We apply this new strategy to prove the rapid stabilization of the linearized capillary-gravity water waves equation, which exhibits a spatial operator behaving like $|D_x|^{3/2}$. This is an example corresponding to the critical case $\alpha=3/2$ which remained out of reach until now.


\label{sec:introduction}

\subsection{Statement of the main results}

We consider in this paper the rapid stabilization of the system \eqref{eq:sys0}, that is to seek, for any $\lambda>0$, 
a control feedback law $w(t)= K u(t,\cdot)$, such that the solution of \eqref{eq:sys0} satisfies\footnote{{\color{black}Throughout the article, $a \lesssim b$ denotes that there exists $C>0$ such that $a\leq Cb$ and $a\sim b$ means that there exist $c,C>0$ such that $c a \leq b \leq Ca$. When the quantities involved depends on $n\in \N$, the constants $c,C>0$ are uniform with respect to $n$.}},
\[
\|u(t)\| \lesssim  e^{-\lambda t} \|u_0\|, \quad \forall t\in (0, +\infty). 
\]
We stress that the feedback law $K$ is of finite rank, that is, we aim at stabilizing system \eqref{eq:sys0} with a finite number of scalar feedback controls. {\color{black}More precisely, we do it with the minimal number of scalar controls for which system \eqref{eq:sys0} can be controllable, under suitable assumptions on the control operator $B$ (see Section \ref{sec:mainresults}  for more details).}

 
{\color{black}The precise statement of our main results relies on the spectral properties of the operator $A$ and on the {\color{black}regularity} properties of the control operator $B$ such that it is admissible (see Assumption \ref{asump1}), and \eqref{eq:sys0} is controllable. In order to introduce the techniques used in the article and to motivate the new compactness/duality method, we begin by giving a simplified statement of our main results \textcolor{black}{when the operator only has simple eigenvalues} (the precise statements for the general case are found in Section \ref{sec:mainresults}, \textcolor{black}{see Theorems \ref{thalpha} and \ref{thalpha-2}})}
\begin{thm}
\label{thalphaintro}
\textcolor{black}{Let $A$ {\color{black} be} a skew-adjoint operator with simple eigenvalues satisfying 
\eqref{hyp:1}}
below, and \textcolor{black}{in particular such that its eigenvalues $\lambda_{n}$ satisfy $\textcolor{black}{|\lambda_{n}|}\sim n^{\alpha}$ with $\alpha >1$.} Let $B\in {\Hdiv}^{-{\alpha/2}}$ satisfying the controllability Assumption \ref{asump1} below. Then, for any $\lambda>0$, there exist  a bounded linear operator $K\in \mathcal{L}(\Hdiv^{\textcolor{black}{\alpha/2}};\mathbb{C})$ and an operator $T$ such that $T$ is an isomorphism from $\Hdiv^{r}$ to itself for any $r\in(1/2-\alpha,\alpha-1/2)$ and maps the system, 
\begin{equation}\label{introgeneoperaclosed}
    \partial_{t}u=A u+BK(u),\; 
\end{equation}
to the system
\begin{equation*}
 \partial_{t}v=A v-\lambda v.\;  
\end{equation*}
Consequently, the closed-loop system \eqref{introgeneoperaclosed} is exponentially stable in $\Hdiv^{r}$ for $r\in (1/2- \alpha, \alpha- 1/2)$ with decay rate $\lambda$.
\end{thm}
{\color{black}
\begin{remark}
Here, the spaces $\Hdiv^{s}$, defined properly in Section \ref{sec:notations}, are defined from the eigenbasis of the operator $A$. They are such that $\Hdiv^0\textcolor{black}{=H}$, \textcolor{black}{where H is a given Hilbert space,} and $D(A)=\Hdiv^{\alpha}$. \textcolor{black}{In particular, when} the spatial operator is defined as $A=ih(|D_{x}|)$ over a compact one-dimensional domain with \textcolor{black}{periodic} boundary conditions, with $h$ an analytic function, \textcolor{black}{and  $H$ is chosen as $L^{2}(\mathbb{T};\mathbb{C})$,} the spaces $\Hdiv^{s}$ \textcolor{black}{correspond to the} classical Sobolev spaces \textcolor{black}{$H^{s}(\mathbb{T};\mathbb{C})$, denoted $H^{s}(\mathbb{T})$ in the following}. Thus,  the spaces $\Hdiv^{s}$ are a natural generalization of the classical Sobolev spaces, and the statement of our main results holds in this  general framework. In the present article, we exhibit the spaces $\Hdiv^{s}$ with the example of the linearized water waves equation on the 1-dimensional torus $\mathbb{T}$ and highlight the \textcolor{black}{correspondance} $\Hdiv^s \textcolor{black}{= H^s(\T)}$. 
\end{remark}}  
 

The proof of Theorem \ref{thalphaintro} relies on our compactness/duality method, allowing to prove the existence of a Fredholm operator for the backstepping method \textcolor{black}{as long as $\alpha >1$ (and in particular below the critical case $\alpha=3/2$)}. 

\subsection{The capillary-gravity water waves equation} 
\vphantom{Water-waves}

{\color{black} One application of our main results is the rapid stabilization of the linearized capillary-gravity water waves equation, which exhibits a skew-adjoint operator of order $\alpha=3/2$. They are relevant for modelling the motion and stability of perfect fluids where the surface tension and capillarity cannot be neglected; for instance for small characteristic scales or when waves are breaking at certain wave frequencies (\cite{zbMATH06168816,Zakharov}). The linearized water waves equation writes (the full derivation from the free surface Euler equation is done in Section  \ref{sec:water-waves}),
\begin{equation}\label{eq:WWintro}
\d_t u(t) = \LL u(t) + Bw(t),
\end{equation}
where 
\begin{equation}
\label{op-WW}
\mathscr{L}:= -i \left((g- \d_x^2)G[0,h]\right)^{1/2},
\end{equation}
and where $G[0,h]=|D_x|\textrm{tanh}(h|D_x|)$ is defined as a Fourier multiplier on periodic functions. In \eqref{eq:WWintro}, $u$ is complex-valued, and \textcolor{black}{the operator $B=(B_1,B_2)\in \Hdiv^{-3/4} $ (which can be identified with $H^{-3/4}(\T)$) is real-valued. In this setting, Assumption \ref{asump1}, given in Section \ref{sec:mainresults}, will give a sufficient condition for the controllability of \eqref{eq:WWintro} in $\Hdiv^0 \textcolor{black}{= L^{2}(\mathbb{T})}$.
}

We prove, based on the main result of this paper, the rapid stabilization of the linearized water waves equation \eqref{eq:WWintro} (see Corollaries \ref{th1} and \ref{coro:expostab} for a more precise statement):

\begin{thm}\label{thm:intromain1}[\textcolor{black}{Water waves equation}]
Let $B\in (H^{-3/4}{(\mathbb{T})})^2$ satisfying the controllability {\color{black}and admissibility} Assumption \ref{asump1} \textcolor{black}{below}. Then, for any $\lambda>0$, there exists a bounded linear operator $K\in \mathcal{L}(H^{3/4}{(\mathbb{T})};\mathbb{C}^{2})$ and an operator $T$ being an isomorphism from $H^{r}(\mathbb{T})$ to itself for any $r\in(-1,1)$ and {mapping} the closed-loop system 
\begin{equation}\label{introclosedloopsystem}
\begin{array}{ll}
\partial_t u =\mathscr{L}u +BK(u), & (t,x) \in \mathbb{R}_{+} \times \T,
\end{array} 
\end{equation}
to the system
\begin{equation*}
  \begin{array}{ll}
 \partial_t v =\mathscr{L}v -\lambda v , & (t,x) \in \mathbb{R}_{+} \times \T, 
 \end{array} 
\end{equation*}
where $\mathscr{L}$ is the linearized  water-wave operator given in \eqref{op-WW}. Consequently, the closed-loop system \eqref{introclosedloopsystem} is exponentially stable in $H^r{(\mathbb{T})}$ for any $r\in (-1, 1)$.
\end{thm}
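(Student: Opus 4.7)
I would follow the Fredholm backstepping paradigm: find a bounded feedback $K \in \mathcal{L}(H^{3/4}; \C^2)$ and an operator $T$ on $H^r$ with $r \in (-1,1)$ realizing the intertwining
\[
T(\mathscr{L} + BK) = (\mathscr{L} - \lambda)T,
\]
together with the uniqueness constraint $TB = B$. Writing $\{\varphi_n\}$ for the eigenbasis of the skew-adjoint operator $\mathscr{L}$ (with double eigenvalues $i\mu_n$, $|\mu_n| \sim |n|^{3/2}$) and projecting the intertwining equation on $\varphi_m$ forces
\[
(i(\mu_n - \mu_m) + \lambda)\,\langle T\varphi_n, \varphi_m\rangle = -\,k_n \cdot b_m,
\]
where $b_m := \langle B, \varphi_m\rangle \in \C^2$ and $k_n := K\varphi_n \in \C^2$. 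This fixes $T$ entry-by-entry in terms of the gain sequence $\{k_n\}$, while the $TB = B$ constraint becomes a linear system that in turn pins down $\{k_n\}$. I would solve this system by the fine decomposition advertised in Section~1.1, avoiding the ill-posed Neumann-series inversion that fails at $\alpha = 3/2$, and use the controllability lower bounds on $\{b_n\}$ from Assumption \ref{asump1} to verify that the resulting $K$ is bounded from $H^{3/4}$ into $\C^2$.

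\textbf{Riesz basis via compactness/duality.} The heart of the proof is to show that $\{T\varphi_n\}$ is a Riesz basis of $H^r$ for every $r \in (-1,1)$. At the critical scaling $|\mu_n|\sim|n|^{3/2}$, the off-diagonal entries of $T-I$ do not decay fast enough for the classical quadratic-closeness criterion recalled in Definition \ref{def-riesz-bas}(3) to apply. Instead, I would implement the new compactness/duality method from Section~1.1: split $T$ into an invertible diagonal part plus a compact correction, so that the upper Riesz bound comes for free and the Bari-type characterization reduces to proving $\omega$-independence of $\{T\varphi_n\}$ in $H^r$. Through the duality $(H^r)^\ast \simeq H^{-r}$, this $\omega$-independence is equivalent to the density of $\mathrm{span}\{T^\ast\overline{\varphi_n}\}$ in $H^{-r}$. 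I would establish this density by contradiction: any $f \in H^{-r}$ annihilating every $T^\ast\overline{\varphi_n}$ would satisfy a resonant spectral identity for $\mathscr{L}$ which, together with Assumption \ref{asump1}, forces $f = 0$.

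\textbf{Isomorphism and exponential decay.} Once the Riesz basis property is secured on the full scale $r \in (-1,1)$, the operator $T$ is automatically an isomorphism of each $H^r$ because $\{\varphi_n\}$ is itself a Riesz basis there (orthonormal after rescaling by $\langle n\rangle^r$). The intertwining identity then transports the closed-loop dynamics to $\partial_t v = (\mathscr{L} - \lambda)v$, which decays at rate $\lambda$ in $H^r$ since $\mathscr{L}$ is skew-adjoint; pulling back through $T$ yields $\|u(t)\|_{H^r} \leq \|T\|\,\|T^{-1}\|\,e^{-\lambda t}\|u_0\|_{H^r}$, giving Corollary \ref{coro:expostab}.

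\textbf{Main obstacle.} The delicate point is handling two endpoints simultaneously: the regularity range $r \in (-1,1)$ must be kept sharp while $B$ sits exactly at the dual endpoint $H^{-3/4}$. The compactness/duality step must therefore track regularity symmetrically in $r$ and $-r$, and the $TB = B$ decomposition must be arranged so that $K$ truly lands in $\mathcal{L}(H^{3/4}; \C^2)$ rather than in a weaker dual space. These two balancing acts — compounded by the double-eigenvalue structure, which forces $B$ and $K$ to be genuinely two-dimensional — are precisely where the critical scaling $\alpha = 3/2$ becomes borderline, and where I expect the bulk of the technical work to concentrate.
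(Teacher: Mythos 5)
Your overall route coincides with the paper's (Fredholm backstepping with the $TB=B$ constraint, a compact-plus-invertible splitting, and an $\omega$-independence/density duality), but there is a genuine gap at the step where you declare that, once the Riesz basis property is secured, ``$T$ is automatically an isomorphism of each $H^r$.'' The family to which the compactness/duality method applies is the gain-free candidate family $q_n=\sum_p \varphi_p/(\lambda_n-\lambda_p+\lambda)$ of \eqref{def-qn} (Propositions \ref{S:fredholm}--\ref{qnr:Riesz}), whereas $T\varphi_n=-K_n\,\tau q_n$: to pass from the Riesz basis property of $(n^{-r}q_n)$ to invertibility of $T$ you need $0<c\le |K_n|\le C$, and in particular $K_n\neq 0$ for \emph{every} $n$. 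The upper bound does come out of the fine $TB=B$ decomposition $-b_nK_n=\lambda+k_n$ with $k_n\to 0$ (Lemmas \ref{lem:defk}--\ref{lemkncomp}), but the nonvanishing/lower bound is not automatic: the paper obtains it only after proving that $T-\mathrm{Id}$ is compact on $H^{-3/4}$ (Lemma \ref{lem:Tfredholm}) and that $\ker T^{*}=\{0\}$ via a spectral/holomorphic-function argument using the operator equality, the resolvent of $\mathscr{L}$, and $TB=B$ together with $b_n\neq 0$ (Lemma \ref{lem:iso-34} and Appendix \ref{isomorph}), and only then propagates the isomorphism to all $r\in(-1,1)$ (Proposition \ref{prop:isot}). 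If instead you intend to run the compactness/duality argument directly on $\{T\varphi_n\}$, note two things: the compact splitting of $T$ itself already requires the quantitative decay of $k_n$, which must be extracted from $TB=B$ before any invertibility is known; and your one-line ``resonant spectral identity forces $f=0$'' is precisely the $\ker T=\{0\}$ step that carries the real difficulty. Moreover the paper's duality is not the generic identification $(H^r)^{*}\simeq H^{-r}$ you invoke: it exploits the conjugation symmetry of $q_n$ (the $\lambda_n$ are purely imaginary) so that $\omega$-independence of $(n^{-r}q_n)$ in $H^r$ for $r<0$ is deduced from the already established Riesz basis property of $(n^{r}q_n)$ in $H^{-r}$, with the $L^2$ case settled first by a separate holomorphic-function argument.

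A second, smaller but real, omission concerns the final claim of exponential stability. Since $B\notin L^2$, $BK$ is not a bounded perturbation of $\mathscr{L}$ on $H^r$, so ``pulling back through $T$'' presupposes that the closed-loop dynamics exist; the paper devotes Section \ref{sec-wellposed} to proving that $A+BK$ with domain $T^{-1}(H^{r+3/2})$ generates a $C^0$-semigroup on $(H^r,\|T\cdot\|_{H^r})$ via Lumer--Phillips (density, closedness, dissipativity of the operator and of its adjoint), which is also where the sharpness of the range $r\in(-1,1)$ appears. Your sketch should at least acknowledge and outline this well-posedness step before concluding the decay estimate.
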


\subsection{The Fredholm-type backstepping method}
\label{sec:related-method}

Let us introduce the heuristics of the backstepping method for PDEs, as well as the main steps of our proof. The backstepping method consists in proving the existence of a feedback law $w= Ku$ and an invertible operator $T$ that maps the solution $u$ of the equation to be stabilized,
\begin{equation}\label{eq:astabiliser}
\begin{cases}
{\partial_{t}}u(t)=Au(t)+Bw(t),\\
u(0)=u_0,
\end{cases}
\end{equation}
onto the solution $v$ of an exponentially stable equation (due to the natural dissipation of $A$ and the strong damping effect of $-\lambda I$),
\begin{equation}\label{eq:ciblestable}
\begin{cases}
\partial_{t}v(t)=(A-\lambda I)v(t),\\
v(0)=v_0,
\end{cases}
\end{equation}
where $u(t)$ and $v(t)= T u(t)$ belong to a Hilbert  space $H$, $A$ is the generator of a $C_{0}$ semigroup over the state space, $B$ is an unbounded operator satisfying some admissibility conditions (see, for instance, \cite{TucsnakWeissBook} for a definition) and the control $w(t)=Ku(t)$ is of feedback form to achieve stabilization. 
Notice that if such $T$ exists and is invertible, then the exponential decay is proved by choosing $v_0=Tu_0$,
\[
\|u(t)\| \leq \|T^{-1}\|\|v(t)\| \lesssim e^{(\omega_0-\lambda)t}\|T^{-1}\|\|T\|\|u_0\|,
\]
with 
$$
\omega_0=\omega_0(e^{tA}):=\inf_{t\in (0,\infty)} \log \Vert e^{tA}\Vert_{\mathcal{L}(H,H)}.
$$

The main challenge is to find an isomorphism-{feedback} pair $(T,K)$ such that this mapping can be achieved.
This problem is equivalent to find $(T,K)$ which solve {formally}, \begin{equation}\label{eq:TsansTB}
T(A+BK)=(A-\lambda I)T,
\end{equation}
obtained by taking formally the time derivative of $v=Tu$ and using \eqref{eq:astabiliser} and \eqref{eq:ciblestable}.  

The backstepping method was first introduced in finite dimension as a chain of integrators for feedback laws \cite{KrsticSmyshlyaev_Book}, but was then extended to PDEs by Krsti{\'c} and Balogh \cite{BK1} for Volterra transformations of the second kind,
\begin{equation*}\label{eq:volterra}
v(t,x)=Tu=u(t,x) - \int_0^x k(x,y)u(t,y)dy.
\end{equation*}
The abstract equation \eqref{eq:TsansTB} is then transformed to a non-standard PDE on the kernel $k$ {and most of the properties of the transformation $T$ are deduced from the properties of the kernel $k$}. In addition to leaving the classical framework of Cauchy theory, the kernels resulting from the Volterra transformation present boundary conditions on the diagonal $k(x,x)$ that proves to be very difficult to handle. Despite these difficulties, several methods have been developed to solve the PDE on the kernel of the Volterra transformation (successive approximations \cite{KrsticSmyshlyaev_Book},  explicit representations \cite{KrsticSmyshlyaev_Book} or method of characteristics \cite{MR4160602}) leading to a rich literature, the invertibility of the Volterra transformation being guaranteed. 

More recently, the Fredholm-type transformation,
\begin{equation*}
v(t,\cdot)=Tu(t,\cdot)=(Id+\mathcal{T}_{comp})u(t,\cdot),
\end{equation*}
was introduced by Coron and L\"u for the rapid stabilization of the Korteweg-de Vries equation \cite{CoronLu14} and Kuramoto-Sivashinsky equation \cite{CoronLu15} by means of the backstepping method. In this transformation, $T$ is a Fredholm operator with an invertible and a compact part. Although much more technical, the Fredholm transformation provides a systematic approach to the backstepping method based on the spectral properties of the operator $A$ and the controllability properties to prove the rapid stabilization for a large class of equations. 

Let us elaborate on the techniques involved with the backstepping method with the Fredholm transformation. A first crucial step is to consider the so-called uniqueness condition $TB=B$ to change the abstract equation \eqref{eq:TsansTB} into a system of two equations,
\begin{align}
TA+BK&=(A-\lambda I)T, \label{eq:introeqT} \\
TB&=B. \label{eq:introTB=B}
\end{align}
The uniqueness condition $TB=B$ was first introduced in \cite{Coron_ICIAM15} to prove the existence of $(T,K)$ solving \eqref{eq:introeqT}-\eqref{eq:introTB=B} in finite dimension, used implicitly in \cite{CoronLu14,CoronLu15} and finally stated explicitly in \cite{CGM}  to remove nonlocal terms involved in \eqref{eq:TsansTB} for distributed controls. 
The proof of existence of an invertible transformation $T$ over the state space and a feedback law stabilizing \eqref{eq:astabiliser} is then divided in the following steps: 
\begin{enumerate}
    \item[Step 1:]  Let {\color{black}$\lambda>0$ such that} $(\lambda_n+\lambda)_n \cap \sigma(A)= \emptyset$.  Notice that \eqref{eq:introeqT} is equivalent to,
\[
T\varphi_n = K_n (A-(\lambda_n+\lambda)I)^{-1}B,
\]
where $K_n=K \varphi_n$. We prove that,  
    \[
\tilde{T}\varphi_n = (A-(\lambda_n+\lambda)I)^{-1}B,
\]
is a Riesz basis family of $H$. 
\item[Step 2:] Let $B=\sum_n b_n \varphi_n$. Use the Riesz basis properties to solve the $TB=B$ uniqueness condition in a suitable sense,
\begin{align*} 
TB  = \sum_n b_n T \varphi_n 
  = \sum_n b_n K_n \tilde{T}\varphi_n 
  =B.
\end{align*}
\item[Step 3:] Show from $TB=B$ that $(K_n)_n$ is uniformly bounded from above and below. Then, using operator equality, 
 prove that $T : H \rightarrow H $ is continuous and invertible.
\item[Step 4:] Thanks to the operator equality,  the operator $A+BK$ generates a semigroup on $H$ (and{, in fact,} it also generates semigroups in $H^{\beta}$ with a certain range of $\beta$). Conclude on the rapid stabilization using the operator equality.
\end{enumerate} 
Aside of the seemingly different approach of 
hyperbolic systems \cite{coron:hal-03161523, CoronHuOlive16, MR4160602, Zhang-finite, ZhangRapidStab}, the proof of Step 1 and 2 relied heavily in the literature on the quadratically close criterion. 
Roughly speaking, it amounts to show, after some computations, that
\begin{equation*}
\sum\limits_{n\in\mathbb{N}}\sum\limits_{p\in\mathbb{N}\setminus\{n\}}\frac{1}{|\lambda_{n}-\lambda_{p}+\lambda|^{2}}<+\infty
\end{equation*}
which holds if the eigenvalues $\lambda_{n}$ of the operator $A$ scales as $n^{\alpha}$ with $\alpha>3/2$ but fails as soon as $\alpha\leq 3/2$.

We introduce in this article the compactness/duality approach to overcome the limitations of the quadratically close criterion coupled with  $\omega$-independence/density properties. 
Indeed, this criterion corresponds to the rather strong Hilbert-Schmidt criterion for compactness. 
It turns out that the compactness part can be proved in a more general way, and we establish the $\omega$-independence thanks to a new duality observation, thus overcoming the apparent limit of $\alpha>3/2$ (see Remark \ref{rmrk-compactness-Riesz}). 
This allows us to prove Step 1 for {\color{black}skew-adjoint }operators with eigenvalues scaling as $n^{\alpha}$ (for instance $i|D_x|^\alpha$) with $\alpha\in (1,3/2]$.

Let us also stress that the $TB=B$ uniqueness condition is more difficult to handle than it seems since $B$ is an unbounded operator. Indeed, if $B$ were to be bounded, then from, 
\[
 \sum_n b_n K_n \tilde{T}\varphi_n =B,
\]
one would be tempted to deduce that the sequence $b_n K_n$ belongs to $\ell^2$. Moreover, the controllability assumption implies that the sequence of $|b_n|$ is bounded from below (it is impossible from the assumption that $B$ is bounded but let us assume it is for the sake of the argument) and therefore one would conclude that $K_n \in \ell^2$. But then, with the expression of $T$, it is not difficult to prove that in this case that the transformation $T$ would be compact and therefore not invertible. 

The proof of the decomposition of $TB=B$ for $B$ unbounded and admissible still follows the same idea, with the slight modification that $TB$ is seen as a singular and bounded part. Then, one adjusts the behaviour of $K_n$ by hand by letting $K_n\sim c+k_n$ where $c$ is independent of $n$. If the Riesz basis is quadratically close to the eigenfunctions, then one obtains,
\[
\sum_n b_n k_n \tilde{T}\varphi_n = \sum_n b_n (\varphi_n - \tilde{T}\varphi_n)
\]
and the right-hand side is bounded in $H$ using the quadratically close argument and the boundedness of the sequence $b_n$ in $\ell^\infty$ (provided, roughly speaking, by the admissibility of $B$).
Without the quadratically close property this direct argument fails. However, we prove here that even if the Riesz basis is not quadratically close to the eigenfunctions, we are still able to reach the same final conclusion by a close inspection of the left-hand side. Step 3 and 4 are obtained following standard techniques.

\subsection{Related works on the backstepping with a Fredholm transform}

There are essentially two type of systems in the literature for which rapid stabilization was achieved through the backstepping method with a Fredholm transformation : either the operator $A$ is of first order ($\alpha=1$) or of second order or higher ($\alpha\geq 2$). We have so far excluded from our discussion the case $\alpha=1$ as it seems to be a very specific case with techniques of its own. Indeed, the rapid stabilization for hyperbolic systems was established in \cite{CoronHuOlive16, MR4160602} through direct methods or by identifying the isomorphism applied to the eigenbasis leading to the Riesz basis \cite{coron:hal-03161523, Zhang-finite, ZhangRapidStab}. The other results found in the literature were concerned with operators such that $\alpha \geq 2$, and in these cases the Riesz basis properties was proved through the quadratically close criterion, thanks to the sufficient growth of the eigenvalues. Following the steps described in the previous section, the rapid stabilization was obtained for the linearized bilinear Schr\"odinger equation \cite{CGM}, the KdV equation \cite{CoronLu14}, the Kuramoto-Sivashinksi equation \cite{CoronLu15}, a degenerate parabolic operator \cite{GLM} and finally the heat equation for which the backstepping is proved in sharp spaces \cite{Gagnon-Hayat-Xiang-Zhang}. The variety of the PDEs for which this methodology can be applied tends to show that there exists an abstract theory for operators of order $\alpha >1$. Theorem \ref{thalphaintro} demonstrates this fact for skew-adjoint operators for $\alpha > 1$. This abstract setting could allow to lift some difficult questions raised when trying to apply the backstepping with the Volterra transformation. One such difficult\textcolor{black}{y} is seen for instance for degenerate parabolic equations (\cite{GLM}), where the Fredholm transformation lead to the study of well-known spectral properties of the Sturm-Liouville equation, whereas the PDE on the kernel of the Volterra transformation amounts to describe the propagation of bicharacteristics from a boundary satisfying a degenerate equation, a notoriously difficult problem. 

Finally, we shall remark that there are many other useful stabilization techniques in the literature that may also apply to sbimilar systems, for instance  the damping stabilization of waves \cite{zbMATH06919606, Bardos-Lebeau-Rauch, KriegerXiang2020}, the multiplier methods \cite{MR1359765}, Riccati theory \cite{Barbu-book, Breiten-Kunisch-Rodriques,  Lasiecka-Triggiani-book}, Gramian method \cite{Urquiza}, equivalence between observability and stabilizability \cite{Trelat-Wang-Xu}, quantitative finite time stabilization \cite{Xiang-heat-2020, Xiang-NS-2020},  various Lyapunov approaches \cite{Coron-Trelat-2004, SaintVenantPI, Hayat-Shang-2019, Hayat-Shang-2021} and among others.

\subsection{Related works on controllability and stabilization of the water waves system}
\vphantom{water-waves}

 A considerable amount of literature exists on the control of fluids. However, few works address the controllability and stabilization of the water waves. 
 Recently, this subject has drawn more and more attention. The controllability properties of these systems was first investigated in \cite{Reid-linearww} using the moment method, and the controllability assumption was sharpened to quasi-linear systems in \cite{Alazard-jems}, where the control is localized in the domain. 
 A recent work \cite{zbMATH07178296} investigated the water waves in 3-D, highlighting the need for the  geometric control condition for the controllability (the 2-D case satisfying automatically the geometric control condition as the control problem reduces to a 1-D equation).
 \\  Concerning stabilization, despite fruitful stabilization results were obtained in the literature for  fluids,  only few  result seems to exist regarding the water waves.  We may refer to the asymptotic stabilization and   exponential stabilization  results of water waves systems \cite{zbMATH06919606, Alazard-2018} that are based on external  ``damping" forces and ``observability" of the closed-loop systems. 
 Alternatively, stabilizability properties of  linearized water waves systems with controls acting on the boundary have been recently studied in \cite{Su-2021, Su-Tucsnak-Weiss}. Our contribution, as a direct consequence of the Fredholm backstepping transformation, is to obtain a rapid stabilization result, that is exponential stability with arbitrarily decay rate. \\   

\subsection{Outline of the paper}

The paper is divided in the following way. First, the main results are stated precisely in Section \ref{sec:mainresults} as well as their possible extensions, and the precise strategy of the proof is presented in Section \ref{sec:strategyandoutline}. {\color{black}Then, {the proof} is further divided in two parts. The first part deals with the proof of the main theorems in the abstract settings.}
In particular, Section \ref{sec:proof} begins with the statement of technical estimates used throughout the article as well as the proof of the Riesz basis (Step 1 of Section \ref{sec:related-method}) using the compactness/duality argument. 
In Section \ref{sec:proof2} we prove that the uniqueness condition $TB=B$ holds in $\Hdiv^{-1/2-\varepsilon}, \varepsilon \in (0,1/2)$ (Step 2) and define properly the feedback law $K$, the isomorphism $T$ (Step 3) as well as the operator equality \eqref{eq:introeqT}.
The well-posedness and rapid stabilization of the closed-loop system \eqref{closedloopsystem} is proved (Step 4) in Section \ref{sec-wellposed} and in turn proves that the sharp spaces for which the backstepping transform is established coincide with the sharp space for the well-posedness of the closed-loop system. 
{\color{black}The second part of the paper proves the rapid stabilization of the linearized water waves. The derivation of the {system} is first detailed, and then the abstract setting is shown to fit the functional framework of the linearized water waves, allowing to apply the main abstract results of this paper.} Finally, Appendix \ref{sec:Rieszdef} recalls the basic definitions for Riesz basis, Appendix \ref{sec:controllability_proof} proves the exact controllability {\color{black}of a class of skew-adjoint operators.} Appendix \ref{sec:beta} is devoted to the extension from Assumption \ref{asump1} to Assumption \ref{asump2}, Appendix \ref{sec:appendixc}--\ref{isomorph} are dedicated to technical proofs on the Fredholm transform for the backstepping method already existing in the literature.


 
\subsection{Notations and spaces}
\label{sec:notations}

{\color{black} Let $H$ be a Hilbert space and $A : D(A) \rightarrow H$ be an unbounded operator. 
Recall the definition of the domain of $A$:
\[D(A)=\left\{f\in H, \quad Af \in H\right\}.\]
We will assume throughout this paper that $A$ is densely defined, and that it is
skew-adjoint so that its eigenvalues are imaginary, and eigenvectors associated to distinct eigenvalues are orthogonal. {\color{black} See \cite{Brezis2011, CoronBook} for these classical functional analysis definitions.
}

\subsubsection{Simple eigenvalues case}
\label{subsubsec-Simple-eigenv-case}
We first consider the case that all eigenvalues $\lambda_n$ {\color{black} of the operator $A$} are simple. {\color{black}Thanks to the fact that $A$ is skew-adjoint},
we denote $(\varphi_n)_n$ an orthonormal basis of eigenvectors {\color{black} such that, 
\begin{equation*}
    A \varphi_n= \lambda_n \varphi_n, \quad \|\varphi_n\|_H= 1, \forall n\in \mathbb{N}^\ast.
\end{equation*}
}
We make the following crucial assumption on the growth of the eigenvalues:
 \begin{equation}
\label{hyp:1}
 \mathcal{P}(\alpha):
 \begin{array}{c}
\textrm{There exists} \ \alpha>1 \ \textrm{such that} \\
    |\lambda_n|{\color{black}+1}\sim n^{\alpha}, \quad n\in \N^*, \\
  |n_1-n_2|n_1^{\alpha-1} \lesssim |\lambda_{n_1}-\lambda_{n_2}|, \; \forall n_1,n_2 \in \N^*.
\end{array}
\end{equation}
It follows from this assumption that $A$ has compact resolvent. As a consequence, it has a bounded resolvent and is thus closed \cite{Brezis2011}. As it is skew-adjoint, it is dissipative on $D(A)$, and its adjoint as well. Then, by the Lumer-Phillips theorem \cite{PazyBook}, $A$ generates a strongly continuous semigroup on $H$.}
{\color{black}Now, for $s\in \R$ we can define the spaces $\Hdiv^{s}$ as, 
\begin{equation}\label{simple-H-i-s}
        \Hdiv^{s} := \{a = \sum\limits_{n\in\mathbb{N}^*} a_{n}\varphi_{n} 
    \;:\;\sum\limits_{n\in\mathbb{N}^*} |n|^{2s}|a_{n}|^{2}<+\infty\},
\end{equation}
endowed with the norms,
\[
\| a \|^2_{\Hdiv^s} :=  \sum\limits_{n\in\mathbb{N}^*} |n|^{2s}|a_{n}|^{2}<+\infty.
\]
Finally, the inner product $\langle\cdot, \cdot \rangle_{\Hdiv^s}$ is well-defined for any, $$f= \sum_{n\in\mathbb{N}^{*}} f_{n} \varphi_n, \; g= \sum_{n\in\mathbb{N}^{*}} g_{n} \varphi_n \in\Hdiv^{s},$$
it is given by, 
\begin{equation*}
\langle f, g\rangle_{\Hdiv^s}= \sum_{n\in \N^*} (n^s f_{n}) (n^s \overline{g_{n}}).
\end{equation*}

With this definition the spaces $\Hdiv^{s}$ acts like a Sobolev space but generated by $(\varphi_{n})_{n\in\mathbb{N}}$ instead of the classical basis of sin and/or cosine functions. Note that $\Hdiv^{0}=H$ \textcolor{black}{since $(\varphi_{n})_{n\in\mathbb{N}^{*}}$ is an orthonormal basis of $H$}. For usual cases where $A= i|D_{x}|^{\alpha}$ 
(such as Schrödinger equation or 
the water waves equation for instance) and the geometry considered is the torus, we can simply choose $H=L^{2}(\mathbb{T})$ and then $\Hdiv^{s}=H^{s}(\mathbb{T})$ where $H^{s}(\mathbb{T})$ is the classical Sobolev space on the torus.

\textcolor{black}{In general, let us point out the crucial embedding property, akin to Sobolev embeddings: for any $s\in\mathbb{R}$ and any $\varepsilon>0$, $\Hdiv^{s+\varepsilon}$ is compactly embedded in $\Hdiv^{s}$, each space being endowed with its corresponding norm as defined above.
}  Indeed, let us define the (finite-rank) projections and the continuous injection:
\begin{align*}P_N \, & : \ f=\sum_{n\in \N} f_n \varphi_n \in H \ \mapsto \ \sum_{n=0}^N f_n \varphi_n, \\ 
\iota \, & : \Hdiv^{s+\varepsilon}\rightarrow \Hdiv^s. 
\end{align*}
Let $f=\sum_{n\in \N} f_n \varphi_n \in \Hdiv^{s+\varepsilon}$, then
\[\|\iota f-P_N f\|^2_{\Hdiv^s}=\sum_{n\geq N+1} n^{2s} |f_n|^2=\sum_{n\geq N+1} n^{-2\varepsilon} n^{2s+2\varepsilon} |f_n|^2\leq N^{-2\varepsilon}\| f\|^2_{\Hdiv^{s+\varepsilon}}.\]
Hence, $\|\iota - P_N\|_{L(\Hdiv^{s+\varepsilon}, \Hdiv^s)}\leq N^{-\varepsilon}\xrightarrow[N\to \infty]{}0$. Hence, the injection $\iota$ is the uniform limit of a sequence of finite-rank operators. By \cite[Corollary 6.2]{Brezis2011}, it is thus compact.}

{\color{black}Moreover, in the general setting, given the property $P(\alpha)$ given by \textcolor{black}{\eqref{hyp:1}}, \ 
\[
D(A)=\Hdiv^\alpha.
\]

Notice that when $s\in \Z$, we have,
\[D(A^s)=\Hdiv^{\alpha s},\]
however for $s\in \R \setminus \Z$, $A^s$ is not \textcolor{black}{\emph{a priori}} well-defined as $A$ is not self-adjoint.}

\subsubsection{Bounded multiplicities case}

\textcolor{black}{In this subsection, we extend these notations to the case where $A$ has eigenvalues with multiplicities higher than $1$. {\color{black}This is, for example, the case for Laplace operator on the  one-dimensional torus, but these notations hold in the general setting.}  Let us assume that the eigenvalues of $A$ have at most multiplicity $m\in\mathbb{N}^{*}$.}
{\color{black}
Then, there exists a decomposition of $H$ in a finite direct sum of subspaces $H_i$, \textcolor{black}{$i\in\{1,...,m\}$}, {\color{black}invariant under the action of the unitary group $e^{t A}$}, on which $A$ has a spectral decomposition with simple eigenvalues (as in the previous subsection).
For more details on how to explicitly construct this decomposition, see Appendix \ref{sec:decomp}. 

Then, on each $H_i$, \textcolor{black}{there exists a basis of eigenvectors of $A$ that we denote $(\varphi_{n}^{i})_{\mathcal{I}_{i}}$ and} we can define the nested spaces $\Hdiv^s_i$ as in \eqref{simple-H-i-s} \textcolor{black}{with $(\varphi_{n}^{i})_{\mathcal{I}_{i}}$ instead of $(\varphi_{n})_{n\in\mathbb{N}}$}.
{\color{black} More precisely, for $i\in \{1,...,m\}$,
for $s\in \R$ we  define the spaces $\Hdiv_i^{s}$ as, 
\begin{equation}\label{simple-H-i-s-2}
        \Hdiv_i^{s} := \{a = \sum\limits_{n\in \textcolor{black}{\mathcal{I}_{i}}} a_{n}\varphi^i_{n} 
    \;:\;\sum\limits_{n\in\textcolor{black}{\mathcal{I}_{i}}} |n|^{2s}|a_{n}|^{2}<+\infty\},
\end{equation}
endowed with the norms, 
\[
\| a \|^2_{\Hdiv_i^s} :=  \sum\limits_{n\in\textcolor{black}{\mathcal{I}_{i}}} |n|^{2s}|a_{n}|^{2}<+\infty.
\]
}
For any real number $s$, we define, 
\begin{equation}\label{simple-eigenspace-decomposition}\Hdiv^s:= \Hdiv_1^s\oplus ... \oplus \Hdiv_m^s.
\end{equation}
{\color{black}Moreover, we can also define, for any $\vec s= (s_1,..., s_m)\in \mathbb{R}^m$, the space, 
\begin{equation*}
    \Hdiv^{\vec s}:= \Hdiv_1^{s_1}\oplus ... \oplus \Hdiv_m^{s_m}.
\end{equation*}
}

\textcolor{black}{Without loss of generality we can assume that there exists $m_{l}\in\{1,...,m\}$ such that
$\Hdiv_{i}$ is infinite dimensional and $\mathcal{I}_{i}=\mathbb{N}^{*}$ if $i\in\{1,...,m_{l}\}$, and $\Hdiv_{i}$ is finite dimensional if $i\in\{m_{l}+1,...,m\}$ with $\mathcal{I}_{i}=\{1,...N_{i}\}$.}

Finally, we require that on each $\Hdiv_i$, \textcolor{black}{with $i\in\{1,...,m_{l}\}$}, the ordered eigenvalues of $A$ satisfy the property $\mathcal{P}(\alpha_i)$ for some $\alpha_i>1$. 
\textcolor{black}{
More precisely, 
\begin{hyp}
\label{hyp:alpha}
For any $i\in\{1,...,m_{l}\}$, there exists $\alpha_{i}> {\color{black}1}$ such that $P(\alpha_{i})$ holds for $(\lambda_{n}^{i})_{n\in\mathbb{N}^{*}}$. 
\end{hyp}
}
}

{\color{black}
\begin{remark}
The definition of the space $\Hdiv_i^s$, with the renumbering of the eigenvalues, is made so that it behaves like classical Sobolev spaces and so that it is easier to deal with the coefficients of a function in some exotic cases. {Indeed, assume that $\lambda_{n} = n, n \in \N$ and that all the eigenvalues are simple, except when $n=m^2, m\in \N$, in which case they are double.} Then, by the definition of $\Hdiv_2$, {a function} $a$ belonging to $\Hdiv_2$ is such that 
\[
a=\sum_{n\in \N} a_n \varphi_n^2,
\]
instead of,
\[
a=\sum_{\substack{n\in \N \\ n=m^2, m\in \N}} a_n \varphi_n^2,
\]
for a more classical definition of $\Hdiv_2$ without renumbering. This way, our computations are uniform with respect of the multiplicity of the eigenvalues. 
\end{remark}

\begin{remark}\label{remark:15}
Notice that the decomposition $\Hdiv^s$ in direct sum is not unique. It may be written in multiple different ways, but from the functional setting defined here, the computations are uniform with respect to the different possible choice of definition of the spaces $\Hdiv_i^s$. Moreover, notice that the finite-dimensional spaces $\Hdiv_i^s$ for $i\in \{m_l+1, \ldots, m\}$ may be chosen arbitrarily, as long as they remain of finite dimension. Since the existence of the pair $(T_i,K_i)$ for these finite dimensional spaces are tackled with a finite dimensional result, our main results may be reinterpreted as a proof of convergence of the infinite-dimensional reminder. This could be particularly relevant for efficient finite-dimensional numerical methods. 

\end{remark}
}

\section{Main results}
\label{sec:mainresults}
{\color{black}
Let us first introduce the assumptions leading to the main results. Notice first that under the assumption of Section \ref{sec:notations}, $\overline{D(A)}^{\|.\|_{\Hdiv}}=\Hdiv$ and $A$ is \textcolor{black}{densely defined}. Therefore,  the Lumer-Philips Theorem implies that $A$ is the infinitesimal generator of the \textcolor{black}{semi}group $e^{At} : \Hdiv \rightarrow \Hdiv$. Since $A$ can be continuously extended as an operator from $\Hdiv^s$ to $\Hdiv^{s-\alpha}$, $s\in \R$, $A$ is the infinitesimal generator of the \textcolor{black}{semi}group $e^{At} : \Hdiv^s \rightarrow \Hdiv^s$ with $s\in \R$. 

{\color{black}Regarding the control operator $B$, recall that we consider the case of a finite number of scalar controls, with which system \eqref{eq:sys0} can be controllable under suitable assumptions on $A$ and $B$. In our study, we find that when the eigenvalues are simple, one control suffices. In general, given a decomposition of $H$ in subspaces on which $A$ has simple eigenvalues, using one scalar control for each subspace suffices. The smallest required number of scalar controls required is the smallest number of subspaces in such a decomposition, which corresponds to the number $m$ of subspaces in the decomposition \eqref{simple-eigenspace-decomposition}, that is, the highest multiplicity found in the spectrum of $A$}. 

We now consider\footnote{{\color{black}Throughout the paper, we shall denote ${\color{black}B_i}\in \Hdiv^{\textcolor{black}{s}}_i$ as a shorthand for ${\color{black}B_i: \mathbb{C} }\rightarrow \Hdiv^{\textcolor{black}{s}}_i$.}} the control operator $B=(B_1,\ldots , B_m)$ such that, for any $B_{i}\in \Hdiv^{-\textcolor{black}{\alpha_i/2}}_{i}$, for any $i\in\{1,...,m\}$ and 
\begin{equation}\label{eq:intro:bni}
\begin{split}
    B_{i} &= 
\sum\limits_{n=1}^{N_{i}}b_{n}^{i}\varphi_{n}^{i},\;\text{ in } \Hdiv^{-{\color{black}\alpha_i}/2}_{i},\;\text{ for }i\in\{m_{l}+1,...,m\},\\
    B_{i} &= 
\sum\limits_{n\in\mathbb{N}^{*}}b_{n}^{i}\varphi_{n}^{i},\;\text{ in } \Hdiv^{-{\color{black}\alpha_i}/2}_{i},\;\text{ for }i\in\{1,...,m_{l}\}.
\end{split}
\end{equation}
and we formulate the following assumption:}

\begin{hyp}\label{asump1}
{\color{black}The sequences $(b_n^i)_{i, n}$ introduced in \eqref{eq:intro:bni} satisfy the following inequalities:}
\textcolor{black}{
    \begin{equation}
\label{condB}
\begin{split}
&b_{n}^{i}\neq 0,\text{ for }n\in\{1,...,N_{i}\}, \;i \in\{m_{l}+1,...,m\},\\
c_{1}< |b^{i}_{n}| < c_{2},\;\;\text{ for }&i\in\{1,...,m_{l}\},\;n\in\mathbb{N}^{*},\text{ for some constants $c_1, c_2>0$.}
\end{split}
\end{equation}
}
\end{hyp}

\begin{remark}
{\color{black}Assumption \ref{asump1} \textcolor{black}{can be seen as an regularity assumption linked to admissibility and exact controllability. It is natural in the sense} that} {\color{black} the lower bound in Assumption \ref{asump1} is necessary and sufficient to get the exact controllability in $H$ (with $L^{2}(0,T)$ controls) (see the proof of Proposition \ref{prop:contfrollabilitylinear}), while the upper bound on $(b_n)_n$ is \textcolor{black}{necessary to the admissibility of the control operator (i.e. the existence of a mild solution in $C^{0}([0,T];H)$ to the system)} as shown in \cite{zbMATH06200909} (see the specific condition \cite[Section 2]{zbMATH06200909}).}
\end{remark}

Notice first that, under Assumption \ref{asump1}, the control system is well-posed as the - potentially unbounded in $\Hdiv^{s}$ - control operator \textcolor{black}{$B: \textcolor{black}{\mathbb{C}^{m}} \rightarrow \Hdiv^{-\alpha/2}$} is admissible in $\Hdiv^s$ for any $s \leq 0$ \textcolor{black}{(and in particular in $H$)}.
\begin{lem}\label{lem:admissible}
Let $A$ be a skew-adjoint operator such that Assumption \ref{hyp:alpha} holds. Assume the control operator \textcolor{black}{$B: \textcolor{black}{\mathbb{C}^{m}} \rightarrow \Hdiv^{-\alpha/2}$} satisfies Assumption \ref{asump1}. Then $B$ is admissible in $H$ (and therefore in $\Hdiv^s$ for any $s \leq 0$) that is, for any $T>0$,
\begin{equation}\label{admissible}
{\color{black}\left\|\int_0^T e^{A(T-t)}Bw(t) dt\right\|_{H}\leq C \|w\|_{L^2(0, T)}, \quad \forall w\in {\color{black}L^2((0,T);\textcolor{black}{\mathbb{C}^{m}})}}.
\end{equation}
Conversely, if the operator $B$ is admissible, then there exists $c>0$ such that $|b_n^i|<c$.
\end{lem}
The proof of Lemma \ref{lem:admissible} is found in Appendix \ref{sec:controllability_proof}.
\begin{remark}
From the proof, Lemma \ref{lem:admissible} is easily extended for controls $w\in H^\sigma((0,T);\textcolor{black}{\mathbb{C}^{m}}), \sigma \in \R$, and more general control operator $B$ satisfying Assumption \ref{asump2} below. But for the sake of conciseness, we restrict ourselves to controls in $L^2((0,T);\textcolor{black}{\mathbb{C}^{m}})$ and with control operators satisfying Assumption \ref{asump1}. 
\end{remark}
From \cite[Proposition 4.2.5]{TucsnakWeissBook} and  Lemma \ref{lem:admissible}, for any $u_0\in H$ and $w\in L^2((0,T);\textcolor{black}{\mathbb{C}^{m}})$, the problem
\begin{equation}\label{eq:controleintro}
\begin{cases}
\d_t u(t)=Au(t)+Bw(t), \quad t\in (0,T),\\
u(0)=u_0,
\end{cases}
\end{equation}
has a unique solution $u \in C([0,T];H)$.  
This solution is given by the Duhamel formula, 
\[
u(t)=e^{At}u_0+\int_0^t e^{A(t-s)}Bw(s) ds.
\]

This framework allows us to deduce the exact small-time controllability by the moments method using Haraux's refined version of Ingham's inequality. 
\begin{prop}\label{prop:contfrollabilitylinear}
Let $T>0$ \textcolor{black}{and define the control system
\begin{equation}
\label{eq:sysalpha}
    \partial_{t}u(t) =  Au(t) + Bw(t).
\end{equation}
}
Assume that {\color{black} Assumption \ref{hyp:alpha} and Assumption \ref{asump1} hold}. 
\textcolor{black}{Then, there exists some positive constant $C>0$ such that} for any $(u_0, u_f)\in (\textcolor{black}{H})^2$ 
there exists a control $w\in L^2((0,T);\textcolor{black}{\mathbb{C}^{m}} )$ satisfying {\color{black}
\begin{equation*}
    \|w\|_{L^2(0, T)}\leq C \left(\|u_0\|_{H}+  \|u_f\|_{H}\right),
\end{equation*}}
such that the unique solution of \eqref{eq:sysalpha} with initial state $u_0$ satisfies $u(T)= u_f$ in $H$.
\end{prop}

The proof of Proposition \ref{prop:contfrollabilitylinear} is postponed to the Appendix \ref{sec:controllability_proof}.\\

\textcolor{black}{Our main result is the following:}
{\color{black}
\begin{thm}
\label{thalpha}
Let the spatial operator $A$ such that Assumption \ref{hyp:alpha} holds. Let $B=(B_{1},\ldots,B_{m})\in \Hdiv^{-\textcolor{black}{\alpha_{1}/2}}_{1}\times...\times \Hdiv^{-\textcolor{black}{\alpha_{m}/2}}_{m}$  such that Assumptions \ref{asump1} holds and set $\alpha:=\min_{i}\alpha_{i}$.
For any $\lambda>0$, there exists an explicit bounded linear operator $K\in \mathcal{L}(\Hdiv^{\textcolor{black}{\alpha_{1}/2}}_{1}\times...\times \Hdiv^{\textcolor{black}{\alpha_{m}/2}}_{m};\mathbb{C}^{m})$ and an isomorphism $T$ from $\Hdiv^{r}$ to itself for $r\in(1/2-\alpha,\alpha-1/2)$ that maps the system 
\begin{equation}\label{geneoperaclosed}
    \partial_{t}u=Au+BK(u),\; (t,x) \in \mathbb{R}_{+} \times \T,
\end{equation}
to the system 
\begin{equation*}
 \partial_{t}v=Av-\lambda v,\;  (t,x) \in \mathbb{R}_{+} \times \T. 
\end{equation*}
Consequently, the closed-loop system \eqref{geneoperaclosed} is exponentially stable in $\Hdiv^{r}$ for $r\in (1/2- \alpha, \alpha- 1/2)$ with decay rate $\lambda$. \textcolor{black}{In particular it is exponentially stable in $\Hdiv^{0}=H$.}
\end{thm}
}


{\color{black}In fact, we are able to prove a more general result, as the regularity Assumption \ref{asump1} can be \textcolor{black}{generalized} to the following: 
\begin{hyp}\label{asump2}
Let $\beta_i\in \mathbb{R}$.
Let the operator $B=(B_{1},\ldots,B_{m})$ be such that $B_{i}\in \Hdiv^{\beta_i-\frac{\alpha_i}{2}}_{i}$. Then, assume the following  condition,
    \begin{equation}
\label{condB2}
\begin{split}
&b_{n}^{i}\neq 0,\text{ for }n\in\{1,...,N_{i}\}, \;i \in\{m_{l}+1,...,m\},\\
&{\color{black}c_{1} n^{-\beta_i}\leq |b^{i}_{n}| \leq c_{2}n^{-\beta_i},\;\;\text{ for }i\in\{1,...,m_{l}\},\;n\in\mathbb{N}^{*},\text{ for some constants $c_1, c_2>0$.}}
\end{split}
\end{equation}
\end{hyp}
Following the proof of Proposition \ref{prop:contfrollabilitylinear} {\color{black} and an adaptation of Lemma \ref{lem:admissible} based on Assumption \ref{asump2},} we deduce from this hypothesis,
\begin{prop}\label{prop:moreregularcontfrollabilitylinear}
Let $T>0$ and assume that {\color{black} Assumption \ref{hyp:alpha} and Assumption \ref{asump2} hold}. For any $(u_0, u_f)\in (\Hdiv^{\vec r})^2$  with $\vec r= (\beta_1,..., \beta_m)$,
there exists a control $w\in L^2((0,T);\textcolor{black}{\mathbb{C}^{m}} )$ such that the unique solution of \eqref{eq:sysalpha} with initial state $u_0$ satisfies $u(T)= u_f$ in $(\Hdiv^{\vec r})^2$.
\end{prop}
}
{\color{black}
\begin{thm}
\label{thalpha-2}
Let the spatial operator $A$ such that Assumption \ref{hyp:alpha} holds. Let $B=(B_{1},\ldots,B_{m})\in \Hdiv^{\beta_1- \frac{\alpha_1}{2}}_{1}\times...\times \Hdiv^{\beta_m- \frac{\alpha_m}{2}}_{m}$  such that Assumption \ref{asump2} holds.
For any $\lambda>0$, there exists an explicit bounded linear operator $K\in \mathcal{L}(\Hdiv^{\beta_1+ \frac{\alpha_1}{2}}_{1}\times...\times \Hdiv^{\beta_m+ \frac{\alpha_m}{2}}_{m};\mathbb{C}^{m})$ and an isomorphism $T$ from $\Hdiv^{\vec r}$ to itself for any, 
\begin{gather*}
    \vec r= (\beta_1+ r_1,..., \beta_m+ r_m) \textrm{ such that } \\
    r_i\in \left(\frac{1}{2}- \alpha_i, \alpha_i- \frac{1}{2}\right), \forall i\in \{1,..., m_l\}, \\
    r_i\in \mathbb{R}, \forall i\in \{m_l+1,..., m\}.
\end{gather*}
 that maps the system, 
\begin{equation}\label{geneoperaclosed-2}
    \partial_{t}u=Au+BK(u),\; (t,x) \in \mathbb{R}_{+} \times \T,
\end{equation}
to the system, 
\begin{equation*}
 \partial_{t}v=Av-\lambda v,\;  (t,x) \in \mathbb{R}_{+} \times \T. 
\end{equation*}
Consequently, the closed-loop system \eqref{geneoperaclosed-2} is exponentially stable in $\Hdiv^{\vec r}$  with decay rate $\lambda$. In particular it is exponentially stable in $\Hdiv^{0}=H$ provided that, 
\begin{equation*}
    \beta_i\in \left(\frac{1}{2}- \alpha_i, \alpha_i- \frac{1}{2}\right), \forall i\in \{1,..., m_l\}.
\end{equation*}
\end{thm}
}

We shall remark here that the bounds are sharp in the sense that \textcolor{black}{if, for any $i\in\{1,...,m_{l}\}$, $r_{i}\geq \beta_{i} +\alpha-1/2$, then} the unbounded operator $A+ BK$ does not anymore generate a strongly continuous semigroup in \textcolor{black}{$\Hdiv^{\vec r}$} (see Section \ref{sec-wellposed}). 
We also underline that, while the isomorphism $T$ depends on the regularity of the state space \textcolor{black}{$\Hdiv^{\vec r}$}, the feedback law is, surprisingly, independent of \textcolor{black}{$\vec r$}. This independence was already noticed in \cite{Gagnon-Hayat-Xiang-Zhang}. To prove \textcolor{black}{Theorem \ref{thalpha-2},} we introduce \textcolor{black}{the compactness-duality method}
to free ourselves from the bound $\alpha = 3/2$.\\

\textcolor{black}{This result can be directly applied to the capillarity-gravity water waves system. Indeed, given that the operator $\mathscr{L}$ defined in \eqref{op-WW} is a Fourier multiplier defined on the torus, its eigenvectors are sin and cosine functions and $\Hdiv^{s} = H^{s}(\mathbb{T})$ is a classical Sobolev space on the torus. Moreover, given \eqref{op-WW} one can check that all the eigenvalues of this operator have multiplicity 2, except for the eigenvalue {\color{black}$\lambda_0=0$} that has multiplicity $1$. Hence $m=m_{l}=2$ and the operator satisfies Assumption \ref{hyp:alpha} with $\alpha=3/2$. This is shown in Appendix \ref{sec:appendixc}.}
\begin{cor}[\textcolor{black}{Water waves equations - Backstepping transform}]
\label{th1}
Let $B=(B_{1},B_{2})\in \Hdiv^{-3/4}_{1}\times\Hdiv^{-3/4}_{2}$  such that  \eqref{condB} holds.
Then, for any $\lambda>0$, there exists an explicit bounded linear operator $K\in \mathcal{L}(\Hdiv^{3/4}_{1}\times \Hdiv^{3/4}_{2};\mathbb{C}^{2})$ and an isomorphism $T$ from $H^{r}(\mathbb{T})$ to itself for any $r\in(-1,1)$ that maps the system, 
\begin{equation}\label{closedloopsystem}
\begin{array}{ll}
\partial_t u =\mathscr{L}u +BK(u), & (t,x) \in \mathbb{R}_{+} \times \T, 
\end{array} 
\end{equation}
to the system, 
\begin{equation}\label{eq:linearizedtarget}
    \begin{array}{ll}
\partial_t v =\mathscr{L}v -\lambda v , & (t,x) \in \mathbb{R}_{+} \times \T, 
\end{array} 
\end{equation}
where $\mathscr{L}$ is the linearized  water-wave operator given in \eqref{op-WW}.
\end{cor}
As direct consequence we can deduce the existence of an explicit control law for the rapid exponential stabilization of the system \eqref{eq:linearizedWW}.
\begin{cor}[\textcolor{black}{Water waves equations - exponential stability}]\label{coro:expostab}
For any $\lambda>0$, there exists an explicit feedback functional $K\in \mathcal{L}(\Hdiv^{3/4}_{1}\times \Hdiv^{3/4}_{2};\mathbb{C}^{2})$ such that for any $r\in (-1 , 1)$ and for any initial state $u(t)|_{t=0}= u_0\in H^r(\mathbb{T})$, the closed-loop system \eqref{closedloopsystem}  has a unique solution $u\in C^0([0, +\infty); H^r(\T))$. In addition, this unique solution decays exponentially with rate $\lambda$,
\begin{equation*}
\|u(t,\cdot)\|_{H^r}\lesssim  e^{-\lambda t}\|u_{0}\|_{H^r}, \; \quad \forall t\in (0, +\infty).
\end{equation*}
\end{cor}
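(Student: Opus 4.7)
The plan is to use Theorem \ref{th1} as a conjugation device that transfers the dynamics of the closed-loop system \eqref{closedloopsystem} onto those of the target system \eqref{eq:linearizedtarget}, which is trivially exponentially stable because $\mathscr{L}$ is skew-adjoint.

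First I would treat the target system. Since $\mathscr{L}=-i((g-\partial_x^2)G[0,h])^{1/2}$ is skew-adjoint on $L^2(\T;\C)$ with eigenvalues $\lambda_n\in i\R$ and eigenfunctions $\{\varphi_n^1,\varphi_n^2\}$ forming an orthonormal basis, it generates a unitary $C^0$-group $(e^{t\mathscr{L}})_{t\in\R}$ on every Sobolev space $H^r(\T)$, $r\in\R$, and consequently $\mathscr{L}-\lambda I$ generates the contraction semigroup $e^{-\lambda t}e^{t\mathscr{L}}$ with the sharp bound $\|e^{t(\mathscr{L}-\lambda I)}\|_{\mathcal{L}(H^r)}=e^{-\lambda t}$. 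In particular, for any $v_0\in H^r$ the function $v(t):=e^{-\lambda t}e^{t\mathscr{L}}v_0$ is the unique solution of \eqref{eq:linearizedtarget} in $C^0([0,\infty);H^r)$ and satisfies $\|v(t)\|_{H^r}=e^{-\lambda t}\|v_0\|_{H^r}$.

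Next I would transport this semigroup through the isomorphism $T:H^r\to H^r$ given by Theorem~\ref{th1}. Define
\begin{equation*}
S_r(t):=T^{-1}\,e^{t(\mathscr{L}-\lambda I)}\,T,\qquad t\geq 0,
\end{equation*}
which is a $C^0$-semigroup on $H^r$ for each $r\in(-1,1)$ since $T$ and $T^{-1}$ are bounded. For initial data $u_0\in H^r$, set $u(t):=S_r(t)u_0$. The intertwining identity $T(\mathscr{L}+BK)=(\mathscr{L}-\lambda I)T$ from Theorem~\ref{th1} shows, by differentiating $Tu(t)=e^{t(\mathscr{L}-\lambda I)}Tu_0$ in the weak/distributional sense, that $u$ solves \eqref{closedloopsystem} in the mild/semigroup sense (with the feedback term $BKu$ read in the dual space $H^{r-3/2}$ via the admissibility of $B$ and the continuity of $K$ on the trace space provided by Theorem~\ref{th1}). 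Uniqueness is automatic: any other $C^0([0,\infty);H^r)$ solution $\tilde u$ of the closed-loop system would, after application of $T$, yield a solution of \eqref{eq:linearizedtarget} with the same initial data, forcing $T\tilde u=Tu$ and hence $\tilde u=u$.

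The exponential decay then follows immediately from the conjugation and the bounds on $T,T^{-1}$:
\begin{equation*}
\|u(t)\|_{H^r}
=\|T^{-1}e^{t(\mathscr{L}-\lambda I)}Tu_0\|_{H^r}
\leq \|T^{-1}\|_{\mathcal{L}(H^r)}\,e^{-\lambda t}\,\|T\|_{\mathcal{L}(H^r)}\,\|u_0\|_{H^r},
\end{equation*}
which is the announced estimate. The constant implicit in $\lesssim$ depends on $r$ and $\lambda$ through the operator norms of $T^{\pm1}$ on $H^r$, both of which are finite by Theorem~\ref{th1}.

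The only genuinely non-trivial point, and therefore the one I would write out carefully, is the passage from the semigroup definition $u(t)=S_r(t)u_0$ to the statement that $u$ solves \eqref{closedloopsystem} in a sense compatible with the unbounded perturbation $BK$: when $r<3/4$ the product $BKu$ only lives in $H^{r-3/2}$ and one must justify that $\mathscr{L}+BK$ is indeed the generator of $S_r(t)$ on the appropriate domain. This is handled by relying on the well-posedness machinery announced in Section~\ref{sec-wellposed} of the paper, where the sharp spaces for which $T$ is an isomorphism are shown to coincide with the sharp spaces for the well-posedness of \eqref{closedloopsystem}; given that statement, the semigroup $S_r(t)$ must agree with the one produced there, and the corollary follows.
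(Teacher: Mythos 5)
Your argument has a genuine gap, and it sits exactly where you flag it at the end. The corollary is not only the decay estimate: it asserts existence and uniqueness of a solution of the closed-loop system \eqref{closedloopsystem} in $C^0([0,+\infty);H^r)$, i.e.\ that $A+BK$ (with $A=\mathscr{L}$) generates a $C^0$-semigroup on $H^r$ and that your transported trajectory $u(t)=T^{-1}e^{t(A-\lambda)}Tu_0$ is that semigroup's orbit. Defining $S_r(t):=T^{-1}e^{t(A-\lambda)}T$ gives a $C^0$-semigroup, but identifying its generator with $A+BK$ on the natural domain $D_r(A+BK)=\{f\in H^r:\ (A+BK)f\in H^r\}$ is precisely the hard content of Section \ref{sec-wellposed}: one must show $D_r(A+BK)=T^{-1}(H^{r+3/2})$ (Lemma \ref{lem:defdomaTAB} for $r=0$) and, crucially, extend the operator equality \eqref{op-eq}, which is only known in $\mathcal{L}(H^{3/4+s};H^{-3/4+s})$ for $s\in(-1/4,1/4)$, to hold in $H^r$ on that domain (this uses $TB=B$ and the definition of the domain; note $D(A+BK)$ contains no eigenfunction $\varphi_n$ and is not a standard Sobolev space, so none of this is automatic). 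Your uniqueness argument is circular for the same reason: to say that any $C^0([0,\infty);H^r)$ solution $\tilde u$ of \eqref{closedloopsystem} is mapped by $T$ to a solution of \eqref{eq:linearizedtarget}, you already need the intertwining to hold along the trajectory in the correct (generator/domain) sense, which is what is being proved. Writing ``this is handled by the well-posedness machinery announced in Section \ref{sec-wellposed}'' assumes the main ingredient of the corollary rather than proving it.

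For comparison, the paper does not conjugate the semigroup directly; it proves generation by Lumer--Phillips in the equivalent norm $\|y\|_{H^r_d}=\|Ty\|_{H^r}$: extension of the operator equality to $D(A+BK)$, the characterization $D(A+BK)=T^{-1}(H^{r+3/2})$, density, closedness, and dissipativity of both $A+BK$ and $(A+BK)^*$, with a separate (easier) treatment for $r\in(-1,-1/2)$ where the domain is simply $H^{r+3/2}$ and \eqref{op-eq} applies directly; the decay then follows from the dissipativity computation, which is the rigorous counterpart of your $\|T\|\,\|T^{-1}\|e^{-\lambda t}$ bound. Your conjugation picture is in fact acknowledged in the paper only as a weaker notion of stability (final remark of Section \ref{sec-wellposed}), and even there it is invoked after semigroup generation has been established on some $H^{r_0}$. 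To repair your proof you would need to supply the domain identification and the extension of \eqref{op-eq} yourself, at which point you are essentially reproducing the paper's argument.
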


As stated in the introduction, the system \eqref{eq:linearizedWW} is all the more interesting as it represents the critical case $\alpha=3/2$ where the usual method fails.\\





\textcolor{black}{\begin{remark}[Rapid stabilization in $H^{s}(\mathbb{T})$ instead of $L^{2}(\mathbb{T})$]
In Corollaries \ref{th1} and \ref{coro:expostab} we chose $H=L^{2}(\mathbb{T})$, thus the feedback law stabilizes the system in $H^r$ for every $r\in (-1, 1)$. However, if  operators $B_1$ and $B_2$ have different regularity and satisfy instead \eqref{condB2} with $\beta_1= \beta_2= s$, then,  thanks to  Theorem \ref{thalpha-2},  we can construct feedback laws such that the closed-loop system is stable in $H^{r+s}$ for every $r\in (-1, 1)$.
\end{remark}
Other remarks about mass-like conservation condition and the stabilization of the water-wave equations on a bounded domain can be found in Section \ref{sec:water-waves} dedicated to the case of the water-wave equations.
}

\section{Strategy and outline}
\label{sec:strategyandoutline}
 In this section, we briefly comment on the strategy to prove Theorem \ref{thalpha-2} which is the task of the next two sections, while the proof of Corollary \ref{coro:expostab} concerning the well-posedness and stability of the closed-loop system will be discussed later on in Section  \ref{sec-wellposed}.  Given the decomposition {\color{black} of the space $\Hdiv^s$  along sub-spaces  $\Hdiv^s_i$ with $i\in \{1,...,m\}$}, showing Theorem \ref{thalpha-2} amounts to proving the following proposition. 
\begin{prop}
\label{prop:th}
Let $i\in \{1,\textcolor{black}{...,m}\}$.
Let $B_{i}\in \Hdiv_{i}^{\beta_i- \frac{\alpha_i}{2}}$ satisfying \eqref{condB2}.
For any $\lambda>0$, there exists a bounded linear operator $K_{i}\in \mathcal{L}(\Hdiv^{\beta_i+\frac{\alpha_i}{2}}_{i};\mathbb{C})$ and an isomorphism $T_{i}$ {\color{black} from $\textcolor{black}{\Hdiv_i^{r}}$ to itself, for $r\in\textcolor{black}{(\beta_i+ 1/2-\alpha_i,\beta_i+ \alpha_i-1/2)}$, }which maps the system, 
\begin{equation}\label{sysdepart1}
\begin{array}{ll}
\partial_t u =\textcolor{black}{A}u +B_{i}K_{i}(u),\;\;u\in \Hdiv^{r}_{i},
\end{array}
\end{equation}
to the system, 
\begin{equation}\label{sysarrive1}
    \begin{array}{ll}
\partial_t v =\textcolor{black}{A}v -\lambda v,\;\;v\in \Hdiv^{r}_{i}.
\end{array} 
\end{equation}
\end{prop}
Indeed, if Proposition \ref{prop:th} holds, then the {\color{black}isomorphism-feedback} pair $(T,K)$ of Theorem \ref{thalpha-2} is simply $T = T_{1}\oplus\textcolor{black}{...\oplus T_{m}}$ and $K=(K_{1},\textcolor{black}{...,K_{m}})$. 
\begin{remark}
\textcolor{black}{In the following we will select $i\in\{1,...m_{l}\}$ and work with this fixed $i$. The case of $i\in\{m_{l}+1,...,m\}$ corresponding to a finite dimensional space $\Hdiv_{i}$ can be tackled using a classical finite-dimensional backstepping described below (see also {\color{black}\cite{Coron_ICIAM15}}) and will not be described here.}
\textcolor{black}{For the reader's convenience, we will also drop from now on the index $\textcolor{black}{i}$ and denote again $\Hdiv^{r}$, $\alpha$, $\beta$, $H$, $K$, $T$, $\varphi_{n}$ instead of $\Hdiv^{r}_{\textcolor{black}{i}}$, $\alpha_{i}$, $\beta_{i}$, $H_{i}$, $K_{\textcolor{black}{i}}$, $T_{\textcolor{black}{i}}$, $\varphi_{n}^{\textcolor{black}{i}}$ respectively. Moreover,  it suffices to consider the case that $\beta= 0$ (see Appendix \ref{sec:beta} for an extension to the case $\beta\neq 0$)}
  \end{remark}

Before proving Proposition \ref{prop:th}, let us make some formal observations to get an intuition of the problem. To map the system \eqref{sysdepart1} onto \eqref{sysarrive1}, what we would like to do is to obtain (formally) the following operator equality,
\begin{equation}
\label{op-eq0}
T(A+BK) = (A-\lambda \textcolor{black}{I})T.
\end{equation}
As noted in \cite{CGM, CoronLu15, Gagnon-Hayat-Xiang-Zhang}, a good approach to this aim is to add a condition on $TB$, and to require instead the two following operator equalities,
\begin{equation}
\label{op-eq00}
\begin{split}
& TA+BK = (A-\lambda \textcolor{black}{I})T,\\
& TB = B,
\end{split}
\end{equation}
in a certain sense to be specified. Note that \eqref{op-eq00} implies formally \eqref{op-eq0} and requiring \eqref{op-eq00} instead of \eqref{op-eq0} allows to deal with operator equations that are linear with $(T,K)$. It also usually ensures the uniqueness of solution (see for instance \cite{Gagnon-Hayat-Xiang-Zhang}). In finite dimension, \eqref{op-eq0} corresponds to an equivalent formulation of the pole-shifting theorem \cite[Section 2]{coron:hal-03161523}. Applying the first operator equality to the orthonormal basis of eigenvectors $(\varphi_{n})_{n\in\mathbb{N}^{*}}$ gives,
\begin{equation}
\label{op-eq-cons1}
\lambda_{n}(T\varphi_{n})+ BK(\varphi_{n}) = (A-\lambda I)(T\varphi_{n}),
\end{equation}
where we used the fact that $\varphi_{n}$ is an eigenvector of $A$ \textcolor{black}{and where $\lambda_{n}$ is the eigenvalue associated to $\varphi_{n}$}. Observe that \eqref{op-eq-cons1} is a differential equation on $(T\varphi_{n})$. Projecting now on a vector $\varphi_{p}$ and recalling that $b_{p}=\langle B,\varphi_{p}\rangle$, this becomes,
\begin{equation}
\begin{split}
\label{op-eq-cons2}
(\lambda_{n}+\lambda)\langle (T\varphi_{n}),\varphi_{p}\rangle + \langle B,\varphi_{p}\rangle K(\varphi_{n}) &= \langle A(T\varphi_{n}),\varphi_{p}\rangle,\\
&=\langle (T\varphi_{n}),A^{*}\varphi_{p}\rangle=\lambda_{p}\langle (T\varphi_{n}),\varphi_{p}\rangle.
\end{split}
\end{equation}
This gives the following formal expression,
\begin{equation}
\label{eq:defT0}
   T\varphi_{n} = \sum\limits_{p\in\mathbb{N}^{*}} \langle (T\varphi_{n}),\varphi_{p}\rangle\varphi_{p}=(-K(\varphi_{n}))\sum\limits_{p\in\mathbb{N}^{*}}\frac{b_{p}\varphi_{p}}{\lambda_{n}-\lambda_{p}+\lambda}.
\end{equation}
\\

These formal calculations lead us to introduce the following
notations that will be used all along the proof. We define,
\begin{itemize}
    \item The families
    \begin{equation}
    \label{def-qn}
        q_{n} := \sum\limits_{p\in\mathbb{N}^{*}}\frac{\varphi_{p}}{\lambda_{n}-\lambda_{p}+\lambda},\;\;\;K_{n} = K(\varphi_{n}),\;\;n\in\mathbb{N}^{*}.
    \end{equation}
    \item The operator 
    \begin{equation}
    \label{def-S}
    S: \textcolor{black}{\varphi_{n}\mapsto q_{n}}.
    \end{equation}
    Note that $S$ is completely defined as an operator on $\Hdiv^{r}$ {\color{black} for a certain range of $r$ that will be considered later on in this paper (eventually, for $r\in(1/2-\alpha,\alpha-1/2)$).
    This is a consequence of}
    $(n^{-r}\varphi_{n})_{n\in\mathbb{N}^{*}}$ \textcolor{black}{being} an orthonormal basis of $\Hdiv^{r}$ for any $r\in \R$ {\color{black} as well as direct estimates on $(q_n)_{n\in\mathbb{N}^{*}}$.} 
    \item The operator
    \begin{equation}\label{def-tau}
    \tau : \varphi_{n}\mapsto b_{n}\varphi_{n}.
    \end{equation}
    Note that since $b_{n}$ are uniformly bounded \textcolor{black}{from} above and below, this is an isomophism from $\textcolor{black}{H}$ to $\textcolor{black}{H}$, and in fact also from $\Hdiv^r$ to $\Hdiv^r$ for any $r\in \R$.\\
    
    \item The operator $T$ defined on $\Hdiv^{r}$
    \begin{equation}
    \label{def-T}
        T : \textcolor{black}{\varphi_{n}\mapsto (-K_{n})\tau q_{n}}.
    \end{equation}
    Note that this expression of $T$ corresponds exactly to the expression \eqref{eq:defT0} obtained from the formal calculations.
\end{itemize}
In the following we will show that, for a good choice of $(K_{n})_{n\in\mathbb{N}^{*}}$, the operator $T$ thus defined is an isomorphism from $\Hdiv^{r}$ to itself for $r\in \textcolor{black}{(1/2-\alpha, \alpha-1/2)}$, and $T$ and $K$ satisfy \eqref{op-eq00} in a sense to be specified.\\

We are going to show successively the following steps: 
\begin{enumerate}
     \item Show that $S$ is a Fredholm operator from $\Hdiv^{r}\rightarrow \Hdiv^{r}$ for any $r\in \textcolor{black}{(1/2-\alpha, \alpha-1/2)}$.\\
     
    \item Show that $(q_{n})_{n\in\mathbb{N}^{*}}$ is a Riesz basis for $\textcolor{black}{H}$ using a duality argument and the fact that $S$ is Fredholm.\\
    
    \item Further show that  $(n^{-r} q_{n})_{n\in\mathbb{N}^{*}}$ is a Riesz basis for $\Hdiv^{r}$ for any $r\in \textcolor{black}{(1/2-\alpha, \alpha-1/2)}$ by showing it is $\omega$-independent using a duality argument between the density of $(n^{-r}q_{n})_{n\in\mathbb{N}^{*}}$ in $\Hdiv^{r}$ and the $\omega$-independence of $(n^{r}\overline{q_{n}})_{n\in\mathbb{N}^{*}}$ in $\Hdiv^{-r}$.\\

    \item 
    Provide an explicit candidate of $(K_n)_{n\in\mathbb{N}}$ which satisfies $TB= B$ in $\Hdiv^{-\textcolor{black}{\alpha/2}}$. Show that $(|K_{n}|)_{n\in\mathbb{N}}$ is bounded from above and that $b_{n} K_{n} = -(\lambda+k_{n})$ for any $n\in\mathbb{N}^*$, where $(k_{n} n^{\varepsilon})_{n\in\mathbb{N}^{*}}\in l^{\infty}$ for any $\varepsilon \in [0,\textcolor{black}{\varepsilon(\alpha)})$ \textcolor{black}{(where $\varepsilon(\alpha)$ is a positive constant that can be computed)}.\\

    \item Show that $T$ is uniformly bounded from $\Hdiv^{r}$ to itself for $r\in \textcolor{black}{(1/2-\alpha, \alpha-1/2)}$ and the first operator equality \eqref{op-eq00} holds in $\mathcal{L}(\Hdiv^{\textcolor{black}{\alpha/2}}; \Hdiv^{-\textcolor{black}{\alpha/2}})$.\\
    
    \item Show that $T$ is a Fredholm operator from $\Hdiv^{-\textcolor{black}{\alpha/2}}$ to $\Hdiv^{-\textcolor{black}{\alpha/2}}$.\\
    
    \item Show that $T$ is an isomorphism from $\Hdiv^{-\textcolor{black}{\alpha/2}}$ to $\Hdiv^{-\textcolor{black}{\alpha/2}}$ using a Fredholm argument and spectral theory in $\Hdiv^{-\textcolor{black}{\alpha/2}}$. \\
    
    \item Show that $T$ is an isomorphism from $\textcolor{black}{H}$ to $\textcolor{black}{H}$ and in fact an isomorphism from $\Hdiv^{r}$ to itself for any $r\in \textcolor{black}{(1/2-\alpha, \alpha-1/2)}$.
\end{enumerate}
Let us briefly discuss steps 6 to 8 as, at first sight, it seems odd to prove the invertibility in $\Hdiv^{-\textcolor{black}{\alpha/2}}$ and not in \textcolor{black}{our reference} $\textcolor{black}{H}$ space for instance. The main motivation is to avoid working in the space $D(A+BK):=\{f\in \textcolor{black}{H} \, : \, (A+BK)f \in \textcolor{black}{H}\}$ before proving the invertibility of $T$. Indeed, in the present setting, the space $D(A+BK)$ does not have nice properties shared by the \textcolor{black}{$\Hdiv^{s}$} spaces \textcolor{black}{such as the density of $\Hdiv^{\infty}$ functions\footnote{\textcolor{black}{Note that in usual cases where, for instance, $\Hdiv^{s}$ = $H^{s}(\mathbb{T})$, the $\Hdiv^{\infty}$ functions are simply $C^{\infty}$ functions, from Sobolev embedding. {\color{black}Notice that the eigenfunctions $(\varphi_n)_n$ belong to the space $\Hdiv^{\infty}$}.}}, defined as 
\begin{equation*}
    \Hdiv^{\infty}:=\bigcap_{k\in \mathbb{N}} \Hdiv^k,
\end{equation*}
see for instance \eqref{high-reg-domains}}.
This comes from the fact that $B$ is not regular enough, and therefore one is not able to conclude that $\varphi_n \in D(A+BK)$ for any $n\in \N^*$. Hence, it is easier to first prove the invertibility in weaker but classical \textcolor{black}{$\Hdiv^{s}$} space (step 6 and 7) before deducing the invertibility in the required spaces (step 8). In turn, the invertibility of $T$ in $\Hdiv^r$ allows us to construct an equivalent norm on $\Hdiv^r$, which allows us to prove that $D(A+BK)$ is an Hilbert space, a non-trivial task without the invertibility of $T$. We underline that if our setting is close to the linearized bilinear Schr\"odinger equation, the fact that the control is real-valued in \cite{CGM} allows to decouple the real and imaginary part of the solution to deal directly with the space $D(A+BK)$, which is not the case here.

We start by introducing some technical lemmas in Subsection \ref{sec:techlem}. Then we prove Proposition \ref{prop:th}, following the outline above : we prove steps (1)-(3) in Section \ref{sec:proof} and steps (4)-(8) in Section \ref{sec:proof2}. Finally, we prove the well-posedness of the closed-loop system obtained and Corollary \ref{coro:expostab} in Section \ref{sec-wellposed}.

\section{Compactness/duality method for Riesz basis}
\label{sec:proof}
Following the outline detailed in Section \ref{sec:strategyandoutline}, we devote this section to the proofs of Steps (1)-(3). These steps form a first important part of the proof of our main theorem: they revolve around Riesz basis properties for some important families of functions derived from the backstepping operator equalities \eqref{op-eq00}. As we have mentioned in the introduction, we introduce here a new method based on compactness and duality, namely, we prove in a general way that the transformations involved in our backstepping method are Fredholm operators.

\subsection{Some basic estimates}
\label{sec:techlem}
In this section we introduce some technical lemmas that will be used in the following. For the readers' convenience, the technical parts of certain proofs are postponed to Appendix \ref{sec:appendixc}. 

The first  lemma is a direct consequence of the existence of $c, C>0$ such that, 
\begin{equation*}
    c n^{\textcolor{black}{\alpha}}\leq |\lambda_n|{\color{black}+1} \leq C n^{\textcolor{black}{\alpha}}, \; \forall n\in \N^*.
\end{equation*}
\begin{lem}\label{lem:Lresoland}
Let $s\in \mathbb{R}$. Let $\rho\in \C$ be in the resolvent set of the operator $A$. We know that 
\begin{gather*}
    A: \Hdiv^{s+ \textcolor{black}{\alpha/2}}\rightarrow \Hdiv^{s- \textcolor{black}{\alpha/2}} \textrm{ is continuous,} \\
    (A- \rho I)^{-1}: \Hdiv^{s- \textcolor{black}{\alpha/2}}\rightarrow \Hdiv^{s+ \textcolor{black}{\alpha/2}} \textrm{ is continuous.}
\end{gather*}
\end{lem}

Next, we give an important technical lemma:

\textcolor{black}{
\begin{lem}
\label{lem:tech3}
For any $s<\alpha-1$ we have
\begin{equation}
    \sum\limits_{n\in\mathbb{N}^{*}\setminus\{p\}}\frac{n^{s}}{|\lambda_{n}-\lambda_{p}|}\lesssim p^{1- \alpha+ s}\log(p)+p^{-\alpha}, \; \forall p\in \N^*.
\end{equation}
\end{lem}
}
\begin{proof}[Proof of Lemma \ref{lem:tech3}]
Let $s<\textcolor{black}{\alpha-1}$, we have 
\begin{equation*}
    \sum\limits_{n\in\mathbb{N}^{*}\setminus\{p\}}\frac{n^{s}}{|\lambda_{n}-\lambda_{p}|} = I_{1}+I_{2}+I_{3},
\end{equation*}
where
\begin{equation*}
\begin{split}
    I_{1} &= \sum\limits_{n\in\mathbb{N}^{*},\;n\leq p/2}\frac{n^{s}}{|\lambda_{n}-\lambda_{p}|},\\
    I_{2} &=\sum\limits_{n\in\mathbb{N}^{*}\setminus\{p\},\;p/2<n< 2p}\frac{n^{s}}{|\lambda_{n}-\lambda_{p}|},\\
    I_{3} &= \sum\limits_{n=2p}^{+\infty}\frac{n^{s}}{|\lambda_{n}-\lambda_{p}|}.
    \end{split}
\end{equation*}
We will show that all these three terms can be bounded by $C(p^{-\textcolor{black}{(\alpha-1)}+s}\log(p)+p^{-\textcolor{black}{\alpha}})$ where $C$ is a constant independent of $p$. For this, we introduce the following basic estimates (the proofs may be found in Appendix \ref{sec:appendixc}).

\begin{lem}
\label{lem1}
There exists $c>0$ such that for any $(n,m)\in\mathbb{N}^*$
\begin{gather*}
|\lambda_{n}-\lambda_{m}|\geq c |n-m|^{\textcolor{black}{\alpha}}.
\end{gather*}
\end{lem}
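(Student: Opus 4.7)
Since $\lambda_k = -i\sqrt{k(g+k^2)\tanh(hk)}$ is purely imaginary, setting $f(k) := \sqrt{k(g+k^2)\tanh(hk)}$ (a positive real number for $k\geq 1$), both inequalities amount to lower bounds on $|f(n)-f(m)|$. The plan is to linearize the square root via the standard conjugation
\begin{equation*}
f(n) - f(m) = \frac{n(g+n^2)\tanh(hn) - m(g+m^2)\tanh(hm)}{f(n) + f(m)},
\end{equation*}
reducing the problem to a polynomial estimate of the numerator divided by a crude $\sim\max(n,m)^{3/2}$ upper bound for the denominator.

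For the denominator, since $\tanh(hk)\leq 1$ and $k(g+k^2)\leq Ck^3$ for $k\geq 1$, one has $f(n)+f(m)\leq 2C\max(n,m)^{3/2}$. For the numerator, assume without loss of generality $n\geq m$ and split
\begin{equation*}
n(g+n^2)\tanh(hn) - m(g+m^2)\tanh(hm) = \tanh(hn)\bigl[n(g+n^2)-m(g+m^2)\bigr] + m(g+m^2)\bigl[\tanh(hn)-\tanh(hm)\bigr].
\end{equation*}
Monotonicity of both $k\mapsto k(g+k^2)$ and $k\mapsto \tanh(hk)$ makes each summand nonnegative, so I can discard the second and use the factorization $n(g+n^2)-m(g+m^2) = (n-m)(g+n^2+nm+m^2)\geq (n-m)n^2$ together with $\tanh(hn)\geq \tanh(h)>0$ to obtain numerator $\geq \tanh(h)\,(n-m)\,n^2$. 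Dividing by the denominator bound yields $|\lambda_n-\lambda_m|\geq c(n-m)n^{1/2}$ for $n\geq m$, which after symmetric relabeling gives the second stated inequality. The first inequality is then automatic: since $m\geq 1$ implies $n\geq n-m$, one has $n^{1/2}\geq (n-m)^{1/2}$, hence $(n-m)n^{1/2}\geq (n-m)^{3/2}$.

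\textbf{Main obstacle.} Honestly, there is not much of one; the argument is an effective mean value theorem that makes the derivative behavior $f'(k)\sim k^{1/2}$ quantitative uniformly down to $k=1$. The only point requiring care is the sign bookkeeping in the numerator split, where the joint monotonicity of the two factors $k(g+k^2)$ and $\tanh(hk)$, together with the ordering $n\geq m$, is what allows one to discard a term cleanly. The conjugate identity is preferable to a direct application of the mean value theorem precisely because it avoids having to analyze $f'$ at small $k$, where the $k^{1/2}$ asymptotics are contaminated by the gravity constant $g$.
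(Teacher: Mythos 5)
Your proof is correct, but it takes a genuinely different route from the paper's. The paper works directly with the increasing function $g(x)=\bigl((g+x^2)x\tanh(hx)\bigr)^{1/2}$, computes its derivative and shows $g'(x)\gtrsim x^{1/2}$ on $[1,+\infty)$, and then splits into two cases: if $n\geq 2m$ it uses the doubling property $g^2(2x)\geq 2g^2(x)$ to get $g(n)-g(m)\geq g(n)-g(n/2)\gtrsim g(n)\gtrsim n^{3/2}\geq (n-m)n^{1/2}$, while if $m<n<2m$ it applies the mean value theorem, $g(n)-g(m)=(n-m)g'(y)\gtrsim (n-m)m^{1/2}\gtrsim (n-m)n^{1/2}$ — i.e.\ exactly the quantitative mean value argument you chose to avoid, made workable by the case distinction (which compensates for the fact that the MVT only sees $m^{1/2}$, not $n^{1/2}$). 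Your conjugation $f(n)-f(m)=\bigl(f(n)^2-f(m)^2\bigr)/\bigl(f(n)+f(m)\bigr)$, combined with the exact factorization $n(g+n^2)-m(g+m^2)=(n-m)(g+n^2+nm+m^2)\geq (n-m)n^2$, the monotonicity bookkeeping that lets you drop the $\tanh$-difference term, and the crude bound $f(n)+f(m)\lesssim n^{3/2}$, reaches the same bound in one stroke: no derivative computation, no case split, and the behavior near $k=1$ is handled automatically. Both proofs are complete and of comparable length; yours is somewhat more elementary and self-contained, while the paper's derivative/MVT formulation is closer in spirit to the abstract hypothesis $|n_1-n_2|\,n_1^{\alpha-1}\lesssim |h(n_1)-h(n_2)|$ assumed for the general operators of Theorem \ref{thalpha}. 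Your handling of the case $m>n$ by relabeling (the bound with the larger index is the stronger one) and the deduction of the first inequality from the second via $n\geq n-m$ are both fine and match the paper's reduction.
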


\begin{lem}
\label{lem:tech1}
For any $s\neq -1$, there exists $C>0$ such that for any $p\in\mathbb{N}^{*}$
\begin{equation*}
    \sum\limits_{n=1}^{p} n^{s}\leq C(1+p^{1+s}).
\end{equation*}
\end{lem}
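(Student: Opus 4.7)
The plan is to prove the estimate by a standard integral-comparison argument, splitting into cases according to the sign of $1+s$ (which is well-defined since the hypothesis excludes $s=-1$). First, for $s\geq 0$, I would exploit monotonicity of $x\mapsto x^s$ to get $n^s \leq \int_n^{n+1} x^s\,dx$ on each unit interval, then sum over $n=1,\ldots,p$ to bound $\sum_{n=1}^p n^s$ by $((p+1)^{1+s}-1)/(1+s)$, which is in turn $\leq C p^{1+s}$ for a constant depending only on $s$. Second, for $-1<s<0$, the function is decreasing, so $n^s \leq \int_{n-1}^n x^s\,dx$ for $n\geq 2$ and the first term contributes exactly $1$; summing yields $\sum_{n=1}^p n^s \leq 1+(p^{1+s}-1)/(1+s)$, which already fits the required form $C(1+p^{1+s})$.

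The remaining case $s<-1$ is the easiest: the full series $\sum_{n\geq 1} n^s$ converges to a finite value $\zeta(-s)$, so the partial sums are uniformly bounded by a constant, and the claim follows immediately since $1+p^{1+s}\geq 1$. The case $s=0$ reduces to $\sum_{n=1}^p 1 = p \leq 1+p^{1+s}$. This statement is an elementary bookkeeping estimate used only to control the three sums $I_1, I_2, I_3$ appearing in the proof of Lemma \ref{lem:tech3}, and presents no real obstacle; the only minor subtlety is to pick the correct direction of the integral comparison according to whether $x\mapsto x^s$ is increasing or decreasing on $[1,\infty)$, which is precisely what motivates the case split on the sign of $s$.
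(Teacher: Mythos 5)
Your proof is correct and rests on the same idea as the paper's: comparing the sum with $\int x^{s}\,dx$. The paper merely packages it more compactly, using the uniform two-sided comparability $c_0 n^{s}\leq x^{s}\leq C_0 n^{s}$ on each interval $[n,n+1]$ to treat all $s\neq -1$ (including $s<-1$) in one stroke via $\int_1^{p+1}x^{s}\,dx=\frac{(p+1)^{s+1}-1}{s+1}$, whereas you split into monotonicity cases and invoke convergence of $\sum n^{s}$ for $s<-1$; both routes are fine.
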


\begin{lem}
\label{lem:tech2}
For any $s\in\mathbb{R}$ {\color{black}and $\varepsilon>0$, there exists $C>0$ such that,  }
\begin{equation*}
        \sum\limits_{n=1}^{p} n^{s}\log(n)\leq C(1+p^{1+s+\varepsilon}).
\end{equation*}
\end{lem}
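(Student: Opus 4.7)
The plan is to reduce this inequality to the preceding Lemma \ref{lem:tech1} by absorbing the logarithmic factor into a small power of $n$. The crucial elementary fact is that for every $\eta > 0$ there exists $C_{\eta} > 0$ such that $\log(n) \leq C_{\eta} n^{\eta}$ for all $n \geq 1$; this is just the growth comparison between $\log$ and any positive power at infinity, combined with continuity on $[1, N_\eta]$ for some finite $N_\eta$. Given $s \in \mathbb{R}$ and $\varepsilon_1 > 0$, I would first pick $\varepsilon \in (0, \varepsilon_1)$ with the extra requirement $s + \varepsilon \neq -1$, so that Lemma \ref{lem:tech1} is applicable with exponent $s + \varepsilon$. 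This is always possible: the forbidden value $-1 - s$ is a single point, and $(0, \varepsilon_1)$ is a nonempty open interval, so one can always avoid it (for example take $\varepsilon = \varepsilon_1/2$ if this is not the forbidden value, otherwise $\varepsilon_1/3$).

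With $\varepsilon$ fixed in this way, applying the bound $\log(n) \leq C_\varepsilon n^\varepsilon$ termwise gives
\begin{equation*}
\sum_{n=1}^{p} n^{s} \log(n) \;\leq\; C_\varepsilon \sum_{n=1}^{p} n^{s+\varepsilon},
\end{equation*}
and then Lemma \ref{lem:tech1} applied with exponent $s + \varepsilon \neq -1$ yields
\begin{equation*}
\sum_{n=1}^{p} n^{s+\varepsilon} \;\leq\; C\bigl(1 + p^{1+s+\varepsilon}\bigr),
\end{equation*}
which together with the previous display is the desired conclusion, up to renaming the constant. There is no real obstacle here: the argument is essentially a one-line reduction, and the only mild subtlety is choosing $\varepsilon$ to stay clear of the one value for which Lemma \ref{lem:tech1} does not directly apply. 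Note also that no lower bound on $\varepsilon$ is required by the statement, so shrinking $\varepsilon$ to ensure $s + \varepsilon \neq -1$ and $\varepsilon < \varepsilon_1$ is harmless.
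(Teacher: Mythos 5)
Your proof is correct and is essentially identical to the paper's: choose $\varepsilon\in(0,\varepsilon_{1})$ with $s+\varepsilon\neq -1$, absorb $\log(n)$ into $C_{\varepsilon}n^{\varepsilon}$, and apply Lemma \ref{lem:tech1} with exponent $s+\varepsilon$. No gaps.
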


We first consider $I_{1}$. \textcolor{black}{First note that} there exists $C>0$ independent of $p$ and $n$, \textcolor{black}{such that if $n\leq p/2$, then} $|\lambda_{p}-\lambda_{n}|\geq C^{-1}p^{\textcolor{black}{\alpha}}$. Using Lemma \ref{lem1}
\begin{equation}
\label{eq:estimI1}
\sum\limits_{n\in\mathbb{N}^{*},n\leq p/2}\frac{n^{s}}{|\lambda_{n}-\lambda_{p}|}
\lesssim  p^{\textcolor{black}{-\alpha}}\left(\sum\limits_{n\in\mathbb{N}^{*}, n\leq p/2}n^{s}\right)\lesssim  p^{-\textcolor{black}{\alpha}}+p^{-\textcolor{black}{(\alpha-1)}+s}\log(p),
\end{equation}
where in the rightmost inequality we used Lemma \ref{lem:tech1} if $s\neq -1$ and $p^{1+s}\leq p^{1+s}\log(p)$ for $p$ large enough, and if $s=-1$ then we simply used that $\left(\sum\limits_{n=0}^{p}1/n\right)=O(\log(p))$.\\

Then we turn to  $I_{2}$, using Lemma \ref{lem1} we have
\begin{align*}
\sum\limits_{n\in\mathbb{N}^{*}\setminus\{p\},p/2<n< 2p}\frac{n^{s}}{|\lambda_{n}-\lambda_{p}|}
& \lesssim p^{s}\left(\sum\limits_{n\in\mathbb{N}^{*}\setminus\{p\},p/2<n< 2p}\frac{1}{|\lambda_{n}-\lambda_{p}|}\right) \\
& \lesssim  p^{s}\left(\sum\limits_{n\in\mathbb{N}^{*}\setminus\{p\}, p/2<n< 2p}\frac{1}{|n-p|p^{\textcolor{black}{\alpha-1}}}\right). 
\end{align*}
Notice that
\begin{equation*}
\sum\limits_{n\in\mathbb{N}^{*}\setminus\{p\},p/2<n< 2p}\frac{1}{|n-p|}\leq \sum\limits_{k\leq p/2}\frac{1}{k}+\sum\limits_{k=1}^{p}\frac{1}{k}\lesssim \log(p),
\end{equation*}
hence
\begin{equation}
\label{eq:estimI2}
\sum\limits_{n\in\mathbb{N}^{*}\setminus\{p\},p/2<n< 2p}\frac{n^{s}}{|\lambda_{n}-\lambda_{p}|}
\lesssim p^{-\textcolor{black}{(\alpha-1)}+s}\log(p),
\end{equation}
which gives the bound on $I_{2}$.

We finally consider $I_{3}$. Since $n>2p$, there exists a constant $C>0$ independent of $p$ such that $|\lambda_{n}-\lambda_{p}|\geq C^{-1} n^{\textcolor{black}{\alpha}}$ from Lemma \ref{lem1}, thus
\begin{equation}
\label{eq:estimI3}
I_{3}= \sum\limits_{n=2p}^{+\infty}\frac{n^{s}}{|\lambda_{n}-\lambda_{p}|}\lesssim \sum\limits_{n=2p}^{+\infty}n^{s-\textcolor{black}{\alpha}}
\lesssim \int_{2p}^{+\infty} x^{s-\textcolor{black}{\alpha}}dx
\lesssim  p^{-\textcolor{black}{(\alpha-1)}+s},
\end{equation}
where we used that $s<\textcolor{black}{(\alpha-1)}$ thus $s-\textcolor{black}{\alpha}<-1$. Combining \eqref{eq:estimI1}, \eqref{eq:estimI2} and \eqref{eq:estimI3} we deduce that 
\begin{equation*}
I_{1}+I_{2}+I_{3}\lesssim p^{-\textcolor{black}{(\alpha-1)}+s}\log(p)+p^{-\textcolor{black}{\alpha}}.
\end{equation*}
This ends the proof of Lemma \ref{lem:tech3}.
\end{proof}

\subsection{Step (1): a general Fredholm operator}
In this subsection, we show the following proposition.
\begin{prop}
\label{S:fredholm}
For any \textcolor{black}{$r\in(1/2-\alpha,\alpha-1/2)$}, there exists a compact operator $S_{c}$ from $\Hdiv^{r}$ into itself such that the operator $S$ defined by \eqref{def-S} satisfies on $\Hdiv^{r}$,
\begin{equation}
\label{decom-S}
    S = \frac{1}{\lambda}Id+S_{c}.
\end{equation}
In particular, $S$ is a Fredholm operator (of index 0) from $\Hdiv^{r}$ into itself.
\end{prop}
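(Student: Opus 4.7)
The natural decomposition emerges by isolating the diagonal term $p=n$ in the sum defining $q_{n}$: I would set
\[
S_c(n^{-r}\varphi_n) := n^{-r}\sum_{p\in\N^{*},\,p\neq n}\frac{\varphi_p}{\lambda_n-\lambda_p+\lambda},
\]
so that $S=\tfrac{1}{\lambda}\id+S_c$ holds on the orthonormal basis $(n^{-r}\varphi_n)_{n\geq 1}$ of $H^{r}$. The plan is then to prove that $S_c$ extends to a bounded and compact operator on $H^{r}$ for every $r\in(-1,1)$; once this is done, $S$ is a compact perturbation of an isomorphism and Fredholm of index $0$ follows from standard theory. To work uniformly, I transport the problem to $\ell^{2}$ via the isometry $H^{r}\ni f=\sum f_n\varphi_n\mapsto (n^{r}f_n)\in\ell^{2}$, under which $S_c$ becomes multiplication by
\[
M_{pn}(r)=\frac{p^{r}n^{-r}}{\lambda_n-\lambda_p+\lambda}\ (n\neq p),\qquad M_{nn}(r)=0.
\]

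For boundedness, the Schur test fed with Lemma \ref{lem:tech3} yields a bound on both row and column sums whenever the exponents appearing stay below the threshold $s<1/2$ of that lemma, which directly handles $r\in(-1/2,1/2)$. For $r\in(-1,-1/2]$ the unweighted row sum diverges, but a weighted Schur test with weights $w_n = n^{s-r}$ and $s\in(-1/2,\,r+1/2)$ (a nonempty interval since $r>-1$) restores summability on both sides, using Lemma \ref{lem:tech3} again. This yields boundedness of $S_c$ on $H^{r}$ for every $r\in(-1,1/2)$.

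For compactness on the same range, I would approximate $S_c$ by the finite rank operators $S_c^{N}:=P_N S_c P_N$, with $P_N$ the orthogonal projection onto $\vect\{\varphi_1,\ldots,\varphi_N\}$. From the identity $S_c-S_c^{N}=(I-P_N)S_c+P_N S_c(I-P_N)$, it is enough to show $\|(I-P_N)S_c\|\to 0$ and $\|S_c(I-P_N)\|\to 0$. These are (weighted) Schur norms of the matrix $M(r)$ restricted to $p>N$ or $n>N$, and a careful application of Lemmas \ref{lem1}--\ref{lem:tech3} gives tails of order $\lesssim N^{-1/2}\log N$, yielding compactness of $S_c$ on $H^{r}$ for every $r\in(-1,1/2)$. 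To reach $r\in[1/2,1)$, I would invoke a duality symmetry: since $\lambda_n$ is purely imaginary (see \eqref{eigenval}) and $\lambda\in\R$, a direct check gives $\overline{\lambda_p-\lambda_n+\lambda}=\lambda_n-\lambda_p+\lambda$, hence $M(r)^{*}=M(-r)$ as $\ell^{2}$-matrices. Thus $\|S_c\|_{H^{r}\to H^{r}}=\|S_c\|_{H^{-r}\to H^{-r}}$ and $S_c$ is compact on $H^{r}$ iff it is compact on $H^{-r}$; compactness on $H^{-r}$ for $-r\in(-1,-1/2]$ (already proved) transfers to compactness on $H^{r}$ for $r\in[1/2,1)$, and the two ranges together cover $(-1,1)$.

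The main obstacle, and the reason the method announced in the introduction is nontrivial, is that the Hilbert--Schmidt criterion fails exactly at the critical scaling $\alpha=3/2$: a direct estimate gives $\sum_{n\geq 1}\|S_c(n^{-r}\varphi_n)\|_{H^{r}}^{2}\sim\sum 1/n=\infty$, so compactness cannot be read off from the Hilbert--Schmidt norm as in the supercritical case $\alpha>3/2$. The workaround is the combination just described: weighted Schur for boundedness, finite rank approximation with Schur tail bounds for compactness on $r\in(-1,1/2)$, and the duality symmetry $M(r)^{*}=M(-r)$ to reach the full range $r\in(-1,1)$.
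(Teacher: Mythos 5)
Your decomposition is the same as the paper's (extract the diagonal term $p=n$), and several of your ingredients are sound: the matrix representation $M_{pn}(r)=p^{r}n^{-r}/(\lambda_{n}-\lambda_{p}+\lambda)$, the unweighted Schur test for $r\in(-1/2,1/2)$, the observation that the Hilbert--Schmidt sum diverges (this is exactly Remark \ref{rmrk-compactness-Riesz}), and the duality identity $M(r)^{*}=M(-r)$, which is correct because the $\lambda_{n}$ are purely imaginary and $\lambda$ is real. The genuine problem is the weighted Schur step for $r\in(-1,-1/2]$: with $w_{n}=n^{s-r}$ and $s\in(-1/2,r+1/2)$, the weighted row sums $\sum_{n\neq p}|M_{pn}(r)|\,w_{n}=p^{r}\sum_{n\neq p}n^{s-2r}/|\lambda_{n}-\lambda_{p}+\lambda|$ involve the exponent $s-2r\geq 1/2$, which is outside the range of Lemma \ref{lem:tech3}; worse, the summands behave like $n^{s-2r-3/2}$ with exponent $\geq -1$, so these sums are actually infinite and the test gives nothing. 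Since your range $r\in[1/2,1)$ is obtained by duality precisely from this case, the argument as written only establishes the proposition for $r\in(-1/2,1/2)$. The step is fixable: take $w_{n}=n^{t}$ with $t\in(|r|-3/2,\;1/2-|r|)$, an interval that is nonempty exactly because $|r|<1$ (for instance $t=-1/2$ works for every $r\in(-1,1)$); then Lemma \ref{lem:tech3} bounds the two Schur sums $\sum_{n}|M_{pn}|w_{n}/w_{p}$ and $\sum_{p}|M_{pn}|w_{p}/w_{n}$ by $p^{-1/2}\log p+p^{r-t-3/2}$ and $n^{-1/2}\log n+n^{-r-t-3/2}$ respectively, your finite-rank tail estimates go through, and you obtain boundedness and compactness on the whole range $(-1,1)$ in one pass, making the duality symmetry optional.

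Once corrected, your route is genuinely different from the paper's. The paper does not truncate: Lemma \ref{lem4} shows that $S_{c}$ is bounded from $H^{r}$ into $H^{r+\varepsilon}$ for some $\varepsilon(r)>0$, via Cauchy--Schwarz with an auxiliary power weight, Fubini, and two applications of Lemma \ref{lem:tech3}; compactness of $S_{c}$ on $H^{r}$ is then immediate from the compact embedding of $H^{r+\varepsilon}$ into $H^{r}$, with no finite-rank approximation and no adjoint argument. The smoothing formulation buys compactness in one stroke, whereas your Schur-plus-truncation argument (with corrected weights) yields the same compactness but no gain of regularity; both ultimately rest on the same critical estimate, Lemma \ref{lem:tech3}.
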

Let us first show that for any $a\in \Hdiv^{r}$, denoting $a_{n} = \left( a,n^{-r}\varphi_{n}\right)_{\Hdiv^{r}}$, such that $a = \sum\limits_{n\in\mathbb{N}^{*}}a_{n}n^{-r}\varphi_{n}$, we have, 
\begin{equation}
\label{decompS2}
    S a = \frac{a}{\lambda}+ \left(\sum\limits_{n\in\mathbb{N}^{*}}a_{n}n^{-r}\left(\sum\limits_{p\in\mathbb{N}^{*}\setminus\{n\}}\frac{\varphi_{p}}{\lambda_{n}-\lambda_{p}+\lambda}\right)\right).
\end{equation}
Indeed, from the definition of \eqref{def-S},
\begin{equation*}
\begin{split}
    S a =& \sum\limits_{n\in\mathbb{N}^{*}} a_{n}n^{-r}S(\varphi_{n}) = \sum\limits_{n\in\mathbb{N}^{*}} a_{n}n^{-r}q_{n}\\
    &=\sum\limits_{n\in\mathbb{N}^{*}} a_{n}n^{-r}\left(\sum\limits_{p\in\mathbb{N}^{*}}\frac{\varphi_{p}}{\lambda_{n}-\lambda_{p}+\lambda}\right)\\
    &=\frac{1}{\lambda}\left(\sum\limits_{n\in\mathbb{N}^{*}} a_{n}n^{-r}\varphi_{n}\right)+\sum\limits_{n\in\mathbb{N}^{*}}a_{n}n^{-r}\left(\sum\limits_{p\in\mathbb{N}^{*}\setminus\{n\}}\frac{\varphi_{p}}{\lambda_{n}-\lambda_{p}+\lambda}\right),
\end{split}
\end{equation*}
which, given the definition of $a$, is exactly \eqref{decompS2}. Now we show the following.
\textcolor{black}{
\begin{lem}
\label{lem4}
For any $r\in(1/2-\alpha, \alpha-1/2)$, there exists $\varepsilon =\varepsilon(r)>0$ such that the operator $S_{c}$ defined by,
\begin{equation*}
  S_c:  \sum\limits_{n\in\mathbb{N}^{*}}a_{n}n^{-r}\varphi_{n} \mapsto \sum\limits_{n\in\mathbb{N}^{*}}a_{n}n^{-r}\left(\sum\limits_{p\in\mathbb{N}^{*}\setminus\{n\}}\frac{\varphi_{p}}{\lambda_{n}-\lambda_{p}+\lambda}\right),
\end{equation*}
is continuous from $\Hdiv^{r}$ to $\Hdiv^{r+\varepsilon}$.
In particular, this operator is compact from $\Hdiv^{r}$ to itself. 
\end{lem}
}

The proof of Lemma \ref{lem4} is based on a careful estimation allowed by the Lemma \ref{lem:tech3}, and we give its proof below. 
Proposition \ref{S:fredholm} then follows from \eqref{decompS2} and Lemma \ref{lem4}.
\begin{remark}\label{remark:rightbound}
Note, as a corollary of Lemma \ref{lem4} and the expression of $S$ given by \eqref{decompS2}, that for any $r\in\textcolor{black}{(1/2-\alpha,\alpha-1/2)}$, there exists $C>0$ such that for any $(a_{n})_{n\in\mathbb{N}^{*}}\in l^{2}$ one has,
\begin{equation}
\label{S:bounded}
\left\|\sum\limits_{n\in\mathbb{N}^{*}} a_{n}n^{-r}q_{n} \right\|_{\Hdiv^{r}}^2\leq C\sum\limits_{n\in\mathbb{N}^{*}}|a_{n}|^{2}.
\end{equation}
which means that $S$ is a bounded operator from $\Hdiv^{r}$ into itself. In fact, Proposition \ref{S:fredholm} is stronger since it shows that $S$ is even a Fredholm operator {\color{black}from $\Hdiv^r$ to itself}.
\end{remark}

\begin{remark}\label{rmrk-compactness-Riesz}
As we have mentioned in the introduction, previous works on the backstepping method use the quadratically close criterion to prove that $q_n$ is a Riesz basis. In our case, one would then seek to prove that 
\begin{equation}
\label{HS0}
    \sum_{n\in \N^\ast} \left\|n^{-r} q_n - \frac1{\lambda}n^{-r}\varphi_n\right\|^2_{\Hdiv^r} < +\infty.
\end{equation}
In terms of $S_c$, this amounts to
\begin{equation}
    \label{Hilbert-Schmidt-Sc}
      \sum_{n\in \N^\ast}  \|S_c n^{-r} \varphi_n\|^2_{\Hdiv^r} < +\infty,
\end{equation}
which is the Hilbert-Schmidt compactness criterion for $S_c$. However, in our case, \eqref{HS0}--\eqref{Hilbert-Schmidt-Sc} does not hold. Our new compactness/duality method illustrates that the relevant property is not {\color{black} the Hilbert-Schmidt compactness criterion \eqref{Hilbert-Schmidt-Sc}}, but simply the compactness of $S_c$, which we prove here in a more general way. This, together with the duality argument presented in Steps (2) and (3) below, leads to Riesz basis properties thanks to the Fredholm alternative. 

Interestingly, this illustrates that there is a link between the growth of the eigenvalues and the class of compact operators that appear in the Fredholm decomposition of $S$. 
\end{remark}
\begin{proof}[Proof of Lemma \ref{lem4}]
We start by considering two different cases. First, let $r\in(\textcolor{black}{1/2-\alpha},0]$. What we need to show is that there exists $C>0$ and $\varepsilon>0$ such that for any $(a_{n})_{n\in\mathbb{N}^{*}}\in l^{2}$,
\begin{equation*}
  \left\|\sum\limits_{n\in\mathbb{N}^{*}}a_{n}n^{-r}\left(\sum\limits_{p\in\mathbb{N}^{*}\setminus\{n\}}\frac{\varphi_{p}}{\lambda_{n}-\lambda_{p}+\lambda}\right)\right\|_{\Hdiv^{r+\varepsilon}}\leq C\left\|\sum\limits_{n\in\mathbb{N}^{*}}a_{n}n^{-r}\varphi_{n}\right\|_{\Hdiv^{r}}.
\end{equation*}
Notice that the following {\color{black} equality} holds in {\color{black}$\Hdiv^{1/2-\alpha}$},
\begin{equation*}
\sum\limits_{n\in\mathbb{N}^{*}}a_{n}n^{-r}\left(\sum\limits_{p\in\mathbb{N}^{*}\setminus\{n\}}\frac{\varphi_{p}}{\lambda_{n}-\lambda_{p}+\lambda}\right) = \sum\limits_{p\in\mathbb{N}^{*}}\varphi_{p}\left(\sum\limits_{n\in\mathbb{N}^{*}\setminus\{p\}}\frac{a_{n}n^{-r}}{\lambda_{n}-\lambda_{p}+\lambda}\right).
\end{equation*}
Hence, it suffices to show that,
\begin{equation}
\label{eq:lem4goal}
\sum\limits_{p\in\mathbb{N}^{*}}p^{2r+2\varepsilon}\left|\sum\limits_{n\in\mathbb{N}^{*}\setminus\{p\}}\frac{a_{n}n^{-r}}{\lambda_{n}-\lambda_{p}+\lambda}\right|^{2}
\leq C\sum\limits_{n\in\mathbb{N}^{*}}|a_{n}|^{2}.
\end{equation}
Let us look at the left-hand side. \textcolor{black}{Recall that, since $\mathcal{A}$ is skew-adjoint,  the eigenvalues $(\lambda_{n})_{n}$ are purely imaginary while $\lambda\in\mathbb{R}_{+}\setminus\{0\}$. Therefore
\begin{equation}
\label{eq:imaginary}
    |\lambda_{n}-\lambda_{p}| \leq |\lambda_{n}-\lambda_{p}+\lambda| \leq |\lambda_{n}-\lambda_{p}|+\lambda.
\end{equation}
Thus, using a Cauchy-Schwarz inequality, we} have,
\begin{equation*}
\begin{split}
\sum\limits_{p\in\mathbb{N}^{*}}& p^{2r+2\varepsilon}\left|\sum\limits_{n\in\mathbb{N}^{*}\setminus\{p\}}\frac{a_{n}n^{-r}}{\lambda_{n}-\lambda_{p}+\lambda}\right|^{2}\\
&\leq \sum\limits_{p\in\mathbb{N}^{*}}p^{2r+2\varepsilon}\left(\sum\limits_{n\in\mathbb{N}^{*}\setminus\{p\}}\frac{|a_{n}|^{2}n^{-2r+\textcolor{black}{1-\alpha}+2\varepsilon}}{|\lambda_{n}-\lambda_{p}|}\right)\left(\sum\limits_{n\in\mathbb{N}^{*}\setminus\{p\}}\frac{n^{\textcolor{black}{(\alpha-1)}-2\varepsilon}}{|\lambda_{n}-\lambda_{p}|}\right).
\end{split}
\end{equation*}
Then, using Lemma \ref{lem:tech3}, Fubini theorem (since all terms are nonnegative), and again \textcolor{black}{\eqref{eq:imaginary} and} Lemma \ref{lem:tech3} (since $2r<\textcolor{black}{\alpha-1}$) we get, {\color{black}for $\varepsilon\in (0, 2\alpha- 2)$,}
\begin{equation}
\label{eq:complem4}
\begin{split}
\;\;\; \sum\limits_{p\in\mathbb{N}^{*}} & p^{2r+2\varepsilon}\left|\sum\limits_{n\in\mathbb{N}^{*}\setminus\{p\}}\frac{a_{n}n^{-r}}{\lambda_{n}-\lambda_{p}+\lambda}\right|^{2} \\
&\lesssim \sum\limits_{p\in\mathbb{N}^{*}}p^{2r+2\varepsilon}\left(\sum\limits_{n\in\mathbb{N}^{*}\setminus\{p\}}\frac{|a_{n}|^{2}n^{-2r-\textcolor{black}{(\alpha-1)}+2\varepsilon}}{|\lambda_{n}-\lambda_{p}|}\right)\left(\textcolor{black}{p^{-2\varepsilon}\log(p)+p^{-\alpha}}\right)\\
    &\lesssim \sum\limits_{n\in\mathbb{N}^{*}}|a_{n}|^{2}n^{-2r-\textcolor{black}{(\alpha-1)}+2\varepsilon}\sum\limits_{p\in\mathbb{N}^{*}\setminus\{n\}}\frac{p^{2r+2\varepsilon}\left(p^{-2\varepsilon}\log(p)+p^{-\textcolor{black}{\alpha}}\right)}{|\lambda_{n}-\lambda_{p}|}\\
    &\lesssim \sum\limits_{n\in\mathbb{N}^{*}}|a_{n}|^{2}n^{-2r-\textcolor{black}{(\alpha-1)}+2\varepsilon}\sum\limits_{p\in\mathbb{N}^{*}\setminus\{n\}}\frac{p^{2r+2\varepsilon}\left(p^{-(3/2)\varepsilon}+p^{-\textcolor{black}{\alpha}}\right)}{|\lambda_{n}-\lambda_{p}|}\\
    &\lesssim \sum\limits_{n\in\mathbb{N}^{*}}|a_{n}|^{2}n^{-2r-\textcolor{black}{(\alpha-1)}+2\varepsilon}\left(n^{\textcolor{black}{(1-\alpha)}+2r+\varepsilon/2}\log(n)+n^{-\textcolor{black}{(2\alpha-1)}+2r+2\varepsilon}\log(n)+n^{-\textcolor{black}{\alpha}}\right)\\
    &\lesssim \sum\limits_{n\in\mathbb{N}^{*}}|a_{n}|^{2}(n^{-\textcolor{black}{2(\alpha-1)}+3\varepsilon}+n^{-\textcolor{black}{(3\alpha-2)}+5\varepsilon}+n^{\textcolor{black}{(1-2\alpha)}-2r+2\varepsilon}).
    \end{split}
\end{equation}
Note that the limiting term is the last one, and since $r\in(\textcolor{black}{1/2-\alpha},0]$, we can choose $\varepsilon$ depending only on $r$ such that $r+\textcolor{black}{(\alpha-1/2)}-\varepsilon\geq 0$ and such that we have $(n^{-\textcolor{black}{2(\alpha-1)}+3\varepsilon}+n^{-\textcolor{black}{(3\alpha-2)}/2+5\varepsilon}+n^{\textcolor{black}{(1-2\alpha)}-2r+2\varepsilon})\leq 3$ (choose for instance $\varepsilon =\min(r+\textcolor{black}{(\alpha-1/2)},3/10)$). This means that, 
\begin{equation*}
\sum\limits_{p\in\mathbb{N}^{*}}p^{2r+2\varepsilon}\left|\sum\limits_{n\in\mathbb{N}^{*}\setminus\{p\}}\frac{a_{n}n^{-r}}{\lambda_{n}-\lambda_{p}+\lambda}\right|^{2}\lesssim \sum\limits_{n\in\mathbb{N}^{*}}|a_{n}|^{2},
\end{equation*}
and this ends the proof of Lemma \ref{lem4} in the case $r\in(\textcolor{black}{1/2-\alpha},0]$. Note that this could also work for $r\in(0,\textcolor{black}{(\alpha-1)/2})$. However, for $r \in [\textcolor{black}{(\alpha-1)/2}, \alpha-1/2)$, Lemma \ref{lem:tech3} cannot be used to get the fifth line of \eqref{eq:complem4}. Thus, we choose to treat the symmetrical cases $r\in(\textcolor{black}{1/2-\alpha},0]$ and $r\in(0,\textcolor{black}{\alpha-1/2})$.\\

Let us now assume that $r\in(0,\textcolor{black}{\alpha-1/2})$. As in the previous case, it suffices to show \eqref{eq:lem4goal}. Let us apply again \textcolor{black}{\eqref{eq:imaginary} and} Cauchy-Schwarz inequality on the left-hand side of \eqref{eq:lem4goal}, but in a slightly different way. We have \begin{equation*}
\sum\limits_{p\in\mathbb{N}^{*}}p^{2r+2\varepsilon}\left|\sum\limits_{n\in\mathbb{N}^{*}\setminus\{p\}}\frac{a_{n}n^{-r}}{\lambda_{n}-\lambda_{p}+\lambda}\right|^{2}\leq \sum\limits_{p\in\mathbb{N}^{*}}p^{2r+2\varepsilon}\left(\sum\limits_{n\in\mathbb{N}^{*}\setminus\{p\}}\frac{|a_{n}|^{2}}{|\lambda_{n}-\lambda_{p}|}\right)\left(\sum\limits_{n\in\mathbb{N}^{*}\setminus\{p\}}\frac{n^{-2r}}{|\lambda_{n}-\lambda_{p}|}\right).
\end{equation*}
Using again Lemma \ref{lem:tech3} since $-2r<\textcolor{black}{(\alpha-1)}$, then Fubini theorem, and then again Lemma \ref{lem:tech3} by choosing $\varepsilon>0$ such that {\color{black}$3\varepsilon<2(\alpha-1)$} and $2r-\textcolor{black}{\alpha}+2\varepsilon<\textcolor{black}{(\alpha-1)}$ (which always exists since $2r<\textcolor{black}{2\alpha-1}$), we have
\begin{equation*}
\begin{split}
    \;\;\;\; \sum\limits_{p\in\mathbb{N}^{*}}& p^{2r+2\varepsilon}\left|\sum\limits_{n\in\mathbb{N}^{*}\setminus\{p\}}\frac{a_{n}n^{-r}}{\lambda_{n}-\lambda_{p}+\lambda}\right|^{2} \\
    &\lesssim
    \sum\limits_{p\in\mathbb{N}^{*}}p^{2r+2\varepsilon}\left(\sum\limits_{n\in\mathbb{N}^{*}\setminus\{p\}}\frac{|a_{n}|^{2}}{|\lambda_{n}-\lambda_{p}|}\right)(p^{-\textcolor{black}{(\alpha-1)}-2r}\log(p)+p^{-\textcolor{black}{\alpha}})\\
    &\lesssim
    \sum\limits_{p\in\mathbb{N}^{*}}\left(\sum\limits_{n\in\mathbb{N}^{*}\setminus\{p\}}\frac{|a_{n}|^{2}}{|\lambda_{n}-\lambda_{p}|}\right)(p^{-\textcolor{black}{(\alpha-1)}+3\varepsilon}+p^{-\textcolor{black}{\alpha}+2r+2\varepsilon})\\
    &\lesssim
    \sum\limits_{n\in\mathbb{N}^{*}}|a_{n}|^{2}\left(\sum\limits_{p\in\mathbb{N}^{*}\setminus\{n\}}\frac{p^{-\textcolor{black}{(\alpha-1)}+3\varepsilon}+p^{-\textcolor{black}{\alpha}+2r+2\varepsilon}}{|\lambda_{n}-\lambda_{p}|}\right)\\
    &\lesssim
    \sum\limits_{n\in\mathbb{N}^{*}}|a_{n}|^{2}\left( n^{-2\textcolor{black}{(\alpha-1)}+3\varepsilon}\log(n)+n^{-\textcolor{black}{\alpha}}+n^{-\textcolor{black}{(2\alpha-1)}+2r+2\varepsilon}\log(n)\right)\\
    &\lesssim
    \sum\limits_{n\in\mathbb{N}^{*}}|a_{n}|^{2}\left( n^{-2\textcolor{black}{(\alpha-1)}+4\varepsilon}+n^{-\textcolor{black}{\alpha}}+n^{-\textcolor{black}{(2\alpha-1)}+2r+3\varepsilon}\right).
\end{split}
\end{equation*}
Then by choosing $\varepsilon$ such that $\varepsilon<\textcolor{black}{(\alpha-1)/2}$ and \textcolor{black}{$2r-\textcolor{black}{(2\alpha-1)}+3\varepsilon<0$ (which is possible since \textcolor{black}{$r\in(0,\alpha-1/2)$})},
there exists a constant $C>0$ (depending only on $r$) such that, 
\begin{equation*}
        \sum\limits_{p\in\mathbb{N}^{*}}p^{2r+2\varepsilon}\left|\sum\limits_{n\in\mathbb{N}^{*}\setminus\{p\}}\frac{a_{n}n^{-r}}{\lambda_{n}-\lambda_{p}+\lambda}\right|^{2}\leq C\sum\limits_{n\in\mathbb{N}^{*}}|a_{n}|^{2}.
\end{equation*}
{\color{black}Recalling the compact embedding property (see Section \ref{subsubsec-Simple-eigenv-case}) of the $\Hdiv^s$ spaces, this ends the proof of Lemma \ref{lem4}.}
\end{proof}

\subsection{Step (2): a Riesz basis for $H$}\label{subsec:step2}
In this section we prove the existence of a Riesz basis.
\begin{prop}
\label{qn:Riesz}
The family $(q_{n})_{n\in\mathbb{N}^{*}}$ is a Riesz basis of $\textcolor{black}{H}$.
\end{prop}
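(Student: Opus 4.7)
Showing that $(q_n)_{n\in\N^*}$ is a Riesz basis of $L^2$ is equivalent to showing that the operator $S:L^2\to L^2$ defined in \eqref{def-S} (in the case $r=0$) is an isomorphism, since $(\varphi_n)$ is an orthonormal basis and $q_n=S\varphi_n$. By Proposition~\ref{S:fredholm}, $S=\lambda^{-1}\mathrm{Id}+S_c$ with $S_c$ compact, so $S$ is Fredholm of index zero and, by the Fredholm alternative, it suffices to prove $\ker S=\{0\}$. The skew-adjointness of $\LL$ forces $\overline{\lambda_n}=-\lambda_n$, so the matrix coefficients $\langle S\varphi_n,\varphi_m\rangle_{L^2}=(\lambda_n-\lambda_m+\lambda)^{-1}$ satisfy $\overline{\langle S\varphi_n,\varphi_m\rangle_{L^2}}=\langle S\varphi_m,\varphi_n\rangle_{L^2}$, making $S$ self-adjoint on $L^2$. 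The Hilbert space duality $\ker S=(\mathrm{Range}\,S^{*})^\perp$ (with $S^{*}=S$ here) identifies the desired $\omega$-independence of $(q_n)$ with the density of $(q_n)$ in $L^2$, the two dual formulations of injectivity.

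The heart of the argument is the Laplace representation, valid because $\lambda_n=-i\mu_n$ with $\mu_n\in\R$:
\[
\frac{1}{\lambda_n-\lambda_m+\lambda}=\frac{1}{\lambda-i(\mu_n-\mu_m)}=\int_0^{+\infty}e^{-\lambda t}e^{i(\mu_n-\mu_m)t}\,dt.
\]
Plugging this into $\langle Sa,a\rangle_{L^2}$ — first for a finite sum $a=\sum_{n\leq N}a_n\varphi_n$ and then extended to arbitrary $a\in L^2$ by density, using the boundedness of $S$ on $L^2$ established in Remark~\ref{remark:rightbound} — yields
\[
\langle Sa,a\rangle_{L^2}=\int_0^{+\infty}e^{-\lambda t}\bigl|F_a(t)\bigr|^2\,dt,\qquad F_a(t):=\sum_n a_n e^{i\mu_n t}\in L^2\!\left((0,\infty),\,e^{-\lambda t}dt\right).
\]
The key point for the extension is that the partial sums $F_{a,N}$ form a Cauchy sequence in $L^2((0,\infty),e^{-\lambda t}dt)$ since $\|F_{a,N}-F_{a,M}\|^2=\langle S(a_N-a_M),a_N-a_M\rangle_{L^2}\leq \|S\|\,\|a_N-a_M\|_{L^2}^2\to 0$.

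If $Sa=0$, then $\langle Sa,a\rangle=0$, so $F_a\equiv 0$ a.e.\ on $(0,\infty)$. Restricting to a bounded interval $(0,T)$ with $T$ chosen larger than $\pi/\inf_n(\mu_{n+1}-\mu_n)$ — this infimum is strictly positive since $\mu_{n+1}-\mu_n\gtrsim n^{1/2}$ by Lemma~\ref{lem1} — and applying an Ingham-type inequality for the distinct frequencies $(\mu_n)$ (the same Haraux refinement that underlies the controllability statement of Proposition~\ref{prop:contfrollabilitylinear}) yields $\sum_n|a_n|^2\lesssim \|F_a\|_{L^2(0,T)}^2=0$, hence $a=0$. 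Combined with the Fredholm property, this proves that $S$ is an isomorphism of $L^2$, so $(q_n)=(S\varphi_n)$ is a Riesz basis of $L^2$, the upper bound being already provided by Remark~\ref{remark:rightbound}.

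The principal obstacle is the rigorous passage from finite trigonometric sums to general $a\in\ell^2$ in the Laplace identity; it is handled by the continuity of $S$ on $L^2$ together with the Cauchy argument above. The Ingham-type inequality, which relies critically on the distinctness and separation of the frequencies $(\mu_n)$, then closes the argument by turning the vanishing of the continuous norm $\|F_a\|_{L^2(e^{-\lambda t}dt)}$ into the vanishing of every coefficient $a_n$.
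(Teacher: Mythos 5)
Your proof is correct, but it follows a genuinely different route from the paper. The paper also reduces the statement to $\ker S=\{0\}$ via the Fredholm decomposition of Proposition~\ref{S:fredholm}, but it then establishes $\omega$-independence through Lemma~\ref{lem:L2infqnbarqn}: a dichotomy (the family $(q_n)$ is either $\omega$-independent or dense, proved in Appendix~\ref{sec:app:lemme410} by iterating the resolvent $\mathcal{A}_\lambda$ on the relation $\sum c_nq_n=0$ and using a holomorphic generating function $\sum c_nr_ne^{r_nz}$), combined with a duality between $\omega$-independence of $(q_n)$ and density of $(\overline{q_n})$. You instead exploit the skew-adjointness quantitatively: writing $\lambda_n=-i\mu_n$ and using the Laplace representation $(\lambda-i(\mu_n-\mu_m))^{-1}=\int_0^\infty e^{-\lambda t}e^{i(\mu_n-\mu_m)t}\,dt$, you obtain the positive-definiteness identity $\langle Sa,a\rangle_{L^2}=\int_0^\infty e^{-\lambda t}|\sum_na_ne^{i\mu_nt}|^2dt$ (correctly extended from finite sums by the Cauchy argument based on the boundedness of $S$), and then the Ingham--Haraux inequality, legitimate here because $\mu_{n+1}-\mu_n\gtrsim n^{1/2}$ so the asymptotic gap is infinite, turns the vanishing of this integral into $a=0$. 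What your approach buys is a quantitative coercivity $\langle Sa,a\rangle\gtrsim\|a\|_{L^2}^2$; since $S$ is indeed self-adjoint (your matrix-symmetry computation is valid), this in fact yields invertibility of $S$ directly, so the Fredholm alternative is optional for this particular proposition, and the argument connects pleasantly with the Ingham machinery already used for controllability in Appendix~\ref{sec:controllability_proof}. What the paper's softer argument buys is independence from any spectral-gap requirement at this stage (only distinctness of the eigenvalues and $r_n\to0$ are used) and, more importantly, a duality formulation that is reused verbatim in Step~(3) to extend the Riesz basis property to $H^r$ with $r<0$, where your quadratic-form identity is no longer the natural tool. Two cosmetic points: Haraux's Theorem~\ref{thm:Haraux} is stated for $\Z$-indexed families, so one should extend $(\mu_n)_{n\in\N^*}$ (e.g.\ by $\mu_{-n}=-\mu_n$, $\mu_0=0$) or invoke the obvious variant for one-sided families; and since the asymptotic gap tends to infinity, any interval length $T>0$ works, so your specific threshold $\pi/\inf_n(\mu_{n+1}-\mu_n)$ is unnecessary (and off by the classical factor $2\pi$), though harmless.
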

Showing this amounts to showing that $S${\color{black}, defined by \eqref{def-S},} is an isomorphism from $\textcolor{black}{H}$ to $\textcolor{black}{H}$. Since we know from Proposition \ref{S:fredholm} that $S$ is a Fredholm operator (of index 0), it suffices to show that {\color{black} $ker(S)=\{0\}$. } This is equivalent to say that for any $(a_{n})_{n\in\mathbb{N}^{*}}\in l^{2}$, such that,
\begin{equation}
\label{omega-indep0}
\sum\limits_{n\in\mathbb{N}^{*}}a_{n}q_{n} =0,
\end{equation}
one has $a_{n} =0$, for all $n\in\mathbb{N}^{*}$. In other words this is equivalent to show that $(q_n)_{n\in\mathbb{N}^*}$ is $\omega$-independent in $\textcolor{black}{H}$ (see Definition \ref{def-riesz-bas}). Notice that, 
\begin{equation*}
    q_n= \sum_{p}\frac{\varphi_p}{\lambda_n- \lambda_p+\lambda} \; \textrm{ and }  \;  \overline{q_n}= \sum_{p}\frac{{\color{black}\overline{\varphi_p}}}{\lambda_p- \lambda_n+\lambda}.
\end{equation*}
Thus we have the following Lemma:
\begin{lem}\label{lem:L2infqnbarqn}
The sequences $(q_n)_{n\in\mathbb{N}^{*}}$ and $(\overline{q_n})_{n\in\mathbb{N}^{*}}$ satisfy the following:
\begin{itemize}
    \item[(i)] $(q_n)_{n\in\mathbb{N}^{*}}$  \textcolor{black}{is} $\omega$-independent in $\textcolor{black}{H}$ or $\textcolor{black}{H}$-dense.
     \item[(ii)] $(\overline{q_n})_{n\in\mathbb{N}^{*}}$ \textcolor{black}{is} $\omega$-independent in $\textcolor{black}{H}$ or $\textcolor{black}{H}$-dense.
    \item[(iii)] $(q_n)_{n\in\mathbb{N}^{*}}$ is $\omega$-independent in $\textcolor{black}{H}$ $\Longleftrightarrow$  $(\overline{q_n})_{n\in\mathbb{N}^{*}}$ is $\omega$-independent in $\textcolor{black}{H}$.
     \item[(iv)] $(q_n)_{n\in\mathbb{N}^{*}}$ is  $\textcolor{black}{H}$-dense $\Longleftrightarrow$  $(\overline{q_n})_{n\in\mathbb{N}^{*}}$ is $\textcolor{black}{H}$-dense.
      \item[(v)] $(q_n)_{n\in\mathbb{N}^{*}}$ is  $\textcolor{black}{H}$-dense $\Longleftrightarrow$  $(\overline{q_n})_{n\in\mathbb{N}^{*}}$ is $\omega$-independent in $\textcolor{black}{H}$.
       \item[(vi)] $(\overline{q_n})_{n\in\mathbb{N}^{*}}$ is  $\textcolor{black}{H}$-dense $\Longleftrightarrow$  $(q_n)_{n\in\mathbb{N}^{*}}$ is $\omega$-independent in $\textcolor{black}{H}$.
\end{itemize}
Consequently, we know that $\textcolor{black}{(q_n)}_{n\in \N^*}$ ($resp.$ $\textcolor{black}{(\overline{q_n})}_{n\in \N^*}$) is both $\omega$-independent in $\textcolor{black}{H}$ and $\textcolor{black}{H}$\textcolor{black}{-}dense. 
\end{lem}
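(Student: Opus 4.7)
The plan is to translate the six statements into properties of the kernels of two operators on $L^2$ which, crucially, are both self-adjoint. I introduce the operator $S\colon L^2\to L^2$ from \eqref{def-S} with $r=0$, sending $\varphi_n\mapsto q_n$ (bounded by Remark \ref{remark:rightbound}), together with its companion $\tilde S$ defined by $\tilde S\varphi_n=\overline{q_n}$. A direct matrix-element computation gives
\[
(S\varphi_n,\varphi_m)_{L^2}=\frac{1}{\lambda_n-\lambda_m+\lambda},
\]
and since $\lambda_k$ is purely imaginary one has $\overline{\lambda_k}=-\lambda_k$, which forces $\overline{(S\varphi_m,\varphi_n)}=(S\varphi_n,\varphi_m)$. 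Hence $S=S^*$, and the identical computation gives $\tilde S=\tilde S^*$. Writing $C$ for the anti-linear involution $u\mapsto\bar u$, one checks $\tilde S=C\circ S\circ C$, so $C$ induces an anti-linear bijection between $\ker S$ and $\ker\tilde S$; in particular $\dim\ker S=\dim\ker\tilde S$.

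With this structure in place the bulk of the lemma becomes bookkeeping. By definition $(q_n)$ is $\omega$-independent in $L^2$ iff $\ker S=\{0\}$, while it is $L^2$-dense iff $\overline{R(S)}=L^2$, i.e.\ iff $\ker S^{*}=\{0\}$; self-adjointness collapses these two conditions to the single requirement $\ker S=\{0\}$. The analogous reduction for $(\overline{q_n})$ via $\tilde S$ gives $\ker\tilde S=\{0\}$, and the equality $\dim\ker S=\dim\ker\tilde S$ unites the four properties into a single equivalence class. Items (iii)--(vi) are then tautological equivalences inside this class, whereas (i) and (ii), being disjunctions of two equivalent conditions, each reduce to the single assertion $\ker S=\{0\}$.

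The substantive content of the lemma is therefore to prove $\ker S=\{0\}$, and I expect this to be the main obstacle. My plan is to combine the Fredholm decomposition $S=\tfrac{1}{\lambda}\id+S_c$ of Proposition \ref{S:fredholm} (with $S_c$ compact) with the self-adjointness just established. Fredholm theory gives $\dim\ker S=\dim\ker S^{*}<\infty$ and closed range; self-adjointness makes these kernels coincide; and the equivalences (v)--(vi) show that proving $L^2$-density of either sequence suffices. A natural route is a contradiction argument: if $(a_n)\in\ker S\setminus\{0\}$, the meromorphic function
\[
F(z)=\sum_{n\in\N^{*}}\frac{a_n}{\lambda_n+\lambda-z}
\]
is analytic across the imaginary axis (every pole has real part $\lambda>0$) and vanishes at each $\lambda_p$; combining the $\ell^{2}$ summability of $(a_n)$ with this infinite zero set along the purely imaginary eigenvalues should force $F\equiv 0$, hence $a=0$. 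Once $\ker S=\{0\}$ is in hand, the final ``Consequently'' sentence is immediate: (i) delivers at least one of the two equivalent $(q_n)$-properties, hence both, and (iii)--(iv) transfer the same conclusion to $(\overline{q_n})$.
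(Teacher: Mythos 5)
Your operator-theoretic reframing is correct and genuinely different in spirit from the paper for items (iii)--(vi): with $r=0$ the matrix elements $\langle q_n,\varphi_m\rangle_{L^2}=1/(\lambda_n-\lambda_m+\lambda)$ are Hermitian-symmetric because the $\lambda_k$ are purely imaginary, so $S$ (and $\tilde S=C S C$) is indeed a bounded self-adjoint operator on $L^2$, and the identifications ``$\omega$-independent $\Leftrightarrow\ker S=\{0\}$'' and ``dense $\Leftrightarrow\ker S^{*}=\{0\}$'' are right; the paper instead gets (v)--(vi) by the explicit sequence computation $\langle\overline{q_n},\overline a\rangle_{L^2}=\sum_p a_p/(\lambda_p-\lambda_n+\lambda)$ and proves (i) as a separate dichotomy. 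But notice what your collapse costs: once all four properties are equivalent to the single condition $\ker S=\{0\}$, items (i)--(ii) are no longer soft disjunctions that can be proved without deciding which horn holds — they, together with the final ``Consequently'' sentence, \emph{are} the statement $\ker S=\{0\}$, so your entire proof rests on the last step of your sketch.

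That step is not proved, and as formulated it would fail. You assert that $F(z)=\sum_n a_n/(\lambda_n+\lambda-z)$ must vanish identically because it is analytic across the imaginary axis and vanishes at every $\lambda_p$; but the zeros $\lambda_p\sim -ip^{3/2}$ are so sparse that $\sum_p \lambda/(1+|\lambda_p|^{2})<+\infty$, i.e.\ they satisfy the Blaschke condition in the half-plane $\{\Re z<\lambda\}$ on which $F$ is bounded and analytic, so there exist nonzero bounded analytic functions vanishing exactly at these points; no zero-counting or growth-based uniqueness theorem applies, and you never exploit the special Cauchy-transform structure of $F$. Excluding such an $F$ is precisely the hard content of the lemma, not a routine consequence of $\ell^2$ summability. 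The paper resolves it by a different mechanism: Appendix D proves the dichotomy (i) (``not $\omega$-independent $\Rightarrow$ dense'') by iterating the resolvent $\mathcal{A}_\lambda=(A-\lambda/2)^{-1}$ against the identity $(A-\lambda-\lambda_n)q_n=h$, which reduces matters to entire functions $\sum_n d_n r_n e^{r_n z}$, $r_n=(\lambda_n+\lambda/2)^{-1}$, all of whose derivatives vanish at $0$, and then the duality (v)--(vi) converts density back into $\omega$-independence. To complete your proposal you would either have to import that argument (which, in your framework, is exactly the missing proof that $\ker S=\{0\}$) or give an actual uniqueness proof for your $F$; as written, the decisive step is only an expectation.
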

\begin{proof}[Proof of Proposition \ref{qn:Riesz}]
The proof of Proposition \ref{qn:Riesz} is equivalent to prove \eqref{omega-indep0} implies $a_n=0, \forall n\in \N^*$. But since from Lemma \ref{lem:L2infqnbarqn}, $(q_n)_{n\in\mathbb{N}^{*}}$ is $\omega$-independent in $\textcolor{black}{H}$, we conclude directly that $a_n=0, \forall n\in \N^*$, hence the proof. 
\end{proof}

Hence, it remains to prove Lemma \ref{lem:L2infqnbarqn}.

\begin{proof}[Proof of Lemma \ref{lem:L2infqnbarqn}]
The relations $(iii)$ and $(iv)$ are direct consequences of  the conjugacy of $q_n$ and $\overline{q_n}$. We only focus on the proof of $(i)$ and $(vi)$, as $(ii)$ and $(v)$ can be treated similarly. \\

The proof of $(vi)$ is further separated by two parts:
\begin{align*}
   \textrm{ $(\overline{q_n})_{n\in\mathbb{N}^{*}}$ is  not $\textcolor{black}{H}$-dense $\Longrightarrow$  $(q_n)_{n\in\mathbb{N}^{*}}$ is not $\omega$-independent in $\textcolor{black}{H}$, } \\
    \textrm{ $(\overline{q_n})_{n\in\mathbb{N}^{*}}$ is not $\textcolor{black}{H}$-dense $\Longleftarrow$  $(q_n)_{n\in\mathbb{N}^{*}}$ is not $\omega$-independent in $\textcolor{black}{H}$. }
\end{align*}

On the one hand, suppose that $(\overline{q_n})$ is not  $\textcolor{black}{H}$-dense, then there exists some {\it nontrivial} function $\overline a=\sum_{p\in\N^*} {\color{black}\overline{a_p} \overline{\varphi_p}}\in \textcolor{black}{H}$, namely $(\overline{a_p})_{p\in\mathbb{N}^{*}}\in l^2$, such that 
\begin{gather*}
    \langle\overline{q_n}, \overline a\rangle_{\textcolor{black}{H}}= 0, \;  \forall n\in \mathbb{N}^{*},
\end{gather*}
which is equivalent to 
\begin{equation*}
    \sum_{p\in \N^*}  \frac{ a_p}{\lambda_p- \lambda_n+ \lambda}=0, \;  \forall n\in \mathbb{N}^{*}.
\end{equation*}
Thus 
\begin{equation*}
    \sum_{p\in \N^*}  a_p q_p= \sum_{p\in\mathbb{N}^{*}} a_p\sum_{m\in\mathbb{N}^{*}} \frac{\varphi_m}{\lambda_p- \lambda_m+ \lambda}=  \sum_{m\in\mathbb{N}^{*}}\varphi_m\sum_{p\in\mathbb{N}^{*}} \frac{ a_p}{\lambda_p- \lambda_m+ \lambda}=0,
\end{equation*}
and consequently $(q_p)_{p\in\mathbb{N^{*}}}$ is not $\omega$-independent in $\textcolor{black}{H}$.\\ 

 On the other hand, suppose that  $(q_n)_{n\in\mathbb{N}^{*}}$ is not $\omega$-independent in $\textcolor{black}{H}$, then there exists some {\it nontrivial} sequence $(a_p)_{p\in\mathbb{N}^{*}}\in l^2$, namely $a= \sum_p a_p\varphi_p\in \textcolor{black}{H}$, such that 
 \begin{equation*}
    0= \sum_{p\in \N^*}  a_p q_p= \sum_{p\in\mathbb{N}^{*}} a_p\sum_{m\in\mathbb{N}^{*}} \frac{\varphi_m}{\lambda_p- \lambda_m+ \lambda}=  \sum_{m\in\mathbb{N}^{*}}\varphi_m\sum_{p\in\mathbb{N}^{*}} \frac{ a_p}{\lambda_p- \lambda_m+ \lambda}.
\end{equation*}
Since $(\varphi_{m})_{m\in\mathbb{N}^{*}}$ is a basis of $\textcolor{black}{H}$, this implies that 
\begin{equation*}
    \sum_{p} \frac{ a_p}{\lambda_p- \lambda_n+ \lambda}= 0, \; \forall n\in \N^*,
\end{equation*}
which is equivalent to
\begin{gather*}
    \left(\overline{q_n}, \overline a\right)_{\textcolor{black}{H}}= 0, \;  \forall n\in \mathbb{N}^{*}.
\end{gather*}
Hence the sequence $(\overline{q_n})_{n\in\mathbb{N}^*}$ is not $\textcolor{black}{H}$-dense. This ends the proof of the property $(vi)$. \\

Finally, we turn to the property $(i)$. The proof of this point is now classical (see for instance \cite{Gagnon-Hayat-Xiang-Zhang}) and we put it in Appendix \ref{sec:app:lemme410} for readers' convenience. We however underline that the proof in Appendix \ref{sec:app:lemme410} does not require that the family $(q_n)_{n\in\mathbb{N}^*}$ is quadratically close.
\end{proof}

\subsection{Step (3): extending the Riesz basis property to a sharp range of Sobolev spaces}\label{subsec:step3}
We are now going to use Proposition \ref{qn:Riesz} as well as Proposition \ref{S:fredholm} to show the following.
\begin{prop}
\label{qnr:Riesz}
For any $r\in(\textcolor{black}{1/2-\alpha,\alpha-1/2})$, the family $(n^{-r}q_{n})_{n\in\mathbb{N}^{*}}$ is a Riesz basis of $\Hdiv^{r}$.
\end{prop}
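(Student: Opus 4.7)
The plan is to deduce Proposition \ref{qnr:Riesz} from the Fredholm-index-zero property of $S$ on $H^r$ (Proposition \ref{S:fredholm}), together with either injectivity or surjectivity, by splitting on the sign of $r$. Since $(n^{-r}\varphi_n)_{n}$ is an orthonormal basis of $H^r$, the conclusion is equivalent to $S$ being an isomorphism of $H^r$; being Fredholm of index zero, it suffices to produce either injectivity or surjectivity.

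For $r\in [0,1)$ I would reduce directly to the $L^2$ case already handled. If $f\in \ker S\subset H^r$, the embedding $H^r\hookrightarrow L^2$ ensures $f\in L^2$ and that the series defining $Sf$ vanishes also in $L^2$; the $\omega$-independence of $(q_n)_{n}$ in $L^2$, established in Lemma \ref{lem:L2infqnbarqn} and used for Proposition \ref{qn:Riesz}, forces all coefficients of $f$ to vanish, yielding the injectivity, hence the Riesz basis property on $H^r$.

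For $r\in(-1,0)$ I would argue by duality, as hinted in Section \ref{sec:strategyandoutline}. Since $\mathscr{L}$ is skew-adjoint, $\overline{\lambda_n}=-\lambda_n$ and $\overline{q_n}=\sum_p \varphi_p/(\lambda_p-\lambda_n+\lambda)$. All the estimates of Section \ref{sec:techlem} being symmetric in their indices, the analysis of Section \ref{sec:proof} carries over verbatim to the conjugate family: the companion operator $\tilde S:n^s\varphi_n\mapsto n^s\overline{q_n}$ is Fredholm of index $0$ on $H^s$ for $s\in(-1,1)$ and $(\overline{q_n})_n$ is a Riesz basis of $L^2$. The case $r\in[0,1)$ just treated, applied to $\tilde S$ at $s:=-r\in(0,1)$, then gives that $(n^r\overline{q_n})_n$ is a Riesz basis of $H^{-r}$, hence in particular $\omega$-independent there.

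The crux of the proof is the duality step: I would translate this $\omega$-independence of $(n^r\overline{q_n})_n$ in $H^{-r}$ into density of $(n^{-r}q_n)_n$ in $H^r$, after which the Fredholm-index-$0$ property of $S$ (which supplies closed range) upgrades density to surjectivity, hence to an isomorphism. Under the identification $(H^r)^*\simeq H^{-r}$ via the $L^2$ pairing, a failure of density would yield $g=\sum_p c_p\varphi_p\in H^{-r}$, $g\neq 0$, with $\sum_p \overline{c_p}/(\lambda_n-\lambda_p+\lambda)=0$ for every $n$; setting $d_p:=\overline{c_p}\,p^{-r}\in\ell^2$ and relabeling the dummy indices, this becomes $\sum_n d_n\, n^r/(\lambda_p-\lambda_n+\lambda)=0$ for all $p$, which is exactly the statement that $\sum_n d_n\, n^r\overline{q_n}=0$ in $H^{-r}$, contradicting $\omega$-independence. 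I expect the main technical obstacle to lie precisely here: one must justify the interchange of summations when pairing $g\in H^{-r}$ against $n^{-r}q_n\in H^r$, which rests on the $\ell^2$ bounds underlying Lemma \ref{lem4} and Remark \ref{remark:rightbound}, and the change of variables $d_p=\overline{c_p}\,p^{-r}$ must be matched precisely to the $H^r/H^{-r}$ duality so that the ``failure of density'' and ``failure of $\omega$-independence'' defects line up.
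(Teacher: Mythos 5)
Your proposal is correct, and its skeleton coincides with the paper's: reduce the Riesz basis property to $S$ being an isomorphism of $H^r$, use that $S$ is Fredholm of index $0$ (Proposition \ref{S:fredholm}), treat $r\in[0,1)$ by pushing a kernel element of $S$ into $L^2$ and invoking the $\omega$-independence of $(q_n)_n$ there, and treat $r\in(-1,0)$ by a duality argument with $H^{-r}$. The difference lies in how the negative case is implemented. The paper proves injectivity directly: assuming a nontrivial relation $\sum a_n n^{-r}q_n=0$ in $H^r$, it forms $h=\sum a_n n^{r}\varphi_n\in H^{-r}\setminus\{0\}$ and contradicts the projected identities \eqref{eq:scalarp} by pairing $h$ against the family $(n^{r} q_n)_n$, which is dense in $H^{-r}$ by the positive-exponent case; no conjugate family is needed because the sesquilinear $H^{-r}$ inner product together with the purely imaginary eigenvalues produces the required sign flip inside the computation. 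You instead prove surjectivity: density of $(n^{-r}q_n)_n$ in $H^r$ plus the closed range supplied by Fredholmness gives ontoness, and index $0$ then gives injectivity; the dual obstruction you must exclude is a nontrivial $\ell^2$ relation among $(n^{r}\overline{q_n})_n$ in $H^{-r}$, so you first have to upgrade the conjugate family to a Riesz basis of $H^{-r}$ by re-running Steps (1)--(2) for $\overline{q_n}$. That is legitimate: all estimates (Lemmas \ref{lem:tech3} and \ref{lem4}) depend only on $|\lambda_n-\lambda_p+\lambda|=|\lambda_p-\lambda_n+\lambda|$, Lemma \ref{lem:L2infqnbarqn} already records the $L^2$ statements for $\overline{q_n}$, and the interchange of summations you flag is covered by the same bound as in Remark \ref{remark:rightbound}. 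So your route is sound; it costs a routine duplication of the Fredholm/Riesz analysis for the conjugate family, which the paper's direct kernel argument avoids, but in exchange it makes the density/$\omega$-independence duality fully explicit on the cokernel side.
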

\begin{proof}[Proof of Proposition \ref{qnr:Riesz}]
 First of all, note, given the definition of $S$ in  \eqref{def-S}, that showing Proposition \ref{qnr:Riesz} is equivalent to showing that $S$ is an isomorphism from $\Hdiv^{r}$ to itself. Since we know, from Proposition \ref{S:fredholm}, that $S$ is a Fredholm operator (of index 0) from $\Hdiv^{r}$ to itself, it is enough to show that $ker(S)=\{0\}$ (where now $ker(S)$ is now a subset of $\Hdiv^{r}$). 
As previously, this is equivalent to show that $(n^{-r}q_{n})_{n\in\mathbb{N}^{*}}$ is $\omega$-independent in $\Hdiv^{r}$ in the sense of Definition \ref{def-riesz-bas}, $e. g.$ for any $(a_{n})_{n\in\mathbb{N}^{*}}\in l^{2}$, such that
\begin{equation}
\label{omega-indepr}
\sum\limits_{n\in\mathbb{N}^{*}}a_{n} n^{-r} q_{n} =0,
\end{equation}
one has $a_n=0$, for all $n\in \N^*$.
\\

The case $r=0$ is treated in Proposition \ref{qn:Riesz}, so we now assume $r\neq0$. We consider two different cases:
\begin{itemize}
    \item \textbf{Case (1) $r>0$.} Let $(a_{n})_{n\in\mathbb{N}^{*}}\in l^{2}$, and assume that \eqref{omega-indepr} holds, we can set $c_{n} = (a_{n}n^{-r})_{n\in\mathbb{N}^{*}}\in l^{2}$, and \eqref{omega-indepr} becomes
    \begin{equation}
    \label{eq:qnindepc}
        \sum\limits_{n\in\mathbb{N}^{*}}c_{n}q_{n} =0.
    \end{equation}
    Since $(q_{n})_{n\in\mathbb{N}^{*}}$ is a Riesz basis from Proposition \ref{qn:Riesz}, we have that $(q_{n})_{n\in\mathbb{N}^{*}}$ is $\omega$-independent in $\textcolor{black}{H}$ (see in particular Lemma \ref{lem:L2infqnbarqn}) and therefore \eqref{eq:qnindepc} implies that
    \begin{equation*}
        c_{n} = 0,\quad \forall n\in\mathbb{N}^{*},
    \end{equation*}
    which implies that $a_{n} = 0$ for all $n\in\mathbb{N}^{*}$. Thus $(n^{-r}q_{n})_{n\in\mathbb{N}^{*}}$ is $\omega$-independent, hence  $ker(S)=\{0\}$ (in $\Hdiv^{r}$) and this ends the proof.
    \begin{remark}
    The argument above is summarized by the following: if $r>0$, $ker(S)$ seen as a subset of $\Hdiv^{r}$ is included in $ker(S)$ seen as a subset of $\textcolor{black}{H}$, which is $\{0\}$ from Proposition \ref{qn:Riesz}.
    \end{remark}
    
    \item \textbf{Case (2) $r<0$}. This is the more {\color{black}difficult} 
    case as $ker(S)$, seen as a subset of $\Hdiv^{r}$, is not anymore included in $ker(S)$ seen as a subset of $\textcolor{black}{H}$ (but rather the opposite holds). To show this, we proceed by contradiction and use a dual argument between $\omega$-independence in $\Hdiv^{r}$ and density in $\Hdiv^{-r}$. \\ Let's assume by contradiction that $(n^{-r}q_{n})_{\omega\in\mathbb{N}^{*}}$ is not $\omega$-independent in $\Hdiv^r$. Then there exists some nontrivial  $(a_{n})_{n\in\mathbb{N}^{*}}\in l^{2}$  such that \eqref{omega-indepr} holds.
    Projecting on $(m^{-r}\varphi_{m})_{m\in\mathbb{N}^{*}}$, we have
    \begin{equation}
    \label{eq:scalarp}
    \begin{split}
        0 &= \left(\sum\limits_{n\in\mathbb{N}^{*}}a_{n}n^{-r}\left(\sum\limits_{p\in\mathbb{N^{*}}}\frac{p^{-r}p^{r}\varphi_{p}}{\lambda_{n}-\lambda_{p}+\lambda}\right),  m^{-r}\varphi_{m}\right)_{\Hdiv^{r}}\\
         &= \sum\limits_{n\in\mathbb{N}^{*}}\frac{a_{n}n^{-r}m^{r}}{\lambda_{n}-\lambda_{m}+\lambda},\;\;\forall\;m\in\mathbb{N}^{*}.
        \end{split}
    \end{equation}
    Now, let us set $h = \sum\limits_{n\in\mathbb{N}^{*}} a_{n} n^{r}\varphi_{n}$ which belongs to $\Hdiv^{-r}$ since $(a_{n})_{n\in\mathbb{N}^{*}}\in l^{2}$. From assumption, $a_{n}$ is not identically 0 thus $h\neq 0$ since $n^{r}\varphi_{n}$ is a basis of $\Hdiv^{-r}$. Since $r<0$, $(-r)>0$ and therefore, from Case (1), $(n^{r}q_{n})_{n\in\mathbb{N}^{*}}$ is a Riesz basis of $\Hdiv^{-r}$. In particular, this implies that $(n^{r}q_{n})_{n\in\mathbb{N}^{*}}$ is a dense family in $\Hdiv^{-r}$ and there exists $m_{0}\in\mathbb{N}^{*}$ such that 
    \begin{equation*}
        \langle h, m_{0}^{r}q_{m_{0}}\rangle_{\Hdiv^{-r}}\neq 0,
    \end{equation*}
    Expending the expression of $h$ and $q_{m_{0}}$, using the fact that the $\lambda_{i}$ are imaginary, this means
    \begin{equation*}
    \sum\limits_{n\in\mathbb{N}^{*}} a_{n}m_{0}^{r}\frac{n^{-r}}{\lambda_{n}-\lambda_{m_{0}}+\lambda} \neq 0,
    \end{equation*}
    which is in contradiction with \eqref{eq:scalarp}. Thus $(a_{n})_{n\in\mathbb{N}^{*}}$ is identically 0 and $(n^{-r}q_{n})_{n\in\mathbb{N}^{*}}$ is $\omega$-independant in $\Hdiv^r$, which means that $ker(S)$ (seen as a subset of $\Hdiv^{r}$) is reduced to $\{0\}$ and $S$ is an isomorphism from $\Hdiv^{r}$ to $\Hdiv^{r}$. This ends the proof of Proposition \ref{qnr:Riesz}.
\end{itemize}
\end{proof}

\section{Construction of the isomorphism-feedback pair}
\label{sec:proof2}
In this section, we construct the isomorphism-feedback pair to end the proof of Proposition \ref{prop:th}. In the previous section, we have established some important properties, namely that specific families of functions form Riesz basis in appropriate spaces. Accordingly, this allowed us to define a general isomorphism, given by $S$, using the first backstepping equality in \eqref{op-eq00}. We now use the second one to propose a feedback law. Then, using the Riesz basis properties and the isomorphism $S$, we prove that the corresponding transformation $T$ is an isomorphism in an array of spaces.  This will end the proof of Proposition \ref{prop:th} and Theorem \ref{th1}. It will then remain to prove that the closed-loop system given by the feedback is well-posed to have Corollary \ref{coro:expostab}.

\subsection{Step (4): construction and basic properties of the stabilizing feedback}
Let us now provide a candidate for $(K_{n})_{n\in\mathbb{N}^{*}}$. Recall that our goal is to have $(K_{n})_{n\in\mathbb{N}^{*}}$ such that the $TB=B$ condition holds in some sense. Expressing this condition on the basis $\varphi_{n}$, it becomes
\begin{equation}
\label{eq:tbbf0}
    \sum\limits_{n\in\mathbb{N}^{*}}(-K_{n}b_{n})\tau q_{n}=\sum\limits_{n\in\mathbb{N}^{*}}b_{n}\varphi_{n}.
\end{equation}
Note that the right-hand side belongs to $\Hdiv^{-\textcolor{black}{\alpha/2}}$ and in fact, given \eqref{condB}, it even belongs to $\Hdiv^{-\textcolor{black}{\frac{1}{2}}-\varepsilon}$ for any {\color{black}$\varepsilon >0$}. From Proposition \ref{qnr:Riesz} and since $\tau : \Hdiv^r \rightarrow \Hdiv^r$ is an isomorphism for any $r\in \R$, we have that  $(n^{\textcolor{black}{-r}}\tau q_{n})_{n\in\mathbb{N}^{*}}$ is a Riesz basis of $\Hdiv^{\textcolor{black}{r}}$ for {\color{black} $r\in (\frac{1}{2}-\alpha,\alpha-\frac{1}{2})$. Since $\alpha>1$, we are able to give a meaning to \eqref{eq:tbbf0} in $\Hdiv^r$ for $r\in (\frac{1}{2}-\alpha,-\frac{1}{2})$, or equivalently in $\Hdiv^{-\frac{1}{2}-\varepsilon}$ for $\varepsilon \in (0,\alpha-1)$, thanks to Proposition \ref{qnr:Riesz}.} \textcolor{black}{This leads to the following lemma:}
\begin{lem}
\label{lem:defk}
There exists a unique sequence $(-K_{n})_{n\in\mathbb{N}^{*}}$ such that for any {\color{black}$\varepsilon\in(0,\alpha-1)$} the condition \eqref{eq:tbbf0} holds in $\Hdiv^{-\textcolor{black}{\frac{1}{2}}-\varepsilon}$ and,
\begin{equation*}
((-K_{n}b_{n})n^{-\textcolor{black}{\frac{1}{2}}-\varepsilon})_{n\in\mathbb{N}^{*}}\in l^{2}.   
\end{equation*}
\end{lem}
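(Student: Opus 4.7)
The plan is to prove Lemma \ref{lem:defk} as a direct consequence of the Riesz basis property established in Proposition \ref{qnr:Riesz} in the space $H^{-1/2-\varepsilon}$, combined with the observation that the multiplier $\tau$ is an isomorphism on every $H^r$. Under Assumption \ref{asump1} the coefficients $b_n$ are bounded above and below by positive constants, so $B=\sum_{n\in\mathbb{N}^*} b_n\varphi_n$ indeed belongs to $H^{-1/2-\varepsilon}$ for every $\varepsilon\in(0,1/2)$, and $\tau:H^r\to H^r$ is bijective with bounded inverse for every $r\in\mathbb{R}$.

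I would then take $r=-1/2-\varepsilon\in(-1,-1/2)$ in Proposition \ref{qnr:Riesz} to obtain that $(n^{1/2+\varepsilon}q_n)_{n\in\mathbb{N}^*}$ is a Riesz basis of $H^{-1/2-\varepsilon}$, and transporting this basis through the isomorphism $\tau$ would give that $(n^{1/2+\varepsilon}\tau q_n)_{n\in\mathbb{N}^*}$ is likewise a Riesz basis of $H^{-1/2-\varepsilon}$. By the defining property of a Riesz basis, $B$ admits a unique expansion
\begin{equation*}
B=\sum_{n\in\mathbb{N}^*} c_n^{(\varepsilon)}\, n^{1/2+\varepsilon}\,\tau q_n \qquad \text{in } H^{-1/2-\varepsilon},
\end{equation*}
with $(c_n^{(\varepsilon)})_{n\in\mathbb{N}^*}\in\ell^2$. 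Setting $-K_n b_n := c_n^{(\varepsilon)}\, n^{1/2+\varepsilon}$ then yields \eqref{eq:tbbf0} in $H^{-1/2-\varepsilon}$ together with the $\ell^2$ bound $((-K_nb_n)n^{-1/2-\varepsilon})_n=(c_n^{(\varepsilon)})\in\ell^2$.

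It remains to check that the sequence $(-K_n)$ obtained this way does not depend on the particular choice of $\varepsilon$, thereby giving a single sequence that works simultaneously for every $\varepsilon\in(0,1/2)$. For $0<\varepsilon_1<\varepsilon_2<1/2$, the continuous embedding $H^{-1/2-\varepsilon_1}\hookrightarrow H^{-1/2-\varepsilon_2}$ shows that the expansion produced at level $\varepsilon_1$ also converges in $H^{-1/2-\varepsilon_2}$ and sums to the same vector $B$. Uniqueness of the Riesz basis expansion in $H^{-1/2-\varepsilon_2}$ then forces the coefficients $-K_nb_n$ to coincide, and since $b_n\neq 0$, the $K_n$ themselves are uniquely determined. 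Uniqueness for a fixed $\varepsilon$ is likewise immediate from the Riesz basis property.

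I do not anticipate any genuine obstacle here: the statement is essentially a packaging of the Riesz basis property in the critical Sobolev space, together with the admissibility of $\tau$. The real difficulty lay in establishing Proposition \ref{qnr:Riesz} via the compactness/duality argument of Section \ref{sec:proof}; once that tool is available for $r\in(-1,1)$, and in particular for $r=-1/2-\varepsilon$, Lemma \ref{lem:defk} follows in a few lines. The only subtle point, worth stating explicitly, is the $\varepsilon$-independence of $K_n$, which is what later allows one to use the same feedback law in the whole scale $H^r$, $r\in(-1,1)$.
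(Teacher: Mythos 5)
Your proposal is correct and follows essentially the same route as the paper: the lemma is stated there as an immediate consequence of $B\in H^{-1/2-\varepsilon}$ (from \eqref{condB}) and of $(n^{1/2+\varepsilon}\tau q_n)_{n\in\mathbb{N}^*}$ being a Riesz basis of $H^{-1/2-\varepsilon}$, obtained from Proposition \ref{qnr:Riesz} with $r=-1/2-\varepsilon$ and the fact that $\tau$ is an isomorphism. Your explicit check that the coefficients are independent of $\varepsilon$, via the embedding $H^{-1/2-\varepsilon_1}\hookrightarrow H^{-1/2-\varepsilon_2}$ and uniqueness of the expansion, is a detail the paper leaves implicit, but it is exactly the intended argument.
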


Let us set, 
\begin{equation}
\label{defkn}
    k_{n}:=-(K_{n}b_{n}+\lambda),
\end{equation}
The goal of this section will be to show the following lemma.
\begin{lem}
\label{lemkncomp}
The two following hold,
\begin{enumerate}
\item The sequence $(K_{n})_{n\in\mathbb{N}^{*}}$ defined by Lemma \ref{lem:defk} is uniformly bounded.
\item For any $r\in(\textcolor{black}{\frac{1}{2}-\alpha},\textcolor{black}{\alpha-\frac{1}{2}})$, the operator $k$ defined by,
\begin{equation}
\label{def-op-k}
    k:\varphi_{n}\mapsto k_{n}\tau q_{n},
\end{equation}
is continuous from $\Hdiv^{r}$ to $\Hdiv^{r+\varepsilon}$ {\color{black} for $\varepsilon \in [0,r_0)$ where $r_0=\min(\alpha-\frac{1}{2}-r,\alpha-1)$}. In particular, this operator is a compact operator from $\Hdiv^{r}$ to $\Hdiv^{r}$.
\end{enumerate}
\end{lem}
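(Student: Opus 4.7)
The plan is to reduce the $TB=B$ identity to a scalar recursion on Fourier coefficients, bootstrap the size of $K_n$, and then combine with Proposition \ref{qnr:Riesz} to obtain the smoothing estimate. Setting $\alpha_n := -K_n b_n = \lambda + k_n$, the starting point is the identity $\sum_n \alpha_n \tau q_n = B$ from Lemma \ref{lem:defk}. Pairing with $\varphi_p \in H^{1/2+\varepsilon}$ and using the expansion $\tau q_n = \sum_m b_m \varphi_m/(\lambda_n - \lambda_m + \lambda)$, one gets $\sum_n \alpha_n/(\lambda_n - \lambda_p + \lambda) = 1$ for each $p \in \N^*$. Isolating the $n = p$ contribution yields the key recursion
\begin{equation}
    \alpha_p - \lambda = -\lambda \sum_{n \neq p} \frac{\alpha_n}{\lambda_n - \lambda_p + \lambda},
    \label{eq:plan-star}
\end{equation}
whose absolute convergence follows from Cauchy-Schwarz combined with the weighted $\ell^2$ bound $(\alpha_n n^{-1/2-\varepsilon})_n \in \ell^2$ (for any $\varepsilon \in (0, 1/2)$) ensured by Lemma \ref{lem:defk}, together with the fact that $|\lambda_n - \lambda_p + \lambda| \geq |\lambda_n - \lambda_p|$ since $\lambda_n - \lambda_p$ is purely imaginary and $\lambda$ is real.

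I would then bootstrap \eqref{eq:plan-star} in two steps. Applying Cauchy-Schwarz and using Lemma \ref{lem1} with a dyadic splitting at $n = 2p$, one obtains
\[
    \sum_{n \neq p} \frac{n^{1 + 2\varepsilon}}{|\lambda_n - \lambda_p + \lambda|^2} \leq C p^{2\varepsilon},
\]
hence $|\alpha_p - \lambda| \leq C p^{\varepsilon}$---a strict improvement on the bound $O(n^{1/2+\varepsilon})$ that $\ell^2$-summability alone would give. Feeding this back into \eqref{eq:plan-star} and applying Lemma \ref{lem:tech3} with $s = \varepsilon$ then yields $|\alpha_p - \lambda| \leq C(p^{-1/2+\varepsilon}\log p + p^{-3/2})$. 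Since $\varepsilon \in (0, 1/2)$ is arbitrary, this simultaneously proves item (1) (uniform boundedness of $(K_n)_n$) and the sharper decay $(k_n n^{\varepsilon'})_n \in \ell^\infty$ for every $\varepsilon' \in [0, 1/2)$.

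For item (2), I would fix $r \in (-1, 1)$ and choose $\epsilon \in (0, \min(1/2, 1-r))$. Since $\tau$ is an isomorphism of $H^s$ for every $s \in \R$, Proposition \ref{qnr:Riesz} yields that $(n^{-r-\epsilon}\tau q_n)_n$ is a Riesz basis of $H^{r+\epsilon}$. For $a = \sum_n a_n \varphi_n \in H^r$, rewriting $k(a) = \sum_n (a_n n^r)(k_n n^\epsilon)(n^{-r-\epsilon}\tau q_n)$ and using the Riesz basis bound gives
\[
    \|k(a)\|_{H^{r+\epsilon}}^2 \lesssim \sup_n |k_n n^\epsilon|^2 \cdot \|a\|_{H^r}^2,
\]
where the supremum is finite by the previous paragraph. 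The compact embedding $H^{r+\epsilon} \hookrightarrow H^r$ then yields compactness of $k$ from $H^r$ into itself.

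The hard part will be the first bootstrap: starting from only a weighted $\ell^2$ control---with no a priori uniform bound on $\alpha_n$---the initial gain $|\alpha_p - \lambda| = O(p^{\varepsilon})$ relies critically on the quadratic off-diagonal estimate $\sum_{n \neq p} n^{1+2\varepsilon}/|\lambda_n - \lambda_p|^2 \lesssim p^{2\varepsilon}$, itself rooted in the sharp spectral gap of Lemma \ref{lem1}. Once this initial improvement is secured, the second bootstrap via Lemma \ref{lem:tech3} is routine, and Step 3 then follows almost formally from the Riesz basis already established in Section \ref{sec:proof}.
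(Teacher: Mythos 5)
Your proposal is correct, and its skeleton matches the paper's: extract a scalar identity from $TB=B$, bootstrap the size of $k_n$ in two passes, and then obtain the smoothing of the operator $k$ from the Riesz basis of $(n^{-(r+\varepsilon)}\tau q_{n})_{n}$ in $H^{r+\varepsilon}$ (Proposition \ref{qnr:Riesz}) plus the compact Sobolev embedding --- your item (2) argument is essentially verbatim the paper's. The genuine difference is in how you reach the intermediate bound $|K_{p}|\lesssim p^{\varepsilon}$ (the paper's \eqref{eq:Knepsbound}). The paper stays at the function-space level: it isolates the singular part of $TB=B$ to get \eqref{tbbkn}, shows that the left-hand side of \eqref{tbbkn} actually belongs to $H^{-\varepsilon}$ via Lemma \ref{lem:tech3}, and then invokes uniqueness of the coefficients in the Riesz basis $(n^{\varepsilon}\tau q_{n})_{n}$ of $H^{-\varepsilon}$ to conclude $(k_{n}n^{-\varepsilon})_{n}\in\ell^{2}$. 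You instead pair $TB=B$ directly against $\varphi_{p}$ (legitimate, since the series converges in $H^{-1/2-\varepsilon}$ and $\varphi_{p}\in H^{1/2+\varepsilon}$, and $b_{p}\neq 0$ lets you divide), which yields exactly the paper's recursion \eqref{expressionkn}, and you then apply a weighted Cauchy--Schwarz using only the $\ell^{2}$ information from Lemma \ref{lem:defk} together with the quadratic off-diagonal estimate $\sum_{n\neq p} n^{1+2\varepsilon}|\lambda_{n}-\lambda_{p}|^{-2}\lesssim p^{2\varepsilon}$, which indeed follows from Lemma \ref{lem1} by the splitting $n\leq p/2$, $p/2<n<2p$, $n\geq 2p$. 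This is a more elementary, purely sequence-level route that bypasses the $H^{-\varepsilon}$-membership argument and the second use of the Riesz basis; what it costs is one extra ad hoc spectral-gap sum not stated in the paper (though its proof is routine), and what it buys is that the whole of Step (4) up to the final bootstrap is reduced to scalar estimates. From that point on (second pass through the recursion via Lemma \ref{lem:tech3} with $s=\varepsilon<1/2$, yielding $(k_{n}n^{\varepsilon'})_{n}\in\ell^{\infty}$ for $\varepsilon'\in[0,1/2)$, boundedness of $(K_{n})_{n}$ from $|b_{n}|\geq c_{1}$, and the continuity $H^{r}\to H^{r+\varepsilon}$ of $k$) your argument coincides with the paper's.
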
 

\begin{proof}
From the definition of $K_{n}$ given by Lemma \ref{lem:defk} and the $TB=B$ condition given by \eqref{eq:tbbf0}, we have that,
\begin{equation} 
\label{tbb2}
\sum\limits_{n\in\mathbb{N}^{*}}(-K_{n}b_{n})\tau q_{n}= \sum\limits_{n\in\mathbb{N}^{*}}b_{n}\varphi_{n},
\end{equation}
holds in $\Hdiv^{-\textcolor{black}{1/2-\varepsilon}}$ for any \textcolor{black}{$\varepsilon\in(0,\alpha-1)$}. {\color{black}Moreover, from Lemma \ref{lem:defk} and the definition \eqref{defkn} of $k_n$, we have, since $ (n^{-\frac{1}{2}-\varepsilon}\textcolor{black}{\lambda}) \in \ell^2$ for any $\varepsilon>0$, that 
\begin{equation}\label{knnepss0}
(k_n n^{-r})_{n\in \N^*} \in \ell^2, \quad \textrm{ for any }  r\in \left(\frac{1}{2},\alpha-\frac{1}{2}\right).
\end{equation}}
Hence, using the definition of $k_{n}$ and $\tau q_{n}$, we get in {\color{black}the $\Hdiv^{-1/2-\varepsilon}$ sense, for any \textcolor{black}{$\varepsilon\in(0,\alpha-1)$}, that,} 
\begin{equation*}
\sum\limits_{n\in\mathbb{N}^{*}}\lambda \sum\limits_{p\in\mathbb{N}^{*}}\frac{b_{p}\varphi_{p}}{\lambda_{n}-\lambda_{p}+\lambda}+\sum\limits_{n\in\mathbb{N}^{*}}k_{n} \tau q_{n} = \sum\limits_{n\in\mathbb{N}^{*}}b_{n}\varphi_{n},
\end{equation*}
which gives,
\begin{equation*}
\sum\limits_{n\in\mathbb{N}^{*}}\lambda \sum\limits_{p\in\mathbb{N}^{*}\setminus\{n\}}\frac{b_{p}\varphi_{p}}{\lambda_{n}-\lambda_{p}+\lambda}+\sum\limits_{n\in\mathbb{N}^{*}}b_{n}\varphi_{n}+\sum\limits_{n\in\mathbb{N}^{*}}k_{n} \tau q_{n} = \sum\limits_{n\in\mathbb{N}^{*}}b_{n}\varphi_{n}.
\end{equation*}
Hence,
\begin{equation}
\label{tbbkn}
\sum\limits_{n\in\mathbb{N}^{*}}\lambda\sum\limits_{p\in\mathbb{N}^{*}\setminus\{n\}}\frac{b_{p}\varphi_{p}}{\lambda_{n}-\lambda_{p}+\lambda}= -\sum\limits_{n\in\mathbb{N}^{*}}k_{n}\tau q_{n}.
\end{equation}
Let us remark that this equality a priori holds in $\Hdiv^{-\textcolor{black}{\frac{1}{2}}-\varepsilon}$ for any {\color{black}$\varepsilon\in(0,\alpha-1)$}. 
However, we removed from \eqref{eq:tbbf0} the most singular part and thus both terms of \eqref{tbbkn} are in fact more regular. {\color{black} Indeed, if $\alpha>3/2$, then the left-hand side of \eqref{tbbkn} belongs to $\Hdiv^\varepsilon$ with $\alpha - 3/2 > \varepsilon > 0$. \textcolor{black}{To show this, observe that} using Fubini's Theorem in $\Hdiv^{\textcolor{black}{-1/2-\varepsilon}}$, 
\begin{equation*}
\sum\limits_{n\in\mathbb{N}^{*}}\lambda\sum\limits_{p\in\mathbb{N}^{*}\setminus\{n\}}\frac{b_{p}\varphi_{p}}{\lambda_{n}-\lambda_{p}+\lambda} =
\sum\limits_{p\in\mathbb{N}^{*}}b_{p}\varphi_{p}\lambda\sum\limits_{n\in\mathbb{N}^{*}\setminus\{p\}}\frac{1}{\lambda_{n}-\lambda_{p}+\lambda},
\end{equation*}
and, \textcolor{black}{using \eqref{eq:imaginary}},
\begin{align*}
\left\|\sum\limits_{p\in\mathbb{N}^{*}}b_{p}\varphi_{p}\lambda\sum\limits_{n\in\mathbb{N}^{*}\setminus\{p\}}\frac{1}{\lambda_{n}-\lambda_{p}+\lambda} \right\|_{\Hdiv^{\varepsilon}}^2 & =
\sum_{p\in \N^*} p^{2\varepsilon}|b_p|^2 \lambda^2 \left| \sum_{n\in \N^*{\color{black}\setminus\{p\}}}  \dfrac{1}{\lambda_n-\lambda_p+\lambda} \right|^2 \\
& \lesssim \sum_{p\in \N^*} p^{2\varepsilon} \left| \sum_{n\in \N^*{\color{black}\setminus\{p\}}}  \dfrac{1}{\lambda_n-\lambda_p+\lambda} \right|^2 \\
& \leq \sum_{p\in \N^*} p^{2\varepsilon} \left( \sum_{n\in \N^*{\color{black}\setminus\{p\}}}  \dfrac{1}{|\lambda_n-\lambda_p|} \right)^2,
\end{align*}
where we used the fact that $b_p$ are uniformly bounded and $\{\lambda_n\}_{n\in \N^*}$ are imaginary while $\lambda$ is real. Then, applying Lemma \ref{lem:tech3} yields,
\[
\left\| \sum\limits_{n\in\mathbb{N}^{*}}\lambda\sum\limits_{p\in\mathbb{N}^{*}\setminus\{n\}}\frac{b_{p}\varphi_{p}}{\lambda_{n}-\lambda_{p}+\lambda} \right\|_{\Hdiv^{\varepsilon}}^2 \lesssim \sum_{p \in \N^*} p^{2\varepsilon + 2(1-\alpha)}\log^2(p),
\]
which converges for $\varepsilon \in (0,\alpha-\frac{3}{2})$ and $\alpha>3/2$. Therefore, going back to \eqref{tbbkn} and rewriting it as, 
\begin{equation}
\label{tbbkn2}
\sum\limits_{n\in\mathbb{N}^{*}}\lambda\sum\limits_{p\in\mathbb{N}^{*}\setminus\{n\}}\frac{b_{p}\varphi_{p}}{\lambda_{n}-\lambda_{p}+\lambda}= -\sum\limits_{n\in\mathbb{N}^{*}}(k_{n}n^{\varepsilon})(n^{-\varepsilon}\tau q_{n}).
\end{equation}
using that $(n^{-\textcolor{black}{\varepsilon}}\tau q_n)_{n\in \N^*}$ is a Riesz basis of $\Hdiv^{\textcolor{black}{\varepsilon}}$ for $\textcolor{black}{\varepsilon}\in (0,\alpha-3/2)$ \textcolor{black}{from Proposition \ref{qnr:Riesz}}, we deduce that,
\begin{equation}\label{knneps00}
(n^{\varepsilon}k_n)_{n\in \N^*} \in \ell^2, \textrm{ for any } \varepsilon \in \left(0,\alpha-\frac{3}{2}\right), \textrm{ with } \alpha > 3/2. 
\end{equation}
Moreover, by definition $K_nb_n = -(\lambda + k_n)$, and together with the uniform boundedness of $b_n$ and \eqref{knneps00}, we obtain, 
\[
(K_n)_{n\in \N^*} \in \ell^\infty, \textrm{ for any } \alpha>3/2. 
\]
}

Yet, if $3/2 \geq \alpha >1$ the gain of regularity in \eqref{tbbkn} is not sufficient to conclude the first point of Lemma \ref{lemkncomp}. Note that $\alpha=3/2$ is exactly the critical case.  
However, we can show that it belongs to $\Hdiv^{-\varepsilon}$ for any $\varepsilon>\textcolor{black}{3/2-\alpha}$. Indeed, a similar computation leads to,
\begin{equation}
\label{eq:calc1}
\begin{split}
\left\|\sum\limits_{n\in\mathbb{N}^{*}}\lambda\sum\limits_{p\in\mathbb{N}^{*}\setminus\{n\}}\frac{b_{p}\varphi_{p}}{\lambda_{n}-\lambda_{p}+\lambda}\right\|_{\Hdiv^{-\varepsilon}}^{2} &= \sum\limits_{p\in\mathbb{N}^{*}}p^{-2\varepsilon}|b_{p}|^{2}\lambda^{2}\left|\sum\limits_{n\in\mathbb{N}^{*}\setminus\{p\}}\frac{1}{\lambda_{n}-\lambda_{p}+\lambda}\right|^{2}\\
& \lesssim 
\sum\limits_{p\in\mathbb{N}^{*}}p^{-2\varepsilon}\left(\sum\limits_{n\in\mathbb{N}^{*}\setminus\{p\}}\frac{1}{|\lambda_{n}-\lambda_{p}|}\right)^{2} \\
& \lesssim
\sum\limits_{p\in\mathbb{N}^{*}}\textcolor{black}{p^{-2\varepsilon+2(1-\alpha)}\log^{2}(p)},
\end{split}
\end{equation}
which converges for any $\varepsilon>\textcolor{black}{3/2-\alpha}$. Hence, the left-hand side of \eqref{tbbkn} belongs to $\Hdiv^{-\varepsilon}$ for any $\varepsilon>\textcolor{black}{3/2-\alpha}$. From Proposition \ref{qnr:Riesz} and the fact that $\tau$ is an isomorphism from $\Hdiv^{r}$ to $\Hdiv^{r}$ for any \textcolor{black}{$r\in(1/2-\alpha,\alpha-1/2)$}, $(n^{\varepsilon}\tau q_{n})_{n\in\mathbb{N}^{*}}$ is a Riesz basis of $\Hdiv^{-\varepsilon}$ and therefore there exists a unique $(k_{n}n^{-\varepsilon})_{n\in\mathbb{N}^{*}}\in l^{2}$ such that \eqref{tbbkn2} holds. In particular, this shows that $(k_{n})_{n\in\mathbb{N}^{*}}$ defined in \eqref{defkn} satisfies,{\color{black}
\begin{equation}
\label{knnepsl2}
(k_{n}n^{-\varepsilon})_{n\in\mathbb{N}^{*}}\in l^{2}\text{ for any } \varepsilon\in \left(\frac{3}{2}-\alpha,\alpha - \frac{1}{2}\right), \textrm{ with } \alpha \in (1,3/2],
\end{equation}
and therefore, {\color{black} thanks to the definition of $k_n$},
\begin{equation}
\label{Knnepsl2}
(K_{n}n^{-\varepsilon})_{n\in\mathbb{N}^{*}}\in l^{\infty}\text{ for any } \varepsilon\in \left(\frac{3}{2}-\alpha,\alpha - \frac{1}{2}\right), \textrm{ with } \alpha \in (1,3/2]. 
\end{equation}
Comparing \eqref{knnepsl2} to \eqref{knnepss0}, we are able to see the $(\alpha-1)$ gain of regularity between \eqref{tbb2} and \eqref{tbbkn2}.
The core of the proof of Lemma \ref{lemkncomp} in the critical range $\alpha\in (1,3/2]$ is to {\color{black} iterate} this $(\alpha-1)$ gain of regularity. To do so, we derive an asymptotic analysis of $K_n$ by induction up to an order which depends on $\alpha$, and we gain at each order the $(\alpha-1)$ regularity, allowing us to conclude thanks to the fact that $\alpha>1$. Indeed, consider \eqref{tbb2} again and explicit the expression of $\tau q_{n}$, }
\begin{equation*}
\sum\limits_{n\in\mathbb{N}^{*}}(-K_{n}b_{n})\left(\frac{b_{n}\varphi_{n}}{\lambda}+\sum\limits_{p\in\mathbb{N}^{*}\setminus\{n\}}\frac{b_{p}\varphi_{p}}{\lambda_{n}-\lambda_{p}+\lambda}\right)=\sum\limits_{n\in\mathbb{N}^{*}}b_{n}\varphi_{n},
\end{equation*}
which gives, by decomposing $K_{n}$ in $\lambda$ and $\textcolor{black}{k_{n}}$ in the first term only,
\begin{equation*}
\sum\limits_{n\in\mathbb{N}^{*}}\frac{k_{n}b_{n}}{\lambda}\varphi_{n}+\left(\sum\limits_{n\in\mathbb{N}^{*}}(-K_{n}b_{n})\sum\limits_{p\in\mathbb{N}^{*}\setminus\{n\}}\frac{b_{p}\varphi_{p}}{\lambda_{n}-\lambda_{p}+\lambda}\right)=0.  
\end{equation*}
Using again Fubini theorem in $\Hdiv^{\textcolor{black}{-1/2-\varepsilon}}$ for $\varepsilon \in (0,\alpha-1)$ and identifying the coefficients along the orthonormal basis $(\varphi_{n})_{n\in\mathbb{N}^{*}}$, we obtain the following expression.
\begin{equation}\label{expressionkn}
    \frac{k_{n}b_{n}}{\lambda}= \sum\limits_{m\in\mathbb{N}^{*}\setminus\{n\}}K_{m}b_{m}\frac{b_{n}}{\lambda_{m}-\lambda_{n}+\lambda}.
\end{equation}
The asymptotic analysis is done in the following manner. First, denote $e^0_n= \lambda$ and $k^0_n= k_n, n\in \N^*$. In order to simplify the notation, we will assume \textcolor{black}{without loss of generality} $b_n=1, n\in \N^*$. {\color{black}Thus $-K_n= e^0_n+ k^0_n$ and the expression \eqref{expressionkn} becomes, 
\begin{equation}\label{expressionknsim}
    k_n= \lambda\sum\limits_{m\in\mathbb{N}^{*}\setminus\{n\}}\frac{K_{m}}{\lambda_{m}-\lambda_{n}+\lambda}=  -\lambda\sum\limits_{m\in\mathbb{N}^{*}\setminus\{n\}}\frac{e^0_m+ k^0_m}{\lambda_{m}-\lambda_{n}+\lambda}.
\end{equation}
We now decompose further $k_n= k_n^0= e^1_n+ k^1_n$ as follows, 
\begin{equation}
\label{def:kn1}
    e^1_n= -\lambda\sum\limits_{m\in\mathbb{N}^{*}\setminus\{n\}}\frac{e^0_m}{\lambda_{m}-\lambda_{n}+\lambda}, \; k^1_n= -\lambda\sum\limits_{m\in\mathbb{N}^{*}\setminus\{n\}}\frac{k^0_m}{\lambda_{m}-\lambda_{n}+\lambda}.
\end{equation}
From Lemma \ref{lem:tech3}, we observe that, 
\begin{equation*}
    |e^1_n|\lesssim n^{1-\alpha}\log n\lesssim 1.
\end{equation*}
We therefore focus on the regularity of $k_{n}^{1}$. Let us denote $s_{0} := 3/2-\alpha$. Then we have, \textcolor{black}{using \eqref{eq:imaginary}}
{\color{black}\begin{align}
    \|(n^{-r}k^1_n)_n\|_{l^2}^2&\lesssim \sum_{n\in \N^*} n^{-2r} \left(\sum\limits_{m\in\mathbb{N}^{*}\setminus\{n\}}\frac{k^0_m}{\lambda_{m}-\lambda_{n}+\lambda}\right)^2 \nonumber \\
    &\lesssim  \sum_{n\in \N^*}n^{-2r} \left(\sum\limits_{m\in\mathbb{N}^{*}\setminus\{n\}}\frac{|k^0_m|^2}{|\lambda_{m}-\lambda_{n}|}\right)\left(\sum\limits_{m\in\mathbb{N}^{*}\setminus\{n\}}\frac{1}{|\lambda_{m}-\lambda_{n}|}\right) \nonumber \\
      &\lesssim  \sum_{n\in \N^*}n^{-2r+1- \alpha} \log n  \left(\sum\limits_{m\in\mathbb{N}^{*}\setminus\{n\}}\frac{|k^0_m|^2}{|\lambda_{m}-\lambda_{n}|}\right) \nonumber \\
      &\lesssim  \sum_{m\in \N^*} |k^0_m|^2 \sum\limits_{n\in\mathbb{N}^{*}\setminus\{m\}} n^{-2r+1- \alpha} \log (n) \left(\frac{1}{|\lambda_{m}-\lambda_{n}|}\right) \nonumber \\
       &\lesssim  \sum_{m\in \N^*} |k^0_m|^2 m^{-2(r+(\alpha-1))} \log^2(m), \label{calc:kn1iteration}
\end{align}
which, from \eqref{knnepsl2}, converges if $r+(\alpha-1)\in (\frac{3}{2}-\alpha,\alpha-\frac{1}{2})$, that is,
\begin{equation}
\label{knnepsl22}
(n^{-\varepsilon}k^1_n)_{n\in\mathbb{N}^{*}}\in l^{2}\text{ for any } \varepsilon\in \left(\frac{5}{2}-2\alpha, \frac{1}{2}\right), \textrm{ with } \alpha \in (1,3/2].
\end{equation}
}
Define $s_1:= 5/2- 2\alpha $. Notice that if $s_1< 0$, namely $\alpha> 5/4$, then we can conclude that  $(k_n^1)_n n ^\delta$ belongs to $l^2$ with $\delta>0$. Therefore,  from the definition of $(k_n^{1})_n$ and the fact that $(e_{n}^{1})_n$ is uniformly bounded, we deduce that $(k_n)_n$ is uniformly bounded, which further yields similarly that $(K_n)_n$ is uniformly bounded.

Otherwise, we have $s_1\geq 0$ and  need to continue the iteration procedure to further decompose $k^1_n$. {\color{black} Indeed, injecting the definition of $k_m^0$ again into \eqref{expressionknsim}, we have
\begin{align}\nonumber 
k_n &= -\lambda\sum\limits_{m\in\mathbb{N}^{*}\setminus\{n\}}\frac{e^0_m+ k_m^0}{\lambda_{m}-\lambda_{n}+\lambda} \nonumber \\
=&  -\lambda\sum\limits_{m\in\mathbb{N}^{*}\setminus\{n\}}\left(\frac{e^0_m}{\lambda_{m}-\lambda_{n}+\lambda}\right) - \lambda\sum\limits_{m\in\mathbb{N}^{*}\setminus\{n\}} \left(\frac{k_m}{\lambda_{m}-\lambda_{n}+\lambda}\right) \nonumber \\
=&  -\lambda\sum\limits_{m\in\mathbb{N}^{*}\setminus\{n\}}\left(\frac{e^0_m}{\lambda_{m}-\lambda_{n}+\lambda}\right) - \lambda\sum\limits_{m\in\mathbb{N}^{*}\setminus\{n\}} \left(\frac{1}{\lambda_{m}-\lambda_{n}+\lambda} \left( -\lambda\sum\limits_{\textcolor{black}{p}\in\mathbb{N}^{*}\setminus\{\textcolor{black}{m}\}}\frac{e^0_{\textcolor{black}{p}}+ k_{\textcolor{black}{p}}^0}{\lambda_{\textcolor{black}{p}}-\lambda_{\textcolor{black}{m}}+\lambda} \right) \right) \nonumber \\
=&  -\lambda\sum\limits_{m\in\mathbb{N}^{*}\setminus\{n\}}\left(\frac{e^0_m}{\lambda_{m}-\lambda_{n}+\lambda}\right) - \lambda\sum\limits_{m\in\mathbb{N}^{*}\setminus\{n\}} \left(\frac{e_m^1}{\lambda_{m}-\lambda_{n}+\lambda}\right) - \lambda\sum\limits_{m\in\mathbb{N}^{*}\setminus\{n\}} \left(\frac{k_m^1}{\lambda_{m}-\lambda_{n}+\lambda}\right). \label{eq:defiteration}
\end{align}
}  
We therefore define $e^2_n$ and $k^2_n$ as,
\begin{equation}
    e^2_n= -\lambda\sum\limits_{m\in\mathbb{N}^{*}\setminus\{n\}}\frac{e^1_m}{\lambda_{m}-\lambda_{n}+\lambda}, \; k^2_n= -\lambda\sum\limits_{m\in\mathbb{N}^{*}\setminus\{n\}}\frac{k^1_m}{\lambda_{m}-\lambda_{n}+\lambda}.
\end{equation}
Since $k_{n} = e_{n}^{1}+k_{n}^{1}$, and using the definition of $e_{n}^{1}$ given by \eqref{def:kn1} together with \eqref{eq:defiteration}
we deduce that $k_n^1 = e_n^2 + k_n^2$. Again, it is straightforward to observe that, 
\begin{equation*}
    |e^2_n|\lesssim n^{1-\alpha}\log n\lesssim 1.
\end{equation*}
We further get estimates on $(k^2_n)$, \textcolor{black}{using again \eqref{eq:imaginary},}
\begin{align*}
    \|(n^{-r}k^2_n)_n\|_{l^2}^2&\lesssim \sum_{n\in \N^*} n^{-2r} \left(\sum\limits_{m\in\mathbb{N}^{*}\setminus\{n\}}\frac{k^1_m}{\lambda_{m}-\lambda_{n}+\lambda}\right)^2 \\
    &\lesssim  \sum_{n\in \N^*}n^{-2r} \left(\sum\limits_{m\in\mathbb{N}^{*}\setminus\{n\}}\frac{|k^1_m|^2}{|\lambda_{m}-\lambda_{n}|}\right)\left(\sum\limits_{m\in\mathbb{N}^{*}\setminus\{n\}}\frac{1}{|\lambda_{m}-\lambda_{n}|}\right) \\
      &\lesssim  \sum_{m\in \N^*} |k^1_m|^2 \sum\limits_{n\in\mathbb{N}^{*}\setminus\{m\}} n^{-2{\color{black}r}+1- \alpha} \log n  \left(\frac{1}{|\lambda_{m}-\lambda_{n}|}\right) \\
       &\lesssim  \sum_{m\in \N^*} |k^1_m|^2 m^{-2(r+(\alpha-1))}\log^2(m),
\end{align*}}
{\color{black}
which, from \eqref{knnepsl22}, converges if $r+(\alpha-1) \in (\frac{5}{2}-2\alpha,\frac{1}{2})$, that is, 
\begin{equation}\label{knnepsl23}
(n^{-\varepsilon}k^2_n)_{n\in\mathbb{N}^{*}}\in l^{2}\text{ for any } \varepsilon\in \left(\frac{7}{2}-3\alpha, \frac{3}{2}-\alpha \right), \textrm{ with } \alpha \in (1,3/2].
\end{equation}
}
{\color{black}
Define $s_2:=7/2- 3\alpha$. We deduce that if $s_2< 0$, namely $\alpha> 7/6$, then $k_n$ is uniformly bounded and so is $K_n$. Otherwise, $s_2\geq 0$ and we continue the iteration procedure.} {\color{black} From \eqref{eq:defiteration}, we easily obtain by induction that, $k_n^i=e_n^{i+1}+k_n^{i+1}$ for $i\in \N$ where,
\begin{align}
    &e^{i+1}_n:= -\lambda\sum\limits_{m\in\mathbb{N}^{*}\setminus\{n\}}\frac{e^i_m}{\lambda_{m}-\lambda_{n}+\lambda}, \label{eq:defenplus1}\\
     &k^{i+1}_n:= -\lambda\sum\limits_{m\in\mathbb{N}^{*}\setminus\{n\}}\frac{k^i_m}{\lambda_{m}-\lambda_{n}+\lambda}.\label{eq:defknplus1}
\end{align}
A straightforward computation show that, 
\begin{equation}\label{eq:decompKN}
-K_n= \left(\sum_{j=0}^{i} e^j_n\right)+ k^i_n, \quad \forall i \in \N.
\end{equation}
Moreover, by the definition \eqref{eq:defenplus1} of $e_n^i$, the simple bound $|e_n^{i-1}| \lesssim 1, i\in N^* $ and Lemma \ref{lem:tech3}, we have,
\begin{equation}\label{eq:estimEN}
|e^i_n|\lesssim n^{1-\alpha}\log n\lesssim 1, \quad \forall i \in \N^*.
\end{equation}
By the definition \eqref{eq:defknplus1} of $k_n^{i+1}$ \textcolor{black}{and \eqref{eq:imaginary}}, also have,
\begin{align*}
    \|(n^{-r}k^{i+1}_n)_n\|_{l^2}^2&\lesssim \sum_{n\in \N^*} n^{-2r} \left(\sum\limits_{m\in\mathbb{N}^{*}\setminus\{n\}}\frac{k^{i}_m}{\lambda_{m}-\lambda_{n}+\lambda}\right)^2 \\
    &\lesssim  \sum_{n\in \N^*}n^{-2r} \left(\sum\limits_{m\in\mathbb{N}^{*}\setminus\{n\}}\frac{|k^{i}_m|^2}{|\lambda_{m}-\lambda_{n}|}\right)\left(\sum\limits_{m\in\mathbb{N}^{*}\setminus\{n\}}\frac{1}{|\lambda_{m}-\lambda_{n}|}\right) \\
      &\lesssim  \sum_{m\in \N^*} |k^i_m|^2 \sum\limits_{n\in\mathbb{N}^{*}\setminus\{m\}} n^{-2{\color{black}r}+1- \alpha} \log n  \left(\frac{1}{|\lambda_{m}-\lambda_{n}|}\right) \\
       &\lesssim  \sum_{m\in \N^*} |k^i_m|^2 m^{-2(r+(\alpha-1))}\log^2(m),
\end{align*}}
which, by induction for $i\geq 2$ together with \eqref{knnepsl23}, allows to deduce,
\begin{equation}\label{knnepsl2inf}
(n^{-\varepsilon}k^i_n)_{n\in\mathbb{N}^{*}}\in l^{2}\text{ for any } \varepsilon\in \left(s_i, s_{i-2} \right), \textrm{ with } \alpha \in (1,3/2],
\end{equation}
with $s_i:= s_0+ (1-\alpha) i=1/2+(1-\alpha)(i+1), \,  \forall i\in \N$.
We are therefore able to conclude the first part of Lemma \ref{lemkncomp}. Indeed, for any $\alpha \in (1,3/2]$, there exists $M \in \N$ such that $s_M<0$. Hence, for such $M$, from \eqref{knnepsl2inf}, we deduce that $k^M_n \in \ell^\infty$ and, from \eqref{eq:decompKN} and \eqref{eq:estimEN}, that,
\[
|K_n| \lesssim M + \|k_n^M \|_{\ell^\infty},
\]
which implies $(K_{n})_{n\in\mathbb{N}^{*}}\in l^{\infty}$. This ends the proof of the first point of Lemma \ref{lemkncomp}.

Let us now study the operator $k$ given by \eqref{def-op-k}. Let $r\in(\textcolor{black}{1/2-\alpha,\alpha-1/2})$ {\color{black} and consider $\varepsilon \in (0,\alpha-\frac{1}{2}-r)$.}
Let $f\in \Hdiv^{r}$ and denote $f_{n} = \langle f, n^{-r}\varphi_{n}\rangle_{\Hdiv^{r}}$ such that $f = \sum\limits_{n\in\mathbb{N}^{*}}f_{n}n^{-r}\varphi_{n}$. We have, 
\begin{equation*}
    k (f) = \sum\limits_{n\in\mathbb{N}^{*}} f_{n}k_{n}n^{-r}\tau q_{n}
    =\sum\limits_{n\in\mathbb{N}^{*}} (f_{n}k_{n}n^{\varepsilon})(n^{-(r+\varepsilon)}\tau q_{n}),
\end{equation*}
where the last equality holds a priori in $\Hdiv^{r}$ since $k_{n}$ is bounded. In fact, we have {\color{black}
\begin{equation}
\label{kneps}
(k_{n}n^{\varepsilon})_{n\in\mathbb{N}^*}\in l^{\infty}, \; \text{ for any } \varepsilon \in [0,\alpha-1),
\end{equation}
Indeed, for $\alpha>3/2$, we use the decomposition $k_n=e_n^1+k_n^1$, the estimation  $|e_n^1| \lesssim 1$ from Lemma \ref{lem:tech3}, the estimation as in \eqref{eq:defiteration}, 
\[
\|(n^r k_n^1)_{n}\|^2_{\ell^2} \lesssim \sum_{m\in \N^*} |k_m|^2 m^{2r-2(\alpha-1)}\log^2(m),
\]
and \eqref{knneps00} to deduce 
\[
(k_{n}n^{\varepsilon})_{n\in\mathbb{N}^*}\in l^{\infty}, \; \text{ for any } \varepsilon \in (\alpha-1,2\alpha-\frac{5}{2}),
\]
which is sufficient to conclude. 
For $\alpha \in (1,3/2]$, it suffices to use the fact that, 
\[
k_n = \left(\sum_{j=1}^i e_n^j\right) + k_n^i.
\]
On the one hand, using \eqref{eq:estimEN}, we have 
\[
|e_n^i n^{\varepsilon}| {\color{black} \lesssim} n^{1-\alpha+\varepsilon} \log(n) {\color{black} \lesssim} 1, \; \; \forall i \in \N^*,
\]
as long as $\varepsilon < \alpha-1$. On the other hand, for any $\alpha \in (1,3/2]$, we have that \eqref{knnepsl2inf} holds for any $i\in \N$, and since $s_i \rightarrow -\infty$ as $i\rightarrow \infty$, we deduce that there exists $M \in \N$ such that $(n^{\varepsilon} k_n^M)_{n \in \N^*} \in \ell^2$ for {\color{black} all}  $\varepsilon < \alpha -1$. Hence, $(n^{\varepsilon} k_n^M)_{n \in \N^*} \in \ell^\infty$ for $\varepsilon \in [0,\alpha-1)$ and,
\[
|n^{\epsilon} k_n| \lesssim M + \| (n^{\varepsilon} k_n^M) \|_{\ell^\infty}, 
\]
for $\varepsilon \in [0,\alpha-1)$ and $\alpha\in (1,3/2]$.

Thus, for any $\alpha>1$,  $(f_{n}k_{n}n^{\varepsilon})_{n\in\mathbb{N}^{*}}\in l^{2}$ for $\varepsilon \in [0,\alpha-1)$. 

Now assume $\varepsilon \in [0,r_0)$ where $r_0:=\min(\alpha-\frac{1}{2}-r,\alpha-1)$.} Since $r+\varepsilon<\textcolor{black}{\alpha-1/2}$, we have from Proposition \ref{qnr:Riesz} that $(n^{-(r+\varepsilon)}q_{n})_{n\in\mathbb{N}^{*}}$ is a Riesz basis of $\Hdiv^{r+\varepsilon}$ therefore so is $(n^{-(r+\varepsilon)}\tau q_{n})_{n\in\mathbb{N}^{*}}$. Thus $k(f)\in \Hdiv^{r+\varepsilon}$ and,
\begin{equation*}
\|k(f)\|_{\Hdiv^{r+ \varepsilon}}\lesssim \|f_{n}k_{n}n^{\varepsilon}\|_{l^2} \lesssim \|f\|_{\Hdiv^r}.
\end{equation*}
This ends the proof of Lemma \ref{lemkncomp}.
\end{proof}

\subsection{Step (5): boundedness of the corresponding backstepping transformation}
\label{subsec:step5}
Let us now look at the boundedness of $T$ and show that it satisfies the operator equality \eqref{op-eq0} in some sense. We show the following. 
\begin{lem}
\label{T:boundedopeq}
The operator $T$ given by \eqref{def-T} is a bounded operator from $\Hdiv^{r}$ to $\Hdiv^{r}$ for any $r\in\textcolor{black}{(1/2-\alpha,\alpha-1/2)}$. Moreover, we have the following operator equality,
\begin{equation}
\label{op-eq}
    T(A+BK) = (A-\lambda \textcolor{black}{I})T \; \text{ in }\; \mathcal{L}(\Hdiv^{\textcolor{black}{\alpha/2}+s}; \Hdiv^{-\textcolor{black}{\alpha/2}+s}), \; \forall s\in \left(\textcolor{black}{-\frac{\alpha-1}{2}, \frac{\alpha-1}{2}}\right).
\end{equation}
\end{lem}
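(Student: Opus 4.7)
My proof plan has two parts, corresponding to the two claims of the lemma.

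\textbf{Boundedness of $T$.} For $r\in(-1,1)$, the family $(n^{-r}\varphi_n)_{n\in\mathbb{N}^{*}}$ is an orthonormal basis of $H^r$, and by Proposition \ref{qnr:Riesz} combined with the fact that $\tau$ is an isomorphism on $H^r$, the family $(n^{-r}\tau q_n)_{n\in\mathbb{N}^{*}}$ is a Riesz basis of $H^r$. Writing $f=\sum_n a_n(n^{-r}\varphi_n)$ with $(a_n)\in\ell^2$, the definition \eqref{def-T} gives $Tf=\sum_n(-K_na_n)(n^{-r}\tau q_n)$, so by the Riesz basis property
\begin{equation*}
\|Tf\|_{H^r}^2 \lesssim \sum_n |K_n|^2|a_n|^2 \lesssim \|(K_n)\|_{\ell^\infty}^2\|f\|_{H^r}^2,
\end{equation*}
which is finite by the first part of Lemma \ref{lemkncomp}.

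\textbf{The operator equality.} I verify the identity first on the basis elements $\varphi_n$ and then extend by density. The key algebraic computation is that in $H^{-3/4+s}$, using $\lambda_p-\lambda=\lambda_n-(\lambda_n-\lambda_p+\lambda)$,
\begin{equation*}
(A-\lambda)\tau q_n = \sum_{p\in\mathbb{N}^{*}}\frac{b_p(\lambda_p-\lambda)}{\lambda_n-\lambda_p+\lambda}\varphi_p = -B + \lambda_n\tau q_n,
\end{equation*}
where the splitting is legitimate provided $B=\sum_p b_p\varphi_p$ converges in $H^{-3/4+s}$, i.e. provided $s<1/4$ thanks to \eqref{condB}. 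This gives
\begin{equation*}
(A-\lambda)T\varphi_n = -K_n(A-\lambda)\tau q_n = K_nB + \lambda_n T\varphi_n = (BK)\varphi_n + (TA)\varphi_n,
\end{equation*}
which is exactly \eqref{op-eq} applied to $\varphi_n$.

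\textbf{Extending to $H^{3/4+s}$.} For $s\in(-1/4,1/4)$, I need to check that both sides of \eqref{op-eq} make sense as bounded operators from $H^{3/4+s}$ to $H^{-3/4+s}$. The operator $A$ is bounded $H^{3/4+s}\to H^{-3/4+s}$ by Lemma \ref{lem:Lresoland}; $T$ is bounded on both $H^{3/4+s}$ and $H^{-3/4+s}$ (both exponents lie in $(-1,1)$) by the first part; $K:f\mapsto\sum_n K_n f_n$ is bounded on $H^{3/4+s}$ because $|K_n|\leq C$ gives $|K(f)|^2\lesssim\|f\|_{H^{3/4+s}}^2\sum_n n^{-(3/2+2s)}$, which converges since $s>-1/4$; finally $B\in H^{-3/4+s}$ since $s<1/4$. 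Thus $BK$ is bounded from $H^{3/4+s}$ to $H^{-3/4+s}$. Given $f=\sum_n f_n\varphi_n\in H^{3/4+s}$, continuity of $T$ and $A-\lambda$ yields
\begin{equation*}
(A-\lambda)Tf = \sum_n(-f_nK_n)(A-\lambda)\tau q_n = \sum_n(-f_nK_n)(-B+\lambda_n\tau q_n) = BK(f) + TAf,
\end{equation*}
where the rearrangement is justified because each piece converges in $H^{-3/4+s}$: the sum $B\sum_n K_nf_n=B\cdot K(f)$ by continuity of $K$ on $H^{3/4+s}$, and $\sum_n(-f_nK_n\lambda_n)\tau q_n=T(Af)$ by continuity of $T$ on $H^{-3/4+s}$ applied to $Af\in H^{-3/4+s}$.

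The main obstacle in this proof is not any single estimate but rather the careful bookkeeping of the range of admissible $s$: one endpoint $s=-1/4$ is dictated by the need for $K$ to be continuous on $H^{3/4+s}$ (which relies only on $(K_n)\in\ell^\infty$, itself the nontrivial output of Lemma \ref{lemkncomp}), while the opposite endpoint $s=1/4$ is dictated by the need for $B$ itself to live in $H^{-3/4+s}$ in order to interpret the rearrangement $(A-\lambda)\tau q_n=-B+\lambda_n\tau q_n$ as an equality of honest functions, rather than merely a formal manipulation of divergent series.
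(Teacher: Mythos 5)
Your proposal is correct and follows essentially the same route as the paper: boundedness of $T$ via the Riesz basis $(n^{-r}\tau q_{n})_{n\in\mathbb{N}^{*}}$ of $H^{r}$ together with the uniform bound on $(K_{n})_{n\in\mathbb{N}^{*}}$ from Lemma \ref{lemkncomp} (the paper packages this as $T=\tau_{K}\circ\tau\circ S$), and the operator equality by checking that each of $TA$, $BK$, $(A-\lambda)T$ is bounded from $H^{3/4+s}$ to $H^{-3/4+s}$ for $s\in(-1/4,1/4)$ and then verifying the identity on the eigenvectors $\varphi_{n}$ through the same splitting $(A-\lambda)\tau q_{n}=\lambda_{n}\tau q_{n}-B$. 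Your explicit justification of the density extension and of the admissible range of $s$ only makes more detailed what the paper leaves implicit.
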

\begin{proof}[Proof of Lemma \ref{T:boundedopeq}]
We start by proving that $T$ is a bounded operator using the previous section. Let $r\in\textcolor{black}{(1/2-\alpha,\alpha-1/2)}$. We introduce the operator $\tau_{K}$
    \begin{equation}\label{def:op:tauK}
    \tau_{K} : n^{-r}\tau q_{n}\rightarrow (-K_{n})n^{-r}\tau q_{n},
\end{equation}
which is well defined since $(n^{-r}\tau q_{n})_{n\in\mathbb{N}^{*}}$ is a Riesz basis of $\Hdiv^{r}$ (recall that $(n^{-r} q_{n})_{n\in\mathbb{N}^{*}}$ is a Riesz basis of $\Hdiv^{r}$ \textcolor{black}{for $r\in(1/2-\alpha,\alpha-1/2)$} and $\tau$ is an isomorphism from $\Hdiv^{r}$ to $\Hdiv^{r}$ \textcolor{black}{for $r \in \R$}). From the definition of $T$ given in \eqref{def-T}, we have,
\begin{equation}\label{Tcircdef}
    T = \tau_{K}\circ\tau\circ S.
\end{equation}
Since both $\tau$ and $S$ are bounded from $\Hdiv^{r}$ to itself, it suffices to show that $\tau_{K}$ is bounded to show that $T$ is bounded from $\Hdiv^{r}$ to itself. Since we showed in the previous section that $(K_{n})_{n\in\mathbb{N}^{*}}$ is uniformly bounded from above (see Lemma \ref{lemkncomp}), and $(n^{-r}\tau q_{n})_{n\in\mathbb{N}^{*}}$ is a Riesz basis of $\Hdiv^{r}$, we have that for any $f= \sum_{n\in \N^*} f_n n^{-r}\tau q_{n}\in \Hdiv^r$, 
\begin{equation*}
 \|\tau_{K} f\|_{\Hdiv^{r}}=   \left\|\sum_{n\in \N^*} (- K_n)f_n n^{-r}\tau q_{n}\right\|_{\Hdiv^{r}}\lesssim  \|(f_n K_n)_n\|_{l^2}\lesssim \|(f_n)_n\|_{l^2}\lesssim \|f\|_{\Hdiv^{r}}.
\end{equation*}
Thus $\tau_{K}$ is bounded from $\Hdiv^{r}$ to itself and so is $T$.\\

Let us now prove the operator equality \eqref{op-eq}. We proceed by ensuring that the modified operator equality,
\[TA+BK=(A-\lambda I)T,\]
which have been using to define $T$ and $K$, indeed holds on the right array of functional spaces.

Observe first that all terms make sense: indeed, $K:n^{-r}\varphi_{n}\mapsto n^{-r}K_{n}$ is a bounded operator from $\Hdiv^{\textcolor{black}{\alpha/2}+s}$ to $\mathbb{C}$ for any $s\in\textcolor{black}{(-(\alpha-1)/2,(\alpha-1)/2)}$. On the other hand, $B\in \Hdiv^{-\textcolor{black}{\alpha/2}}$ and in fact in $\Hdiv^{-\textcolor{black}{\alpha/2}+s}$ for any $s\in(\textcolor{black}{-(\alpha-1)/2,(\alpha-1)/2})$ since $(b_{n})_{n\in\mathbb{N}^{*}}$ is uniformly bounded.
So $B$ can be formally seen as an operator from $\mathbb{C}$ to $\Hdiv^{-\textcolor{black}{\alpha/2}+s}$ for any $s\in(\textcolor{black}{-(\alpha-1)/2,(\alpha-1)/2})$. Thus, $BK$ is a bounded operator from $\Hdiv^{\textcolor{black}{\alpha/2}+s}$ to $\Hdiv^{-\textcolor{black}{\alpha/2}+s}$ for any $s\in(\textcolor{black}{-(\alpha-1)/2,(\alpha-1)/2})$. Similarly one can check that both $AT$ and $TA$ are bounded operators from $\Hdiv^{\textcolor{black}{\alpha/2}+s}$ to $\Hdiv^{-\textcolor{black}{\alpha/2}+s}$ when $s\in(\textcolor{black}{-(\alpha-1)/2,(\alpha-1)/2})$ (since $T$ is a bounded operator in $\Hdiv^{r}$ for any $r\in(\textcolor{black}{1/2-\alpha,\alpha-1/2})$). To show \eqref{op-eq} it suffices to check that it holds against $n^{-\textcolor{black}{\alpha/2}-s}\varphi_{n}$ for any $n\in\mathbb{N}^{*}$ and $s\in(\textcolor{black}{-(\alpha-1)/2,(\alpha-1)/2})$. Since the operators are linear, it suffices to verify that it holds against $\varphi_{n}$ for any $n\in\mathbb{N}^{*}$. From the definition of $T$ (see \eqref{def-T}) we have (in $\Hdiv^{-\textcolor{black}{\alpha/2}+s}$)
\begin{equation*}
\begin{split}
\left[(TA+BK)\right.&\left.-(A-\lambda I)T\right]\varphi_{n} =\lambda_{n}T\varphi_{n}+BK_{n}-(A-\lambda I)T\varphi_{n}\\
&=\lambda_{n}(-K_{n})\left(\sum\limits_{p\in\mathbb{N}^{*}}\frac{b_{p}\varphi_{p}}{\lambda_{n}-\lambda_{p}+\lambda}\right)+BK_{n}-(-K_{n})\left(\sum\limits_{p\in\mathbb{N}^{*}}\frac{b_{p}(\lambda_{p}-\lambda)\varphi_{p}}{\lambda_{n}-\lambda_{p}+\lambda}\right).\\
&=(-K_{n})\left(\sum\limits_{p\in\mathbb{N}^{*}}b_{p}\varphi_{p}\right)+BK_{n}  \\
&= 0.
\end{split}
\end{equation*}
Then, using the $TB=B$ condition{\color{black}, which holds in $\Hdiv^{-\alpha/2+s}$ with $s\in (-(\alpha-1)/2,(\alpha-1)/2)$ thanks to Lemma \ref{defkn},} we have,
\[TA+BK=T(A+BK), {\color{black} \; \text{ in }\; \mathcal{L}(\Hdiv^{\textcolor{black}{\alpha/2}+s}; \Hdiv^{-\textcolor{black}{\alpha/2}+s}), \; \forall s\in \left(\textcolor{black}{-\frac{\alpha-1}{2}, \frac{\alpha-1}{2}}\right),}\]
which ends the proof of Lemma \ref{T:boundedopeq}.
\end{proof}

\subsection{Step (6): Fredholm operator property in $\Hdiv^{-\alpha/2}$}
We now prove the following lemma. 
\begin{lem}
\label{lem:Tfredholm}
$T$ defined by \eqref{def-T} with $K_{n}$ defined by Lemma \ref{lem:defk} is a Fredholm operator (of index 0) from $\Hdiv^{-\textcolor{black}{\alpha/2}}$ to $\Hdiv^{-\textcolor{black}{\alpha/2}}$.
\end{lem}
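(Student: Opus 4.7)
The plan is to use the factorization $T = \tau_K \circ \tau \circ S$ already established in \eqref{Tcircdef} of Subsection \ref{subsec:step5}, together with the fine information on $K_n$ provided by Lemma \ref{lemkncomp}, to split $T$ into an isomorphism plus a compact operator. Since $\tau$ is an isomorphism of $H^{-3/4}$ (because $b_n$ is uniformly bounded above and below by \eqref{condB}) and $S$ is an isomorphism of $H^{-3/4}$ by Proposition \ref{qnr:Riesz}, the composition $\tau \circ S$ is an isomorphism of $H^{-3/4}$. Thus it suffices to show that $\tau_K$ is Fredholm of index zero on $H^{-3/4}$.

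To analyse $\tau_K$, I would use the Riesz basis $(n^{3/4}\tau q_n)_{n\in\mathbb{N}^*}$ of $H^{-3/4}$ given by Proposition \ref{qnr:Riesz} combined with the isomorphism property of $\tau$. Using the identity $-K_n b_n = \lambda + k_n$ from \eqref{defkn}, I would split the multiplier as $-K_n = \lambda/b_n + k_n/b_n$, and correspondingly write $\tau_K = D_\lambda + D_k$ where $D_\lambda$ and $D_k$ are the diagonal multipliers defined on the basis by
\begin{equation*}
D_\lambda(n^{3/4}\tau q_n) := \frac{\lambda}{b_n}\,n^{3/4}\tau q_n, \qquad D_k(n^{3/4}\tau q_n) := \frac{k_n}{b_n}\,n^{3/4}\tau q_n.
\end{equation*}
Since the sequence $\lambda/b_n$ is bounded above and below uniformly in $n$ (thanks to \eqref{condB}), $D_\lambda$ is an isomorphism of $H^{-3/4}$: the Riesz basis property converts boundedness of the multiplier into boundedness of $D_\lambda$, and boundedness of $1/(\lambda/b_n) = b_n/\lambda$ into boundedness of the inverse.

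For the compactness of $D_k$, the key input is \eqref{kneps}, which tells us $k_n n^\varepsilon \in \ell^\infty$ for any $\varepsilon \in [0,1/2)$; in particular $k_n/b_n \to 0$ since $b_n$ is bounded away from zero. Approximating $D_k$ by the finite rank truncations $D_k^{(N)}$ that act as $D_k$ on the first $N$ basis vectors and as $0$ beyond, the Riesz basis bound yields
\begin{equation*}
\|D_k - D_k^{(N)}\|_{\mathcal{L}(H^{-3/4})} \lesssim \sup_{n>N}\left|\frac{k_n}{b_n}\right| \xrightarrow[N\to\infty]{} 0,
\end{equation*}
so $D_k$ is compact as a norm limit of finite rank operators. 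Consequently $\tau_K = D_\lambda (\mathrm{Id} + D_\lambda^{-1} D_k)$ with $D_\lambda^{-1} D_k$ compact, so $\tau_K$ is Fredholm of index $0$ on $H^{-3/4}$, and the composition $T = \tau_K \circ \tau \circ S$ inherits the same property. I do not expect any serious obstacle here beyond carefully checking the Riesz-basis-induced norm bounds; the real content was already invested in Lemmas \ref{lem:defk} and \ref{lemkncomp} which produced the decay $k_n \to 0$ and the decomposition $-K_n b_n = \lambda + k_n$.
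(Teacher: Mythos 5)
Your proof is correct, and it takes a route that is organized differently from the paper's. The paper proves the lemma by showing directly that $T_{c}=T-\mathrm{Id}$ is compact on $H^{-3/4}$: starting from $T\varphi_n=(-K_n)\tau q_n$ and the splitting $-K_n b_n=\lambda+k_n$, it recombines the $\lambda/b_n$ part with $-\varphi_n$ using $q_n=\varphi_n/\lambda+(\text{off-diagonal part})$ and arrives at $T_{c}=\lambda\,\tau\circ S_{c}\circ\tau^{-1}+k\circ\tau^{-1}$, so the conclusion rests on the compactness of $S_{c}$ (Lemma \ref{lem4}) and of the operator $k$ (Lemma \ref{lemkncomp}, point (2)). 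You instead exploit the factorization $T=\tau_{K}\circ\tau\circ S$ of \eqref{Tcircdef}: since $\tau\circ S$ is an isomorphism of $H^{-3/4}$ (Proposition \ref{qnr:Riesz} and \eqref{condB}), everything reduces to $\tau_{K}$, which you split as a diagonal multiplier $D_{\lambda}$ with symbol $\lambda/b_n$ (an isomorphism, since $\lambda/b_n$ is bounded above and below) plus a diagonal multiplier $D_{k}$ with symbol $k_n/b_n\to 0$, compact as a norm limit of finite-rank truncations in the Riesz basis $(n^{3/4}\tau q_n)$. Your argument therefore needs only the Riesz basis property, the bounds on $b_n$, and the decay $k_n\to 0$ from \eqref{kneps}; it does not invoke the compactness of $S_{c}$ or of the operator $k$, and the multiplier compactness step is more elementary than the weighted estimates behind Lemma \ref{lem4}. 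What the paper's additive decomposition buys in exchange is the slightly stronger structural statement that $T$ is a compact perturbation of the identity, whereas you obtain $T$ as a compact perturbation of the isomorphism $D_{\lambda}\circ\tau\circ S$; for the lemma as stated, and for its use in Step (7) (Fredholm alternative plus $\ker T^{*}=\{0\}$), index-zero Fredholmness is all that is required, so both routes are fully adequate.
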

\begin{proof}[Proof of Lemma \ref{lem:Tfredholm}]
In order to show this\footnote{One could wonder: why showing this in $\Hdiv^{-\textcolor{black}{\alpha/2}}$ while this seems to hold in $\textcolor{black}{H}$ as well? This will become clearer in the next section: it is easier to show first that $T$ is an isomorphism in $\Hdiv^{-\textcolor{black}{\alpha/2}}$ rather than in $\textcolor{black}{H}$ and then deduce that $T$ is an isomorphism in $\textcolor{black}{H}$.} it suffices to show that $T_{c} = T-Id$ is a compact operator in $\Hdiv^{-\textcolor{black}{\alpha/2}}$. Let $f\in \Hdiv^{-\textcolor{black}{\alpha/2}}$, and denote $\left( f,n^{\textcolor{black}{\alpha/2}}\varphi_{n}\right)_{\Hdiv^{-\textcolor{black}{\alpha/2}}}= f_{n}\in \ell^2$ such that,
\begin{equation*}
    f =\sum\limits_{n\in\mathbb{N}^{*}} f_{n}(n^{\textcolor{black}{\alpha/2}}\varphi_{n}).
\end{equation*}
We have, using the fact that $T$ is bounded from $\Hdiv^{-\textcolor{black}{\alpha/2}}$ to $\Hdiv^{-\textcolor{black}{\alpha/2}}$,
\begin{equation}
\label{eq:tc}
\begin{split}
    T_{c}f &= Tf-f\\
    &= \sum\limits_{n\in\mathbb{N}^{*}}f_{n} (-K_{n})n^{\textcolor{black}{\alpha/2}}\tau q_{n}-\sum\limits_{n\in\mathbb{N}^{*}}f_{n}n^{\textcolor{black}{\alpha/2}} \varphi_{n}\\
    &= \left(\sum\limits_{n\in\mathbb{N}}\frac{f_{n}}{b_{n}}n^{\textcolor{black}{\alpha/2}}\lambda \tau q_{n}\right) -\left(\sum\limits_{n\in\mathbb{N}^{*}}f_{n}n^{\textcolor{black}{\alpha/2}} \varphi_{n}\right)+\left(\sum\limits_{n\in\mathbb{N}}\frac{f_{n}}{b_{n}}n^{\textcolor{black}{\alpha/2}}k_{n} \tau q_{n}\right).
\end{split}    
\end{equation}
Now observe that, as previously,
\begin{equation*}
\begin{split}
&\left(\sum\limits_{n\in\mathbb{N}^{*}}\frac{f_{n}}{b_{n}}n^{\textcolor{black}{\alpha/2}}\lambda \tau q_{n}\right)-\left(\sum\limits_{n\in\mathbb{N}^{*}}f_{n}n^{\textcolor{black}{\alpha/2}} \varphi_{n}\right)\\
&=\left(\sum\limits_{n\in\mathbb{N}^{*}}\frac{f_{n}}{b_{n}}n^{\textcolor{black}{\alpha/2}}\lambda\sum\limits_{p\in\mathbb{N}^{*}\setminus\{n\}}\frac{b_{p}\varphi_{p}}{\lambda_{n}-\lambda_{p}+\lambda}\right)+\left(\sum\limits_{n\in\mathbb{N}^{*}}f_{n}n^{\textcolor{black}{\alpha/2}}\lambda\frac{\varphi_{n}}{\lambda}\right)-\left(\sum\limits_{n\in\mathbb{N}^{*}}f_{n}n^{\textcolor{black}{\alpha/2}} \varphi_{n}\right)\\
&=\lambda\left(\sum\limits_{n\in\mathbb{N}^{*}}\frac{f_{n}}{b_{n}}n^{\textcolor{black}{\alpha/2}}\sum\limits_{p\in\mathbb{N}^{*}\setminus\{n\}}\frac{b_{p}\varphi_{p}}{\lambda_{n}-\lambda_{p}+\lambda}\right).
\end{split}
\end{equation*}
Together with \eqref{eq:tc} and the definition of the operator $k$ \eqref{def-op-k}, this gives
\begin{equation*}
T_{c}f = \lambda \left(\sum\limits_{n\in\mathbb{N}^{*}}\frac{f_{n}}{b_{n}}n^{\textcolor{black}{\alpha/2}}\sum\limits_{p\in\mathbb{N}^{*}\setminus\{n\}}\frac{b_{p}\varphi_{p}}{\lambda_{n}-\lambda_{p}+\lambda}\right)+k(\tau^{-1}f).
\end{equation*}
From the definition $S_{c}$ given by Lemma \ref{lem4} and $\tau$ given in \eqref{def-tau}, this can be expressed as
\begin{equation*}
    T_{c}f = \lambda \tau\circ S_{c}\circ \tau^{-1} f+k\circ \tau^{-1}f.
\end{equation*}
We know from Lemma \ref{lemkncomp} that $k$ is a compact operator from $\Hdiv^{-\textcolor{black}{\alpha/2}}$ to $\Hdiv^{-\textcolor{black}{\alpha/2}}$ and $\tau^{-1}$ is an isomorphism from $\Hdiv^{-\textcolor{black}{\alpha/2}}$ to $\Hdiv^{-\textcolor{black}{\alpha/2}}$ thus $k \circ \tau^{-1}$ is a compact operator from $\Hdiv^{-\textcolor{black}{\alpha/2}}$ to $\Hdiv^{-\textcolor{black}{\alpha/2}}$. Similarly $S_{c}$ is a compact operator from $\Hdiv^{-\textcolor{black}{\alpha/2}}$ to $\Hdiv^{-\textcolor{black}{\alpha/2}}$ from Lemma \ref{lem4}, and therefore $\tau \circ S_{c}\circ \tau^{-1}$ is a compact operator from $\Hdiv^{-\textcolor{black}{\alpha/2}}$ to $\Hdiv^{-\textcolor{black}{\alpha/2}}$.
Hence, 
$T_{c}$ is 
a compact operator from $\Hdiv^{-\textcolor{black}{\alpha/2}}$ to $\Hdiv^{-\textcolor{black}{\alpha/2}}$. This ends the proof of Lemma \ref{lem:Tfredholm}.
\end{proof}
\subsection{Step (7): invertibility in $\Hdiv^{-\alpha/2}$}\label{subsec:step8}
We now turn to the following lemma. 
\begin{lem}
\label{lem:iso-34}
$T$ defined by \eqref{def-T} with $K_{n}$ given by Lemma \ref{lem:defk} defines an isomorphism from $\Hdiv^{-\textcolor{black}{\alpha/2}}$ to $\Hdiv^{-\textcolor{black}{\alpha/2}}$.
\end{lem}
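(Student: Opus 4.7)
Since Lemma \ref{lem:Tfredholm} furnishes $T$ as a Fredholm operator of index $0$ on $H^{-3/4}$, it suffices to prove injectivity. My strategy is to show, by contradiction, that any non-trivial element of $\ker T$ yields a finite-dimensional invariant subspace of $A + BK$, whose eigenvalue analysis ultimately conflicts with the zero--pole balance of a canonical meromorphic function built from the $(k_n)_n$.

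First, I upgrade the regularity of $\ker T$. The proof of Lemma \ref{lem:Tfredholm} writes $T - I = \lambda\,\tau\circ S_c\circ\tau^{-1} + k\circ\tau^{-1}$, where $S_c$ (Lemma \ref{lem4}) and $k$ (Lemma \ref{lemkncomp}) each map $H^r$ into $H^{r+\varepsilon(r)}$ for some $\varepsilon(r)>0$. Iterating the identity $f = -(T-I)f$ for $f \in \ker T$ places $\ker T$ inside $\bigcap_{r<1} H^r$, in particular inside $H^{3/4}$, the regularity required to invoke the operator equality of Lemma \ref{T:boundedopeq}.

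Next, I show $\ker T$ is invariant under $A+BK$ and extract an eigenvalue. The operator equality and the $TB = B$ identity (Lemma \ref{lem:defk}) combine to give $T(A+BK)f = TAf + (Kf)\,TB = -BKf + (Kf)\,B = 0$ for every $f \in \ker T$. Since $\ker T$ is finite-dimensional, $(A+BK)|_{\ker T}$ has an eigenvalue $\mu\in\mathbb{C}$ with eigenvector $u\neq 0$. If $Ku = 0$, then $Au = \mu u$ forces $\mu = \lambda_n$ and $u \propto \varphi_n$, and $Tu = -K_n\tau q_n = 0$ yields $K_n = 0$, i.e.\ $\lambda + k_n = 0$. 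If $Ku \neq 0$, a projection of $(A-\mu)u = -BKu$ on an offending eigenfunction excludes $\mu \in \sigma(A)$, so $u \propto (A-\mu)^{-1}B$; rearranging the operator equality into
\[
T(A-\mu)^{-1}B = (A-\lambda-\mu)^{-1}B\,[1 - h(\mu)], \qquad h(\mu) := \sum_{n \in \mathbb{N}^*}\frac{\lambda+k_n}{\lambda_n-\mu},
\]
(and handling the boundary case $\mu + \lambda \in \sigma(A)$ by direct expansion of $T(A-\mu)^{-1}B$, which produces the term $-h'(\mu)\,b_{n_0}\varphi_{n_0}$), the equation $Tu=0$ forces either $h(\mu) = 1$ at a new $\mu$ outside $\{\lambda_n-\lambda\}$, or a multiple zero of $h - 1$ at some $\mu = \lambda_{n_0} - \lambda$.

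The hard step, which I expect to be the main obstacle, is to close all three scenarios by a zero--pole count on $h-1$. Expanding $TB = B$ along the basis $(\varphi_p)_p$ yields the key identity $h(\lambda_p - \lambda) = 1$ for every $p \in \mathbb{N}^*$, so $h-1$ vanishes on the full sequence $\{\lambda_p - \lambda\}_p$. Using the decay $h(\mu) = o(1)$ and $h'(\mu) = O(|\mu|^{-4/3})$ at infinity (consequences of $|\lambda_n|\sim n^{3/2}$ together with Lemma \ref{lem:tech3}), the argument principle applied to $h-1$ on circles $|z|=R$ gives $\oint_{|z|=R}(h-1)'/(h-1)\,dz \to 0$, so the numbers of zeros and poles of $h-1$ in $\{|z|\leq R\}$ agree for all sufficiently large $R$. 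An explicit asymptotic count, based on the identity $|\lambda_p-\lambda|^2 = |\lambda_p|^2+\lambda^2$ and the gap structure of $(\lambda_n)_n$, shows that each of the three scenarios above strictly breaks this equality: a missing pole at $\lambda_{n_0}$ (Case $Ku=0$), an extra simple zero outside $\{\lambda_n-\lambda\}$, or a doubled zero at $\lambda_{n_0}-\lambda$ each adds at least one to the excess of zeros over poles, a contradiction. Hence $\ker T = \{0\}$, and the Fredholm property of Lemma \ref{lem:Tfredholm} upgrades this to the claim.
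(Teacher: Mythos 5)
Your proposal is correct in substance but takes a genuinely different route from the paper. The paper also starts from the Fredholm property of Lemma \ref{lem:Tfredholm}, but works with $\ker T^{*}$: it first constructs (by Kato perturbation theory, or a direct computation for large real $\rho$) a $\rho$ such that $A+BK+\lambda+\rho$ and $A+\rho$ are invertible from $H^{3/4}$ to $H^{-3/4}$, rewrites the operator equality in resolvent form to show that $(A^{*}+\overline{\rho})^{-1}$ leaves the finite-dimensional space $\ker T^{*}$ invariant, deduces that some $n^{3/4}\varphi_{n}$ lies in $\ker T^{*}$, and concludes by pairing $TB=B$ against $\varphi_{n}$, which would force $b_{n}=0$. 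You instead work with $\ker T$ directly: the bootstrap through $T-\mathrm{Id}=\lambda\,\tau\circ S_{c}\circ\tau^{-1}+k\circ\tau^{-1}$ to place $\ker T$ in $H^{3/4}$ is legitimate (the gains $\varepsilon(r)$ in Lemmas \ref{lem4} and \ref{lemkncomp} are bounded below on compact subintervals of $(-1,1)$), the invariance $T(A+BK)f=0$ and the three scenarios are correct, and your interpolation identity $h(\lambda_{p}-\lambda)=1$ is exactly the projection of $TB=B$ on $\varphi_{p}$. What your route buys: it bypasses the paper's Step 1 (the choice of $\rho$) and delivers $K_{n}\neq 0$ as a by-product, which the paper only obtains in Step (8) after invertibility; what it costs is the complex-analytic counting, and that is also the one place where your write-up is too loose. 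The decay statements $h(\mu)=o(1)$, $h'(\mu)=O(|\mu|^{-4/3})$ cannot hold uniformly at infinity, since the poles $\lambda_{n}$ and the zeros $\lambda_{p}-\lambda$ accumulate along (a shifted copy of) the imaginary axis, so the argument principle cannot be invoked on all sufficiently large circles. The fix is standard: choose radii $R_{j}$ midway between consecutive $|\lambda_{n}|$, so that every point of $\{|z|=R_{j}\}$ is at distance $\gtrsim R_{j}^{1/3}$ from each pole; one then checks $\sup_{|z|=R_{j}}|h|\lesssim R_{j}^{-1/3}\log R_{j}\to 0$, hence $h-1$ has winding number zero and $Z(R_{j})=P(R_{j})$, which is all your count needs (the modulus shift $|\lambda_{p}-\lambda|-|\lambda_{p}|=O(1/|\lambda_{p}|)$ is indeed negligible against the gap $\sim R_{j}^{1/3}$, so each of your three scenarios creates an excess of zeros over poles and yields the contradiction).
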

\begin{proof}[Proof of Lemma \ref{lem:iso-34}]
We already know that $T$ is a bounded operator from $\Hdiv^{-\textcolor{black}{\alpha/2}}$ to $\Hdiv^{-\textcolor{black}{\alpha/2}}$ and that $T$ is also a Fredholm operator of index 0. Therefore, using the fact that the adjoint of a Fredholm operator is still a Fredholm operator from Schauder's theorem, and that $dim (ker(T)) =dim(coker(T))= dim (ker(T^{*}))<+\infty$, it suffices to show that, 
\begin{equation}
\label{kert*=0}
ker(T^{*}) = \{0\} \;  \textrm{ in } \Hdiv^{-\textcolor{black}{\alpha/2}},
\end{equation}
to prove that $T$ is an isomorphism from $\Hdiv^{-\textcolor{black}{\alpha/2}}$ to $\Hdiv^{-\textcolor{black}{\alpha/2}}$, where $T^*$ is the adjoint of $T$ (taken as an operator from $\Hdiv^{-\textcolor{black}{\alpha/2}}$ for $\Hdiv^{-\textcolor{black}{\alpha/2}}$). From that point, the method is inspired by  what is done for the Schrödinger equation or the heat equation in \cite{CGM, Gagnon-Hayat-Xiang-Zhang}: the proof is composed of three main steps:
\begin{enumerate}
    \item[1.] There exists $\rho\in\mathbb{C}$ such that both
           $A+ \textcolor{black}{B}K+\lambda \textcolor{black}{Id} +\rho \textcolor{black}{Id}$ and $A+ \rho \textcolor{black}{Id}$ are invertible operator from $\Hdiv^{\textcolor{black}{\alpha/2}}$ to $\Hdiv^{-\textcolor{black}{\alpha/2}}$.
    \item[2.] For such a $\rho$, if $ker(T^{*})\neq \{0\}$, then $(A+\rho \textcolor{black}{Id})^{-1}$ has an eigenvector $h$ which belongs to $ker(T^*)$.
\item[3.] No eigenvector of $(A+\rho \textcolor{black}{Id})^{-1}$ belong to $ker(T^{*})$.
\end{enumerate}
From {\it Step 2} and {\it Step 3} we deduce that $ker(T^{*})= \{0\}$. Given that this is very similar to what is done in \cite{CGM,Gagnon-Hayat-Xiang-Zhang}, the rigorous proof is postponed to the Appendix \ref{isomorph}.  In addition, we also provide another direct method to prove the first step instead of the method using the perturbation theory of operators that is introduced in \cite{CGM}.
\end{proof}
\subsection{Step (8): invertibility on a range of Sobolev spaces}
Now that we know from Lemma \ref{lem:iso-34} that $T$ is an isomorphism from $\Hdiv^{-\textcolor{black}{\alpha/2}}$ to $\Hdiv^{-\textcolor{black}{\alpha/2}}$, we are going to show the following proposition.
\begin{prop}
\label{prop:isot}
For any $r\in\textcolor{black}{(1/2-\alpha,\alpha-1/2)}$, the operator $T$ given by \eqref{def-T} is an isomorphism from $\Hdiv^{r}$ to $\Hdiv^{r}$. In particular $T$ is an isomorphism from $\textcolor{black}{H}$ to $\textcolor{black}{H}$.
\end{prop}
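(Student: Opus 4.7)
The plan is to extend the isomorphism property from $H^{-3/4}$, already established in Lemma \ref{lem:iso-34}, to the entire range $r \in (-1,1)$ by a two-step argument: first establish that $T$ is a Fredholm operator of index zero on each $H^r$, then prove triviality of its kernel and invoke the Fredholm alternative.

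For the Fredholm property, I would rerun the computation of the proof of Lemma \ref{lem:Tfredholm}, but on the Riesz basis $(n^{-r}\varphi_n)_{n\in\mathbb{N}^*}$ of $H^r$ instead of $(n^{3/4}\varphi_n)_{n\in\mathbb{N}^*}$. The same algebraic manipulation yields the decomposition
\begin{equation*}
T - \mathrm{Id} \;=\; \lambda\,\tau\circ S_c\circ \tau^{-1} \;+\; k\circ \tau^{-1} \qquad \text{on } H^r.
\end{equation*}
Lemma \ref{lem4} gives $S_c:H^r \to H^{r+\varepsilon(r)}$ continuously, and Lemma \ref{lemkncomp} gives the same smoothing for $k$; since $\tau$ is an isomorphism on each Sobolev space, and $H^{r+\varepsilon}\hookrightarrow H^r$ is compact, the remainder $T-\mathrm{Id}$ is a compact operator on $H^r$. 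Hence $T$ is a compact perturbation of the identity, and in particular Fredholm of index $0$ on $H^r$ for every $r\in(-1,1)$.

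It remains to show $\ker T|_{H^r} = \{0\}$. For $r\in[-3/4,1)$ this is immediate: the embedding $H^r \hookrightarrow H^{-3/4}$ is continuous, so any $f$ with $Tf = 0$ in $H^r$ lies in $\ker T|_{H^{-3/4}}$, which is trivial by Lemma \ref{lem:iso-34}. The more delicate case is $r\in(-1,-3/4)$, which I would handle by bootstrap. If $f\in H^r$ satisfies $Tf=0$, then $f = -(T-\mathrm{Id})f$, and by the smoothing property of $T-\mathrm{Id}$ shown above, $f \in H^{r+\varepsilon(r)}$. Iterating and using that $\varepsilon(r)$ can be chosen uniformly bounded below on any compact subinterval of $(-1,1)$, as visible in the explicit choices $\varepsilon = \min(r+1, 3/10)$ made in the proof of Lemma \ref{lem4}, after finitely many iterations we land in $H^{-3/4}$ and conclude with Lemma \ref{lem:iso-34}.

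The main obstacle I anticipate is ensuring that the smoothing gain $\varepsilon(r)$ does not degenerate along the bootstrap chain as $r$ approaches $-1$. This is a routine inspection of the explicit dependence of $\varepsilon$ on $r$ in the proofs of Lemmas \ref{lem4} and \ref{lemkncomp}, but it is the one place where the quantitative form of those lemmas is used. Once kernel triviality is secured, Fredholm index zero automatically yields surjectivity, hence $T$ is an isomorphism from $H^r$ to $H^r$ for every $r\in(-1,1)$; specialising to $r=0$ gives the $L^2$ statement.
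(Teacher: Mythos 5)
Your proposal is correct, but it follows a genuinely different route from the paper. The paper does not redo a Fredholm argument on each $H^r$: it uses the $H^{-3/4}$-invertibility of Lemma \ref{lem:iso-34} only to deduce that every coefficient $K_n$ is nonzero (by solving $Th=m^{3/4}\tau q_m$ and reading off $h_m(-K_m)=1$ against the Riesz basis $(n^{3/4}\tau q_n)$), then combines this with $k_n\to 0$, i.e. $|K_nb_n|=|\lambda+k_n|\geq\lambda/2$ for large $n$, to get a uniform lower bound on $|K_n|$; since $|K_n|$ is also bounded above (Lemma \ref{lemkncomp}), the factorization $T=\tau_K\circ\tau\circ S$ exhibits $T$ as a composition of three isomorphisms of $H^r$ for every $r\in(-1,1)$, using Proposition \ref{qnr:Riesz}. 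Your argument instead extends the compact-perturbation decomposition of Lemma \ref{lem:Tfredholm} to every $H^r$ (legitimate, since the computation only uses the Riesz basis property, the boundedness of $b_n^{\pm1}$, and the smoothing of $S_c$ and $k$ from Lemmas \ref{lem4} and \ref{lemkncomp}), and then kills the kernel by embedding into $H^{-3/4}$ when $r\geq-3/4$ and by a smoothing bootstrap $f=-(T-\mathrm{Id})f\in H^{r+\varepsilon(r)}$ when $r\in(-1,-3/4)$; the bootstrap terminates because the gain is bounded below on the compact interval $[r,-3/4]$, exactly the point you flag. What the paper's route buys is the explicit uniform lower bound on $|K_n|$ (a quantitative fact reused implicitly in the structure of $\tau_K$) and a one-line conclusion once the Riesz bases are in hand; what your route buys is that you never need $K_n\neq 0$ or the lower bound as an input — invertibility on all $H^r$ comes out of soft Fredholm-plus-regularity reasoning, at the cost of checking consistency of $T$ across the scale of spaces and tracking $\varepsilon(r)$ along the iteration. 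Both proofs are complete and rest on the same prior results (Lemmas \ref{lem:iso-34}, \ref{lem4}, \ref{lemkncomp} and Proposition \ref{qnr:Riesz}).
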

\begin{proof}[Proof of Proposition \ref{prop:isot}]
We first show that for any $n\in\mathbb{N}^{*}$, $K_{n}\neq 0$. Let $m\in\mathbb{N}^{*}$. Since $T$ is an isomorphism from $\Hdiv^{-\textcolor{black}{\alpha/2}}$ to $\Hdiv^{-\textcolor{black}{\alpha/2}}$ from Lemma \ref{lem:iso-34} there exists $h\in \Hdiv^{-\textcolor{black}{\alpha/2}}$ such that, 
\begin{equation}
\label{defhn0}
T h  = m^{\textcolor{black}{\alpha/2}}\tau q_{m}.
\end{equation}
As $h\in \Hdiv^{-\textcolor{black}{\alpha/2}}$ there exists $(h_{n})_{n\in \mathbb{N}^{*}}\in l^{2}$ such that,
\begin{equation*}
   h = \sum\limits_{n\in\mathbb{N}^{*}} h_{n}n^{\textcolor{black}{\alpha/2}}\varphi_{n},
\end{equation*}
hence,
\begin{equation*}
    Th = \sum\limits_{n\in\mathbb{N}^{*}} (-K_{n})h_{n}n^{\textcolor{black}{\alpha/2}}\tau q_{n}.
\end{equation*}
As $(n^{\textcolor{black}{\alpha/2}} q_{n})_{n\in\mathbb{N}^{*}}$ is a Riesz basis of $\Hdiv^{-\textcolor{black}{\alpha/2}}$ and $\tau:\; \Hdiv^{-\textcolor{black}{\alpha/2}}\rightarrow \Hdiv^{-\textcolor{black}{\alpha/2}}$ is an isomorphism, then $(n^{\textcolor{black}{\alpha/2}}\tau q_{n})_{n\in\mathbb{N}^{*}}$ is also a Riesz basis of $\Hdiv^{-\textcolor{black}{\alpha/2}}$ which means, together with \eqref{defhn0}, that,
\begin{equation*}
h_{m}(-K_{m}) = 1, 
\end{equation*}
hence $K_{m}\neq 0$. Since it is true for any $m\in \N^*$, we deduce that $K_n \neq 0$, for any $n\in \N^*$.\\

Now, we are going to show that $(K_{n})_{n\in\mathbb{N}^{*}}$ is bounded \textcolor{black}{from} below. Note that from \eqref{defkn},
\begin{equation*}
    |K_{n}b_{n}|=|\lambda+k_{n}|,
\end{equation*}
where $(k_{n}n^{\varepsilon})_{n\in\mathbb{N}^{*}}\in l^{\infty}$, for some $\varepsilon>0$ (see \eqref{kneps}), which means that $k_{n}\rightarrow 0$ when $n\rightarrow +\infty$. Hence, there exists $n_{0}\in\mathbb{N}^{*}$ such that for any $n\geq n_{0}$,
\begin{equation}
\label{boundinfkn}
    |K_{n}b_{n}|\geq \frac{\lambda}{2}>0.
\end{equation}
As $K_{n}$ and $b_{n}$ do not vanish, $\min_{n\leq n_{0}}(|K_{n}b_{n}|)>0$. This together with \eqref{boundinfkn} means that $|K_{n}b_{n}|$ is uniformly bounded \textcolor{black}{from} below. Since $b_{n}$ is uniformly bounded we deduce that there exists a constant $c>0$ independent of $n$ such that,
\begin{equation*}
    |K_{n}|\geq c>0,\;\;\forall\; n\in\mathbb{N}^{*}.
\end{equation*}
We can now conclude: since $(K_{n})_{n\in\mathbb{N}^{*}}$ is uniformly bounded \textcolor{black}{from} below and above (see Lemma \ref{lemkncomp}), the operator,
\begin{equation*}
    \tau_{K} : n^{-r}\tau q_{n}\mapsto (-K_{n})n^{-r}\tau q_{n},
\end{equation*}
is an isomorphism from $\Hdiv^{r}$ to $\Hdiv^{r}$, for any $r\in(\textcolor{black}{1/2-\alpha,\alpha-1/2})$. We used here that $(n^{-r}\tau q_{n})_{n\in\mathbb{N}^{*}}$ is a Riesz basis of $\Hdiv^{r}$, for any $r\in(\textcolor{black}{1/2-\alpha,\alpha-1/2})$. Using the same fact, the operator,
\begin{equation*}
n^{-r}\varphi_{n} \mapsto n^{-r}\tau q_{n},
\end{equation*}
is also an isomorphism from $\Hdiv^{r}$ to $\Hdiv^{r}$, for any $r\in(\textcolor{black}{1/2-\alpha,\alpha-1/2})$. Hence, composing using $\tau_{K}$,
\begin{equation*}
    T : n^{-r}\varphi_{n} \mapsto (-K_{n})n^{-r}\tau q_{n},
\end{equation*}
is an isomorphism from $\Hdiv^{r}$ to $\Hdiv^{r}$. \textcolor{black}{In particular, given that $\Hdiv^{0}=H$, it is an isomorphism from $H$ to itself.} This ends the proof of Proposition \ref{prop:isot}.
\begin{remark}
This last argument simply means that $T = \tau_{K}\circ \tau\circ S$, each of these three operator\textcolor{black}{s} being an isomorphism from $\Hdiv^{r}$ to $\Hdiv^{r}$.
\end{remark}
\end{proof}

\section{Well-posedness and stability  of the closed-loop system}\label{sec-wellposed}

{\color{black}Our proof of the well-posedness of the closed-loop system is based on semigroup theory. }The closed-loop system has the form, 
\begin{equation}\label{sys:cloloopwell}
\left\{ \begin{array}{ll}
\partial_t u= Au +BKu, & t \in (0,T), \\
u|_{t=0} = u_0(x),
\end{array} \right.
\end{equation}
with,
\begin{equation*}
    B= \sum_{n\in \N^*}  b_n \varphi_n  \in \Hdiv^{-\textcolor{black}{\alpha/2}},
\end{equation*}
\textcolor{black}{Observe in addition that since $B$ satisfies Assumption \ref{asump1}, $B\in  \Hdiv^{-1/2-\varepsilon}$ for $\varepsilon$ sufficiently small. {Similarly we have}}
{\color{black}
\begin{equation*} 
\begin{array}{rcl}
    K:\;  \Hdiv^{\textcolor{black}{1/2}+\varepsilon}&\rightarrow & \mathbb{C} ,\\
    \varphi_n & \textcolor{black}{\longmapsto} & K_n.
\end{array}
\end{equation*}
}
We begin by studying the closed-loop operator $A+BK$ and the spaces on which the operator equality is defined. Then we move on to the proof of well-posedness by the semigroup theory. 

\subsection{Properties of the spaces $D_r(A+BK)$.}

Clearly, $A+ BK$ can be defined on $\Hdiv^{1/2+\varepsilon}$. {\color{black}In particular, }
\begin{equation*}
    A+ BK: \Hdiv^{\textcolor{black}{\alpha/2}}\rightarrow  \Hdiv^{-\textcolor{black}{\alpha/2}}.
\end{equation*} 
However, it does not map the \textcolor{black}{natural space $\Hdiv^{\alpha}$ (which is exactly $D(A)$)} to the ``desired" $\textcolor{black}{H}$ space \textcolor{black}{(recall that $H=\Hdiv^{0}$)}:
\begin{equation*}
     A+ BK: \Hdiv^{\textcolor{black}{\alpha}}\rightarrow  \Hdiv^{-1/2-\varepsilon},
\end{equation*}
{\color{black}as $B \in \Hdiv^{-1/2-\varepsilon}$.} Therefore, {\color{black}for $r\in (1/2-\alpha, \alpha-1/2)$, we shall define the unbounded operator  $A+ BK$ as follows:
\begin{gather}
\label{A+BK-operator}
     A+ BK: D_r(A+BK)\subset \Hdiv^r \rightarrow  \Hdiv^r, \\
     D_r(A+BK):= \{f\in \Hdiv^r: (A+ BK)f\in \Hdiv^r\},
\end{gather}
where the space $D_r(A+BK)$ is endowed with the usual norm 
\begin{equation*}
    \|y\|_{D_r(A+BK)}:= \|y\|_{\Hdiv^r}+ \|(A+ BK) y\|_{\Hdiv^r}, \; \forall y\in D_r(A+BK).
\end{equation*}}
{\color{black}

Let us first point out that every $f\in D_r(A+BK)\subset \Hdiv^{r}$ satisfies $Af\in \Hdiv^{r-\textcolor{black}{\alpha}}$. Since $r<\alpha-1/2$, 
\[BKf\in \Hdiv^{r-\alpha}\subset \Hdiv^{-1/2-\varepsilon}\setminus \Hdiv^{-1/2},\]
which implies that $Kf$ is well-defined 
(and in particular $Kf\in\mathbb{C}$). 
From the definition of $D_r(A+ BK)$, we then get $Af\in \Hdiv^{-1/2-\varepsilon}$. 
Hence, $f= A^{-1} (Af)\in \Hdiv^{\alpha-1/2-\varepsilon}$. As a result, for $r\in \left(1/2-\alpha, \alpha-1/2\right)$, and for $\e<\alpha-r-1/2$, $D_r(A+BK)\subset \Hdiv^{r+\e}$.


In order to further our study, we now use the operator equality \eqref{op-eq} in the appropriate functional setting.
We first show that for $r\in \left(1/2-\alpha, \alpha-1/2\right)$,
\begin{gather}\label{oe.eq.opeq}
    T(A+ BK)f= (A-\lambda I)Tf \in \Hdiv^r, \quad \forall f\in  D_r(A+BK).
\end{gather}
Recall from \eqref{op-eq} that this equality holds in $\Hdiv^{-\textcolor{black}{\alpha/2}+s}$, for $f\in \Hdiv^{\textcolor{black}{\alpha/2}+s}$  for any $s\in (\textcolor{black}{-(\alpha-1)/2, (\alpha-1)/2)}$ but \emph{a priori} we do not have any information in $\Hdiv^r$. Since $D_r(A+BK)\subset \Hdiv^{\alpha-1/2-\e}$, \eqref{op-eq} holds for $f\in D_r(A+BK)$, so that we only need to check the regularity of both sides of the equality. Now, for $f\in D_r(A+BK)$, by property of $T$ and definition of $D_r(A+BK)$, we have $T(A+ BK)f\in \Hdiv^r$, hence $(A-\lambda I)Tf\in \Hdiv^r$, and \eqref{oe.eq.opeq} follows. \\

We can now use the operator equality to characterize the spaces $D_r(A+BK)$.

\begin{lem}\label{lem:defdomaTAB}
Let $r\in \left(1/2-\alpha, \alpha-1/2\right)$. Then, $D_r(A+ BK)= T^{-1} (\Hdiv^{\textcolor{black}{r+\alpha}})$.
\end{lem}

\begin{proof}[Proof of Lemma \ref{lem:defdomaTAB}]
As a direct consequence of the above, for $f\in D_r(A+BK)$, we get $ATf\in \Hdiv^r$, thus $Tf\in \Hdiv^{\textcolor{black}{r+\alpha}}\subset \Hdiv^r$. This implies by Proposition \ref{prop:isot} that
\[D(A+BK) \subset T^{-1} (\Hdiv^{\textcolor{black}{r+\alpha}}).\]
It remains to prove that $T^{-1} (\Hdiv^{\textcolor{black}{r+\alpha}})\subset D(A+BK)$.  Let $\tilde{f}\in T^{-1} (\Hdiv^{\textcolor{black}{r+\alpha}})\subset \Hdiv ^{\alpha-1/2-\e}$ so that $(A-\lambda I)T \tilde f\in \Hdiv^r$. Applying the operator equality \eqref{op-eq} to $f$, and using Proposition \ref{prop:isot}, we get
\begin{equation*}
    T(A+ BK)\tilde f\in \Hdiv^r, \quad i.e., \quad (A+ BK)\tilde f\in \Hdiv^r , \quad i.e., \quad \tilde f\in D_r(A+BK).
\end{equation*}
This concludes the proof of the lemma.
\end{proof}
Since $T$ is an isomorphism on $\Hdiv^r$, and by density of $\Hdiv^{r+\alpha}$ in $\Hdiv^r$, we immediately get:
\begin{cor}\label{cor-domain-density}
For $r\in \left(1/2-\alpha, \alpha-1/2\right)$, the space $D_r(A+BK)$ is dense in $\Hdiv^r$.
\end{cor}

We show the following lemma.
\begin{lem}
\label{lem:well3}
For $r\in \left(1/2-\alpha, \alpha-1/2\right)$, $D_{\textcolor{black}{r}}(A+ BK)$ is a Hilbert space. Moreover, 
\begin{equation*}
    T: D_{\textcolor{black}{r}}(A+ BK)= T^{-1}(\Hdiv^{\textcolor{black}{r+\alpha}})\rightarrow \Hdiv^{\textcolor{black}{r+\alpha}},
\end{equation*}
is an isomorphism, and 
\begin{equation}\label{eq:opequalitylem63}
     T(A+ BK)= (A-\lambda \textcolor{black}{I})T\in \mathcal{L}(D_{\textcolor{black}{r}}(A+ BK); \textcolor{black}{\Hdiv^r}).
\end{equation}
\end{lem}
\begin{proof}[Proof of Lemma \ref{lem:well3}]
We first show that $D_{\textcolor{black}{r}}(A+BK)$ is complete and hence is a Hilbert space, the proof of which is classical and that we recall. Given $(f_n)_{n\in\mathbb{N}^{*}}$ a Cauchy sequence in $D_r(A+BK)$, \textcolor{black}{we have}
\begin{equation*}
    \|(A+ BK) (f_n- f_m)\|_{\Hdiv^r}+ \| f_n- f_m\|_{\Hdiv^r}\xrightarrow[n, m\rightarrow +\infty]{}  0.
\end{equation*}
Since $(f_n)_{n\in \N^*}$ is a sequence of elements of $D_r(A+ BK)$, according to the operator equality \eqref{oe.eq.opeq}, 
\begin{equation*}
   \|(A-\lambda I)(T f_n- T f_m)\|_{\Hdiv^r} =\|T(A+ BK) (f_n- f_m)\|_{\Hdiv^r}\lesssim\|(A+ BK) (f_n- f_m)\|_{\Hdiv^r}\rightarrow 0,
\end{equation*}
thus
\begin{equation*}
   \|(A(T f_n- T f_m)\|_{\Hdiv^r}\rightarrow 0\;\;\text{ which implies }\;\; \|T f_n- T f_m\|_{\Hdiv^{r+\alpha}}\rightarrow 0.
\end{equation*}
Since $\Hdiv^{r+\alpha}$ is complete, there exists some $Tf\in \Hdiv^{r+\alpha}$ such that $Tf_n \xrightarrow[n\to\infty]{\Hdiv^{r+\alpha}}Tf$. As a direct consequence we immediately obtain that, 
\begin{equation}
\label{cv-1}f_{n}\xrightarrow[n\to\infty]{\Hdiv^r}f.
\end{equation}

Moreover, 
\begin{align}\label{cv-2}
    \|(A+BK)f_n-(A+BK)f\|_{\Hdiv^r} &\lesssim\|T(A+ BK)f_n- T(A+ BK)f\|_{\Hdiv^r}  \nonumber \\
    &= \|(A-\lambda I)T (f_n- f)\|_{\Hdiv^r}\rightarrow 0,
\end{align}
as $Tf_{n}\xrightarrow[n\to\infty]{\Hdiv^{r+\alpha}} Tf$. Thus, putting \eqref{cv-1} and \eqref{cv-2} together, we get $f_n \xrightarrow[n\to\infty]{D_r(A+BK)} f$.\\

Next, we turn to the second part of Lemma \ref{lem:well3} and we show that $T: D_r(A+BK)\rightarrow \Hdiv^{r+\alpha}$ is an isomorphism.   For any $y\in D_r(A+ BK)= T^{-1}(\Hdiv^{r+\alpha})$, we know that, since the operator equality holds in $\Hdiv^r$,
\begin{equation*}
\begin{split}
     \|y\|_{D_r(A+ BK)}&= \|(A+ BK)y\|_{\Hdiv^r}+ \|y\|_{\Hdiv^r} \\
     &\simeq  \|T(A+ BK)y\|_{\Hdiv^r}+ \|y\|_{\Hdiv^r}\\
   &= \|(A-\lambda I)Ty\|_{\Hdiv^r}+ \|y\|_{\Hdiv^r}\\
   &\simeq \|Ty\|_{\Hdiv^{r+\alpha}},
 \end{split}
\end{equation*}
which implies that $T: D_r(A+BK)\rightarrow \Hdiv^{r+\alpha}$ is an isomorphism. \\

Finally, we immediately get from the definition of $D_r(A+BK)$ and the fact that it is a Hilbert space that
\begin{equation*}
   T(A+ BK), (A-\lambda \textcolor{black}{I})T\in \mathcal{L}(D(A+ BK); H).  
\end{equation*}
This proves \eqref{eq:opequalitylem63} and thus concludes the proof of Lemma \ref{lem:well3}.
\end{proof}
}

{\color{black}
\subsection{Regularity of the domains and spectral properties of $A+BK$.}

{\color{black}
At this point, our characterization of the $D_r(A+BK)$ is rather indirect. To gain a better understanding of these spaces, we will now explore their relationship with the spaces $\Hdiv^s$, an endeavour which we have already started at the beginning of this subsection.

Our first remark is that, since $T$ is an isomorphism on $\Hdiv^s$ for $s\in (1/2-\alpha, \alpha-1/2) $, we have
\begin{equation}
\label{low-reg-domains}
D_r(A+BK)=T^{-1}(\Hdiv^{r+\alpha})=\Hdiv^{r+\alpha}, \quad \forall r\in \left(\frac12-\alpha, -\frac12\right).\end{equation}
In particular, for $r\in \left(\frac12-\alpha, -\frac12\right)$ the eigenvectors $(\varphi_n)_{n\in \mathbb{N}^\ast}$ form a Riesz basis of $D_r(A+BK)$.
On the other hand, recall that $B\notin \Hdiv^{-\frac12}$. Hence, for $r\in [-1/2, \alpha-1/2)$, and for $f\in \Hdiv^{r+\alpha}$, since $Af \in \Hdiv^r \subset \Hdiv^{-\frac12}$,  $f \notin D_r(A+BK)$, \ie
\begin{equation}
\label{high-reg-domains}
\Hdiv^{r+\alpha} \cap D_r(A+BK) = \{0\}, \quad \forall r\in \left[-\frac12, \alpha-\frac12\right).\end{equation}
In particular, for $r\in \left[-\frac12, \alpha-\frac12\right)$, $\varphi_n \notin D_r(A+BK), \ \forall n \in \mathbb{N}^\ast$.

Let us stress this interesting dichotomy: for low regularity spaces, \eqref{low-reg-domains} tells us that the domain $D_r(A+BK)$ is the expected $\Hdiv^{r+\alpha}$. Then, past the threshold given by $r=-1/2$, for higher regularity spaces, \eqref{high-reg-domains} tells us that $D_r(A+BK)$ is completely unrelated to $\Hdiv^{r+\alpha}$.
This is to be understood in light of the regularity of the control operator $B$. We have implemented the backstepping method in an array of spaces, using a controllability assumption. 
Now, in all these spaces, $B$ is an admissible control operator, but it is bounded \textit{only} in the low regularity spaces given by \eqref{low-reg-domains}. 
In that case, it is indeed usual that the closed-loop operator should have the same domain as $A$. Accordingly, \eqref{low-reg-domains} tells us that $T$ preserves regularity, \textit{i.e.,} $T(\Hdiv^{r+\alpha})= \Hdiv^{r+\alpha}, \ \forall r\in \left(\frac12-\alpha, -\frac12\right)$.

On the other hand, in the higher regularity spaces, $B$ is unbounded.  
Accordingly, the backstepping transformation does not preserve higher regularity, \textit{i.e.,} $T(\Hdiv^{r+\alpha})\cap \Hdiv^{r+\alpha}=\{0\}, \ \forall r\in \left[-\frac12, \alpha-\frac12\right)$ . As a result, the domains are more implicitly defined. The same difficulties regarding the domain of the closed-loop operator were already mentioned in \cite{komornikStab, Vest}. Here, thanks to a comprehensive study of the operator and its domain, we are able to give a more precise definition of the domain, and even a Riesz basis.
}

Indeed, having characterized the domains $D_r(A+BK)$, we can now study the spectral properties of $A+BK$, using again the operator inequality. Consider the formal calculation:
\begin{equation}
    \label{A+BK-formal-eigenvectors}
    \begin{aligned}
(A+BK)T^{-1}\varphi_n&=T^{-1}T(A+BK)T^{-1}\varphi_n\\
&=T^{-1}(A-\lambda I)TT^{-1}\varphi_n\\
&=T^{-1}(A-\lambda I)\varphi_n \\
&=(\lambda_n-\lambda)T^{-1}\varphi_n, \quad \forall n \in \mathbb{N}^\ast.
\end{aligned}
\end{equation}
Now, we know from \eqref{oe.eq.opeq} that this formal derivation holds if one can apply the operator equality to $T^{-1}\varphi_n$, \ie if $T^{-1}\varphi_n \in D_r(A+BK)$. By Lemma \ref{lem:defdomaTAB}, this always holds. Thus, \eqref{A+BK-formal-eigenvectors} leads to the following proposition:
\begin{prop}
For $r\in (1/2-\alpha, \alpha-1/2)$, the unbounded operator $A+BK$ with domain $D_r(A+BK)$ admits a Riesz basis of eigenvectors in $\Hdiv^r$, given by $(T^{-1}\varphi_n)_{n\in \mathbb{N}^\ast}$, with corresponding eigenvalues $(\lambda_n-\lambda)_{n\in \mathbb{N}^\ast}$.
\end{prop}

\subsection{Well-posedness and exponential stability.}
\begin{prop}\label{prop:wellposedcls}
The unbounded operator $A+ BK$ with domain $D_r(A+BK)$ is the infinitesimal generator of a strongly continuous $C^0$-semigroup on $\Hdiv^r$. 
\end{prop}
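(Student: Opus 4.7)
The strategy is to transfer the semigroup generated by the target operator $A-\lambda$ through the isomorphism $T$. Since $\mathscr{L}$ is skew-adjoint on $L^{2}$ with domain $H^{3/2}$, the operator $A-\lambda = \mathscr{L}-\lambda$ generates a $C_{0}$-contraction semigroup $\{e^{(A-\lambda)t}\}_{t\geq 0}$ on $L^{2}$ satisfying $\|e^{(A-\lambda)t}\|_{\mathcal{L}(L^{2})}\leq e^{-\lambda t}$. I then define
\begin{equation*}
S(t) := T^{-1}\, e^{(A-\lambda)t}\, T,\qquad t\geq 0,
\end{equation*}
which makes sense on $L^{2}$ since $T:L^{2}\to L^{2}$ is an isomorphism by Proposition \ref{prop:isot}. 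The semigroup property $S(t)S(s)=S(t+s)$ is immediate, and strong continuity of $S(t)$ on $L^{2}$ follows from that of $e^{(A-\lambda)t}$ together with the boundedness of $T$ and $T^{-1}$. Moreover, in the equivalent norm \eqref{def-norm-Ld}, we directly compute
\begin{equation*}
\|S(t)y\|_{L^{2}_{d}} = \|T S(t)y\|_{L^{2}} = \|e^{(A-\lambda)t} Ty\|_{L^{2}} \leq e^{-\lambda t}\|Ty\|_{L^{2}} = e^{-\lambda t}\|y\|_{L^{2}_{d}},
\end{equation*}
so that $S(t)$ is in fact a contraction semigroup on $(L^{2},\|\cdot\|_{L^{2}_{d}})$ with decay rate $\lambda$.

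It remains to identify the infinitesimal generator $\tilde A$ of $S(t)$ with $(A+BK, D(A+BK))$. By construction, $D(\tilde A)=\{y\in L^{2}: Ty\in D(A-\lambda)\} = \{y\in L^{2}: Ty\in H^{3/2}\}$ and $\tilde A y = T^{-1}(A-\lambda)Ty$ on this domain. The claim is that
\begin{equation*}
D(A+BK) = T^{-1}(H^{3/2}), \qquad \tilde A y = (A+BK)y \text{ for } y\in D(A+BK).
\end{equation*}
Both directions rely on the operator equality $TA+BK=(A-\lambda)T$ in $\mathcal{L}(H^{3/4+s},H^{-3/4+s})$ for $s\in(-1/4,1/4)$ established in Lemma \ref{T:boundedopeq}. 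For the inclusion $D(A+BK)\subset T^{-1}(H^{3/2})$: if $y\in D(A+BK)$, the remark preceding this proposition gives $y\in H^{1-\varepsilon}\subset H^{3/4}$, so the operator equality applies and yields $T(A+BK)y = (A-\lambda)Ty$; since the left-hand side lies in $L^{2}$ (as $(A+BK)y\in L^{2}$ and $T$ is bounded on $L^{2}$) and $\lambda Ty\in L^{2}$, we get $ATy\in L^{2}$, i.e., $Ty\in H^{3/2}$. Conversely, if $Ty\in H^{3/2}$ with $y\in L^{2}$, then since $T$ is an isomorphism on $H^{3/4}$ by Proposition \ref{prop:isot} we have $y = T^{-1}(Ty)\in H^{3/4}$, so $(A+BK)y$ is well-defined in $H^{-3/4}$ and the operator equality gives $T(A+BK)y = (A-\lambda)Ty \in L^{2}$; applying $T^{-1}$ (bounded on $L^{2}$) yields $(A+BK)y\in L^{2}$, that is, $y\in D(A+BK)$, and $\tilde A y = T^{-1}(A-\lambda)Ty = (A+BK)y$.

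The main obstacle is this last identification step: the operator equality holds only in $\mathcal{L}(H^{3/4},H^{-3/4})$, whereas the generator identification requires equalities in $L^{2}$. The key reconciling observations are that $D(A+BK)\subset H^{1-\varepsilon}\subset H^{3/4}$, which lets us invoke the operator equality pointwise on $D(A+BK)$, and that $T^{-1}(H^{3/2})$ automatically sits inside $H^{3/4}$ (via $T^{-1}:H^{3/4}\to H^{3/4}$), which supplies enough regularity in the converse direction. Once both inclusions are established and the two operators shown to coincide, $A+BK$ is the generator of $S(t)$, which is a strongly continuous contraction semigroup on $(L^{2},\|\cdot\|_{L^{2}_{d}})$, proving the proposition.
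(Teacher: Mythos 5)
Your argument is correct, but it reaches the conclusion by a genuinely different route than the paper for the generation part. You transfer the target semigroup by similarity: since $\mathscr{L}$ is skew-adjoint, $A-\lambda$ generates a contraction semigroup on $L^{2}$ with domain $H^{3/2}$, and you define $S(t)=T^{-1}e^{(A-\lambda)t}T$, whose generator is automatically $T^{-1}(A-\lambda)T$ on $T^{-1}(H^{3/2})$; the whole problem then reduces to the identification $D(A+BK)=T^{-1}(H^{3/2})$ and $(A+BK)y=T^{-1}(A-\lambda)Ty$ there. That identification is carried out exactly as in the paper's Lemma \ref{lem:defdomaTAB} (using $D(A+BK)\subset H^{1-\varepsilon}$ for one inclusion and $T^{-1}:H^{3/4}\to H^{3/4}$ for the other), so the domain analysis coincides; the paper, however, then proceeds via Lumer–Phillips, proving density of $D(A+BK)$, completeness of $D(A+BK)$ as a Hilbert space, closedness of $A+BK$, and dissipativity of both $A+BK$ and $(A+BK)^{*}$ in the $\|\cdot\|_{L^{2}_{d}}$ norm. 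Your conjugation argument is shorter, avoids the adjoint and closedness verifications entirely, and yields the contraction/decay estimate in $\|\cdot\|_{L^{2}_{d}}$ for free (essentially absorbing Step 1 of the proof of Corollary \ref{coro:expostab}); what the paper's longer route buys is the extra structural information ($D(A+BK)$ Hilbert, $T:D(A+BK)\to H^{3/2}$ isomorphism, control of $(A+BK)^{*}$) that it reuses verbatim when treating the other spaces $H^{r}$. One small point to make explicit in your write-up: passing from the operator equality $TAy+BKy=(A-\lambda)Ty$ to $T(A+BK)y=(A-\lambda)Ty$ is not purely the first equality of \eqref{op-eq00}; it also uses $TB=B$ (so that $T(BKy)=(Ky)TB=(Ky)B=BKy$), which holds in $H^{-3/4}$ by the construction of $K$ in Lemma \ref{lem:defk}, exactly as the paper invokes it in its Step 1.
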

\begin{proof}[Proof of Proposition \ref{prop:wellposedcls}]
This proof is inspired by \cite{coron:hal-03161523} for the well-posedness of the closed-loop water tank system. Note that we unfortunately cannot adapt the proof from  \cite{Gagnon-Hayat-Xiang-Zhang} concerning well-posedness of the heat equations for {\color{black}the} lack of smoothing effects. \\

For $r\in (1/2-\alpha, \alpha-1/2)$, it is easy to prove, thanks to its spectral properties, that the operator $A-\lambda \textcolor{black}{I}$ with domain $D_r(A-\lambda I)=\Hdiv^{\textcolor{black}{r+\alpha}}$ (which always contains the eigenfunctions of $A-\lambda I$) generates a strongly continuous semigroup on $\Hdiv^r$, which we denote $e^{t(A-\lambda I)}$. 

Now, define the following strongly continuous semigroup on $\Hdiv^r$:
\begin{equation}
    \begin{array}{rrcl}
    S: &\mathbb{R}^+ &\rightarrow& \mathcal{L}(\Hdiv^r)\\
    & t &\mapsto& T^{-1}e^{t(A-\lambda I)}T \in \Hdiv^r.
    \end{array}
    \end{equation}
We prove that the infinitesimal generator of $S(t), \, t\geq 0$ is $A+BK$ with domain $D_r(A+BK)$. Let $f\in D_r(A+BK)$. Then, by Lemma \ref{lem:defdomaTAB}, $Tf\in \Hdiv^{r+\alpha}=D_r(A-\lambda I)$ and clearly, by definition of $e^{t(A-\lambda I)}$,
\[\begin{aligned}
\frac{S(t)f-f}t&=T^{-1}\left(\frac{e^{t(A-\lambda I)}Tf-Tf}t \right)\\
&\xrightarrow[t\to 0^+]{\Hdiv^r} T^{-1}(A-\lambda I)Tf.
\end{aligned}\]
Conversely, let $f\in \Hdiv^r$ be such that there exists $h\in \Hdiv^r$ such that
\[\frac{S(t)f-f}t\xrightarrow[t\to 0^+]{\Hdiv^r} h.\]
Then, as $T$ is an isomorphism on $\Hdiv^r$,
\[\begin{aligned}\frac{e^{t(A-\lambda I)}Tf-Tf}t&=\frac{TS(t)T^{-1} (Tf) - Tf}t\\
&=T\frac{S(t)f-f}t\\
&\xrightarrow[t\to 0^+]{\Hdiv^r} Th.\end{aligned}\]
This implies that $Tf\in D_r(A-\lambda I)=\Hdiv^{r+\alpha}$, \ie $f\in T^{-1}(\Hdiv^{r+\alpha})=D_r(A+BK)$.

Thus, $D_r(A+BK)$ is indeed the domain of the infinitesimal generator of $S(t), \, t\geq 0$.
Moreover, by \eqref{oe.eq.opeq},
\[\begin{aligned}
\lim_{t\to 0^+} \frac{S(t)f-f}t&=T^{-1}(A-\lambda I) Tf \\
&=T^{-1} T (A+BK)f \\
&=(A+BK)f, \quad \forall f \in D_r(A+BK),
\end{aligned}\]
which concludes the proof of the proposition.
\end{proof}
Now that we have established the well-posedness of the closed-loop system, we can turn to its exponential stability, keeping in mind that, remarkably, the same feedback achieves exponential stabilization in an array of functional spaces.
\begin{proof}[Proof of Corollary \ref{coro:expostab}]
Let $r\in (1/2-\alpha, \alpha-1/2)$, and $u_0\in \Hdiv^r$. From the definition of the semigroup $S(t)$ on $\Hdiv^r$, we immediately get
\begin{equation*}
\label{eq:decay-exp}
    \|S(t)u_0\|_{\Hdiv^r}=\|T^{-1}e^{t(A-\lambda I)} Tu_0\|_{\Hdiv^r}\leq e^{-\lambda t}\|T^{-1}\|\|T\| \|u_0\|_{\Hdiv^r}.
\end{equation*}
This ends the proof of Corollary \ref{coro:expostab}.
\end{proof}

} 
{\color{black}


\section{Linearized water waves}
\label{sec:water-waves}

In this 
section,
we illustrate the application of our main results to the rapid stabilization of the linearized water waves. We first recall its derivation following \cite{Alazard-jems, alazardburq,  zbMATH02172228, zbMATH06168816} and proceed to describe the functional setting fitting our abstract framework.

\subsection{Derivation of the linearized water waves equation}
Consider the 2-D capillary-gravity water waves for an  homogeneous, inviscid, incompressible, irrotational fluid over a flat bottom on which an external pressure is applied. The volume of the fluid is described by
\[
\Omega(t)=\{(x,y)\in \T \times \R \, | \, -h \leq y \leq \eta(x,t) \}, 
\]
where $y=-h$ is the bottom of the fluid, $y=\eta(x,t)$ is the deformation from the rest $y=0$ of the free surface and $\T=\R/2\pi \Z $ . The evolution of the velocity field $U$ of the fluid and of the free surface are governed by 2-D free surface Euler equation, 
\begin{equation*}
\begin{cases}
\d_t U + (U. \nabla ) U  = - \nabla p - g e_2, \quad & (x,y) \in \Omega(t),  \\
\textrm{div } U  = 0, \; 
\textrm{rot } U  = 0, \quad & (x,y) \in \Omega(t), \\
U.n  = 0, \quad & (x,y) \in \T \times \{-h\},
\end{cases}   
\end{equation*}
satisfying the  boundary conditions on the free surface  $y= \eta(t, x)$,
\begin{equation*}
\begin{cases}
\d_t \eta = \sqrt{1+|\nabla \eta|^2 } U .n  , \quad & (x,y)\in \T \times \{\eta(t, x)\}, \\
p =p_{atm} + P_{ext} - \sigma \kappa(\eta),  \quad & (x,y)\in \T \times \{\eta(t, x)\},
\end{cases}   
\end{equation*}
where $p$ is the pressure, $g$ the gravitational constant, $n:=\frac{1}{\sqrt{1+|\nabla \eta|^2}}(-\nabla \eta, 1)^t $ the outward normal vector to the surface $\eta$, $e_2=(0,1)^t$ the unit vector, $\sigma>0$ is the surface tension coefficient and 
\[
\kappa(\eta)=\d_x  \left( \dfrac{ \d_x \eta}{\sqrt{1+|\d_x  \eta|^2}} \right) = \dfrac{\d_x^2 \eta }{(1+(\d_x \eta)^2)^{3/2}},
\]
is the mean curvature of the surface. 
The first part is  the Euler equation on $U$ describing incompressible and irrotational fluids with an impermeable bottom respectively. The  second part is the boundary conditions on the free surface:  the kinematic equation on the surface for $\eta$ asserting that particles on the surface remains on the surface along time, and the pressure at the surface, including the surface tensison and the localized external pressure $P_{ext}(t, x,\eta(t, x))$.   
The incompressible and irrotational assumption implies that the velocity field is represented by a velocity potential $\Phi : \R^+  \times \R^2 \rightarrow \R$ such that $U= \nabla_{x, y} \Phi$. The 2-D free surface Euler equation  implies that the velocity potential satisfies 
\begin{equation*}
\begin{cases}
\d_t \Phi + \dfrac{1}{2} | \nabla \Phi |^2 + gy = -(p-p_{atm}), \quad & (x,y) \in \Omega(t),  \\
\Delta \Phi = 0, \quad & (x,y) \in \Omega(t),  \\
\d_n \Phi =0, \quad & (x,y) \in (0,2\pi ) \times \{-h\},  \\
\d_t \eta = \sqrt{1+|\nabla \eta|^2 } \d_n \Phi , \quad & (x,y)\in (0,2\pi) \times \{\eta(t, x)\}.
\end{cases}
\end{equation*}

It was first noted by Zakharov \cite{Zakharov} that   the previous equation on the velocity potential $\Phi$  is an Hamiltonian system, where $\psi:=\Phi|_{y=\eta}$ and $\eta$ are (generalized) canonical variables. Moreover, $\Phi$ is completely determined through the Laplace equation and the knowledge of $\psi:=\Phi|_{y=\eta}$ and $\eta$. This leads to study the Dirichlet-to-Neumann map 
\[
G[\eta,h]: \psi \mapsto \sqrt{1+| \nabla \eta|^2} \d_n \Phi|_{y=\eta}= \d_y \Phi(x,\eta,t)-\d_x \eta(x,t) \d_x \Phi(x,\eta,t).
\]
We refer for instance to \cite{Alazard-jems,  alazardburq, zbMATH02172228, zbMATH06168816} and the references therein for the properties of the Dirichlet-to-Neumann map as well as its application to the well-posedness of the Cauchy problem of the gravity and capillary-gravity water waves.

Using the Dirichlet-to-Neumann map, one may reformulate the capillary-gravity water waves as,
\begin{equation}\label{nonlinearwaterwaves}
\begin{cases}
\d_t \eta - G[\eta,h] \psi = 0, \\
\d_t \psi + g \eta  + \dfrac{1}{2} | \nabla \psi|^2 - \dfrac{(G[\eta,h]\psi + \nabla \eta . \nabla \psi)^2}{2(1+|\nabla \eta|^2)}=\sigma \kappa(\eta)-P_{ext}.
\end{cases}
\end{equation}
The Dirichlet-to-Neumann operator is nonlinear with respect to the surface elevation. We therefore consider the linearization around $(\eta,\psi)=(0,0)$, yielding (fixing $\sigma=1$),
\begin{equation}\label{linearwaterwaves}
\begin{cases}
\d_t \eta - G[0,h] \psi = 0, \\
\d_t \psi + g \eta  - \d^2_x \eta =-P_{ext},
\end{cases}
\end{equation}
where $G[0,h]=|D_x|\textrm{tanh}(h|D_x|)$, defined as a Fourier multiplier on periodic functions. Set $\LL$ the operator
\begin{equation}
\label{op-WW-2}
\mathscr{L}:= -i \left((g- \d_x^2)G[0,h]\right)^{1/2},
\end{equation}
and let $u=\psi+\LL G[0,h]^{-1}\eta$, we end up with,
\[
\d_t u = \LL u + P_{ext}.
\]

To be more precise, we consider the external pressure (the control) to be of the distributed control form $P_{ext}=B_1(x)w_1(t)+ B_2(x)w_2(t)$. Notice that $\mathscr{L}$ has double eigenvalues (see Section \ref{sec:notations}), which means according to \cite{Gagnon-Hayat-Xiang-Zhang} two distributed controls are required to control/stabilize the system instead of one.  Hence, for ease of notations we consider
\begin{equation}
\label{eq:linearizedWW}
    \partial_t u= \mathscr{L}u+ B w(t),
\end{equation}
where $B$ is a rank 2  control operator $B: w\in \C^2 \rightarrow w_1B_1+ w_2 B_2$. \textcolor{black}{Note that since all nonzero eigenvalues are doubled, the system would not be controllable with a rank 1 operator and $2$ is the smallest possible dimension to have controllability.} }
{\color{black}
\subsection{Functional setting}

The spectral decomposition of $\mathscr{L}$ is given by,
\[
\mathscr{L} e^{\pm inx} = -i\left((g-n^2)|n|\tanh( h |n|)\right)^{1/2}e^{\pm inx}, \quad n\in \N^*, 
\]
where, 
\begin{equation}\label{part2:eig}
 \lambda_n := -i\left((g-n^2)|n|\tanh( h |n|)\right)^{1/2}.
\end{equation}
Thus,
\[
\mathscr{L} \cos(nx)=\lambda_n \cos(nx), \quad \mathscr{L} \sin(nx)=\lambda_n \sin(nx),
\]
Since every nonzero eigenvalues has multiplicity two, any given function can be decomposed by odd and even parts under the orthonormal eigenbasis, 
\[
\varphi_n^1:=\dfrac{1}{\sqrt{\pi}}\sin(nx), \quad \varphi_n^2:=\dfrac{1}{\sqrt{\pi}}\cos(nx), \textrm{ for } n\in \N^*,
\]
and 
\[
\varphi_0^2=\dfrac{1}{2\pi}.
\]
Hence, we define 
\begin{equation}\label{eq:defHdiv12}
\Hdiv_1^s:=\overline{\textrm{span}\{\varphi_n^1 \, | \, n\in \N^*\}}^{\|. \|_{\Hdiv_1^s}}, \quad \Hdiv_2^s:=\overline{\textrm{span}\{\varphi_n^2 \, | \, n\in \N^*\}}^{\|. \|_{\Hdiv_2^s}}.
\end{equation}
\begin{remark}
As pointed out in Remark \ref{remark:15}, the decomposition of the spaces $\Hdiv_i^s, i=1,2$ is not unique. One could also include the space $\Hdiv_3^s$, where space $\Hdiv_3^s$ could be generated by any finite combination of $\varphi_n^i, i=1,2$ and $n\in \N^*$, without changing the nature of the result. One could also consider $\varphi_0^2$ in the space $\Hdiv_3^s$, but this would mean that the "mass" would not be preserved. This does not present any mathematical obstruction, but is less physically relevant (see Remark \ref{remark:mass}). 
\end{remark}
\subsection{Application of the main results}
We now show that the water-wave operator $\mathscr{L}$ satisfies Assumption \ref{hyp:alpha} with $\alpha_i=3/2$ for $i=1,2$. First, from the definition of $\lambda_n$ given by \eqref{part2:eig},
\[
cn^3 \leq \left| -i\left((g-n^2)|n|\tanh( h |n|)\right)^{1/2} \right| \leq C n^3, \quad n\in \N^*,
\]
for $C,c>0$ since $\tanh$ is bounded above and below. Now, inspired by the definition of $\lambda_n$ we define, 
\begin{equation*}
    g(x):=  \left((g+ x^2) x \tanh(bx)\right)^{\frac{1}{2}}, \forall x\in (0, +\infty),
\end{equation*}
which is strictly increasing and verifies that for any $x\in [1, +\infty)$, 
\begin{align*}
 g'(x)&= \frac{1}{2}  \left((g+ x^2) x \tanh(bx)\right)^{-\frac{1}{2}} \left((g+ 3x^2) \tanh(bx)+ b(g+ x^2)x (1- \tanh^2 (bx)) \right) \\
 &\geq C x^{-\frac{3}{2}} x^2= C x^{\frac{1}{2}}.
\end{align*}
We also observe that 
\begin{equation*}
    g^2(2x)\geq 2 g^2(x), \; \forall x\in [1, +\infty). 
\end{equation*}
 
 Let given $m, n\in \N^*$ with $m\neq n$. \textcolor{black}{Assume without loss of generality that $m<n$.} If  $n\geq 2m$, then 
 \begin{equation*}
    |\lambda_n- \lambda_m|= g(n)- g(m)\geq g(n)- g(n/2)\geq \frac{2- \sqrt{2}}{2} g(n)\geq C n^{\frac{3}{2}} \geq C (n-m)n^{\frac{1}{2}}.
 \end{equation*}
 If $m< n< 2m$, then  there exists some $y\in [m, n]$ such that 
 \begin{equation*}
     |\lambda_n- \lambda_m|= g(n)- g(m)= (n- m) g'(y)\geq C (n- m) m^{\frac{1}{2}}\geq C(\sqrt{2})^{-1} (n- m)n^{\frac{1}{2}}.
 \end{equation*}
 \textcolor{black}{Hence the water-wave operator does satisfy Assumption \ref{hyp:alpha} with $\alpha_{1}=\alpha_{2} = 3/2$.} Then, it suffices to consider $B$ satisfying either Assumption \ref{asump1} or Assumption \ref{asump2} to deduce the controllability of the linearized water waves equation, thanks to Proposition \ref{prop:moreregularcontfrollabilitylinear}. 
\begin{prop}

Let $T>0$, $s\geq 0$ and let $B_i \in \Hdiv_i^s$ satisfying Assumption \ref{asump2}. Then, for any $(u_0, u_f)\in (\Hdiv_1^s \times \Hdiv_2^s)^2$, there exists a control $w\in L^2((0,T);\textcolor{black}{\mathbb{R}^{2}} )$ such that the unique solution of \eqref{eq:linearizedWW} with initial state $u_0$ satisfies $u(T)= u_f$ in $\Hdiv_1^s \times \Hdiv_2^s$.
\end{prop}
From Assumption \ref{asump1}, Assumption \ref{asump2} as well as our main theorems, we also deduce the rapid stabilization given by Corollary \ref{th1} and Corollary \ref{coro:expostab}.

}

{\color{black}
\begin{remark}
[Conservation of mass\textcolor{black}{-like condition}]\label{remark:mass}
In this paper we have not taken into account \textcolor{black}{the possibility of a conserved quantity, for instance a} ``conservation of mass" condition \textcolor{black}{
which could be relevant for physical equations such as the water waves equation. In this case, denoting $(\varphi_{n}^{2})_{n\in\mathbb{N}^{*}}$ the even eigenfunctions and $(\varphi_{n}^{1})_{n\in\mathbb{N}^{*}}$ the odd eigenfunctions (recall that for the water waves equation the eigenfunctions are sin and cosine fuctions) associated to $\lambda_{n}=|n|^{3/2}$, the conservation of mass condition becomes
}
\begin{equation*}
        \int_{\T} u(x) dx
    = \langle u(t), \varphi_0^2\rangle=0, 
\end{equation*}
\textcolor{black}{In this case the space $\Hdiv^{r}_{2}$ is generated by the even eigenfunctions and}
  by choosing $B$ such that $\langle B_2, \varphi_0^2\rangle= 0$, the backstepping method can \textcolor{black}{still} be applied. Indeed, by following the same steps as in our proof, one can build an isomorphism that maps the even part of the water-waves system, with mass conservation, to the target system
 \[\partial_t v=\mathscr{L} v-\lambda v,\]
 with mass conservation. \textcolor{black}{This system is} of course still exponentially stable. The key here is that {\color{black} the target system under consideration also preserves mass, \textit{i.e.}, its semigroup $e^{t(A-\lambda I)}$ leaves $\operatorname{Span}\{\varphi_n^2, \ n \geq 1 \} $ invariant, allowing us to implement the backstepping method between the operators $\mathscr{L}$ and $\mathscr{L}-\lambda I$, both with domain $\operatorname{Span}\{\varphi_n^2, \ n \geq 1 \} $.} 
 
\end{remark}

\begin{remark}[
Water waves in bounded domains
]
\textcolor{black}{In Corollaries \ref{th1} and \ref{coro:expostab},} we have investigated the linearized water waves system in a periodic domain. In fact we can also study the same system in a bounded domain with Neumann boundary conditions, the controllability of which was obtained by Reid \cite{Reid-linearww}. In this framework, since all eigenvalues are simple, \textcolor{black}{the situation is simpler and} we are able to establish controllability, and rapid stabilization by backstepping, using only one control term.
\end{remark}
}

\section{Conclusion}
We have presented a compactness/duality method to overcome the limitations of the classical Fredholm backstepping method. This allows to prove
the rapid stabilization of the linearized capillary-gravity water waves system \eqref{introclosedloopsystem}.
More precisely, this compactness/duality method allows to construct a Riesz basis for skew-adjoint operators behaving like $i|D_x|^\alpha$ for $\alpha>1$, that is beyond the $\alpha > 3/2$ threshold imposed by the typical quadratically close criterion. We were moreover able to prove that the uniqueness condition $TB=B$ can also be handled without the quadratically close criterion, using fine estimations. The rapid stabilization was proved in the spaces $\Hdiv^r,\; r\in (1/2-\alpha,\alpha-1/2)$. These bounds are sharp in the sense that the operator $A+BK$ cannot generate a strong semigroup for $r=\alpha-1/2$, while $r=1/2-\alpha$ is the limit for the operator equality \eqref{op-eq} to hold. Moreover, the feedback law is shown to be independent of $r$. Finally, we are able to prove the existence of the isomorphism $T$ in regular spaces $\Hdiv^s, s\in \R$ -- as long as the control operator $B$ is regular enough and satisfies the equivalent of the controllability Assumption \ref{asump1} -- a crucial step in proving the local rapid stabilization of the nonlinear system \eqref{nonlinearwaterwaves} using the regularity-consuming nonlinear estimates of \cite{alazardburq,zbMATH02172228}.  \\

\paragraph{{\bf Acknowledgements}}
Amaury Hayat and Shengquan Xiang would like to warmly thank Mathematisches Forschungsinstitut Oberwolfach for their hospitality, their kindness, and the inspiration this place and this working environment provided. The authors would also like to thank the PEPS 2022 program of CNRS. Ludovick Gagnon was partially supported by the French Grant ANR ODISSE (ANR-19-CE48-0004-01). Ludovick Gagnon and Christophe Zhang were partially supported by the French Grant ANR TRECOS (ANR-20-CE40-0009). 
Amaury Hayat was financially supported by Ecole des Ponts Paristech.  Shengquan Xiang was financially supported by School of Mathematical Sciences at Peking University and  the Chair of Partial Differential Equations at EPFL. \\

\appendix
\section{Riesz basis in Hilbert spaces}
\label{sec:Rieszdef}
We recall here 
some definitions about vector families in Hilbert spaces (see for instance  \cite{BeauchardLaurent, christensen2003introduction,  CGM, Gagnon-Hayat-Xiang-Zhang}).
\begin{defn}[Vector family]\label{def-riesz-bas}
Let $X$ be a Hilbert space. A family of vectors $\{\xi_n\}_{n\in \textcolor{black}{\mathcal{I}}}$, \textcolor{black}{where $\mathcal{I}=\Z$, $\N$, or $\N^*$} is said to be
\begin{itemize}
    \item[(1)]  \textbf{Dense} in X, if  $ \overline{span\{\xi_i; i\in \textcolor{black}{\mathcal{I}}\}}= X$. 
     \item[(2)]  \textbf{$\omega$-independent} in X, if  
     \begin{equation*}
       \sum_{k\in \textcolor{black}{\mathcal{I}}} c_k \xi_k=0 \textrm{ in $X$ with } \{c_n\}_{n\in\textcolor{black}{\mathcal{I}}}\in\ell^2(\mathcal{I}) \Longrightarrow   c_n=0,\, \forall n\in \textcolor{black}{\mathcal{I}}.
            \end{equation*}
     \item[(3)]  \textbf{Quadratically close} to a family of vector $\{e_n\}_{n\in \textcolor{black}{\mathcal{I}}}$, if
     \begin{equation*}
         \sum_{k\in \textcolor{black}{\mathcal{I}}} \|\xi_k- e_k\|_X^2<+\infty.
     \end{equation*}
      \item[(4)]  \textbf{Riesz basis} of $X$, if it is the image of an isomorphism (on $X$) of some orthonormal basis.
\end{itemize}
\end{defn}
An equivalent definition of Riesz basis can also be stated as follows
\begin{defn}[Riesz basis]
A family of vectors $\{\xi_n\}_{n\in \textcolor{black}{\mathcal{I}}}$, \textcolor{black}{where $\mathcal{I}=\Z$, $\N$, or $\N^*$} of $X$ is a called a Riesz basis of X, if it is  dense in $X$ and if there exist $C_1, C_2>0$ such that for any $\{a_n\}_{n\in \textcolor{black}{\mathcal{I}}}\in\ell^2(\mathcal{I})$ we have 
       \begin{equation}\label{ineq-riesz}
           C_1\sum_{k\in \textcolor{black}{\mathcal{I}}} |a_k|^2\leq  \|\sum_{k\in \textcolor{black}{\mathcal{I}}} a_k \xi_k\|_{X}^2\leq  C_2\sum_{k\in \textcolor{black}{\mathcal{I}}} |a_k|^2.
       \end{equation}
\end{defn}

 The following lemma  has been heavily used in the literature as a criterion for  Riesz basis. 
\begin{lem}\label{lem-cri-riesz}
Let $\{\xi_n\}_{n\in
\mathcal{I}}$ be quadratically close to an orthonormal basis $\{e_n\}_{n\in\mathcal{I}}$. Suppose that  $\{\xi_n\}_{n\in\mathcal{I}}$ is either dense in $X$ or $\omega$-independent in $X$, then $\{\xi_n\}_{n\in\mathcal{I}}$ is a Riesz basis of $X$.
\end{lem}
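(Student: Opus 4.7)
The plan is to construct the natural linear map $T : X \to X$ sending the orthonormal basis $\{e_n\}$ to the family $\{\xi_n\}$, and then to show that $T$ is a Hilbert-space isomorphism. Once this is in hand, $\{\xi_n\} = \{T e_n\}$ is the image of an orthonormal basis under an isomorphism, so it is a Riesz basis by Definition \ref{def-riesz-bas}(4).

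First I would set $T e_n := \xi_n$ and decompose $T = \mathrm{Id} + K$, where $K$ is defined on the orthonormal basis by $K e_n = \xi_n - e_n$. The quadratic-closeness hypothesis $\sum_n \|\xi_n - e_n\|_X^2 < +\infty$ is exactly the finiteness of the Hilbert-Schmidt norm of $K$ in the basis $\{e_n\}$, so $K$ extends uniquely to a Hilbert-Schmidt, hence compact, operator on $X$. Thus $T = \mathrm{Id} + K$ extends to a bounded operator on all of $X$, and by the Fredholm alternative $T$ is a Fredholm operator of index $0$. In particular $\dim \ker(T) = \dim \mathrm{coker}(T) < +\infty$, and $T$ is an isomorphism as soon as it is either injective or surjective.

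The main step is then to use either of the two alternative hypotheses to rule out the finite-dimensional obstruction. If $\{\xi_n\}$ is $\omega$-independent, then for any $x = \sum_n c_n e_n \in \ker(T)$ with $(c_n)_n \in \ell^2$, we have $\sum_n c_n \xi_n = T x = 0$, which by $\omega$-independence forces $c_n = 0$ for every $n$; hence $T$ is injective and therefore an isomorphism. If instead $\{\xi_n\}$ is dense in $X$, then $\mathrm{Range}(T) \supset \mathrm{span}\{\xi_n\}$ is dense, and since the range of a Fredholm operator is closed it must equal $X$, so $T$ is surjective, hence again an isomorphism by the index-$0$ property.

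I do not expect significant obstacles beyond being careful at the extension step: one must check that $T$ is genuinely bounded on all of $X$ and not just densely defined, which is ensured by the Hilbert-Schmidt identification above (or, more elementarily, by a Cauchy--Schwarz estimate $\|\sum c_n(\xi_n - e_n)\|_X \le (\sum|c_n|^2)^{1/2}(\sum \|\xi_n - e_n\|_X^2)^{1/2}$). After that, the argument is purely an application of the Fredholm alternative, which is exactly the philosophy the authors wish to promote in this paper before presenting their sharper compactness/duality replacement for the quadratic-closeness hypothesis.
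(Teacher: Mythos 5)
Your proof is correct: the paper itself states this classical (Bari-type) lemma without proof, citing the literature, and your argument — extend $K e_n=\xi_n-e_n$ to a Hilbert--Schmidt (hence compact) operator via the quadratic-closeness bound, conclude $T=\mathrm{Id}+K$ is Fredholm of index $0$, then use $\omega$-independence to kill $\ker T$ or density plus closed range to get surjectivity, and invoke Definition \ref{def-riesz-bas}(4) — is exactly the standard proof found in those references, and it mirrors the Fredholm-alternative mechanism the paper uses for $S$ in Propositions \ref{S:fredholm} and \ref{qn:Riesz}. The only point worth stating explicitly is that for $x=\sum_n c_n e_n\in X$ the boundedness of $T$ gives $Tx=\sum_n c_n\xi_n$ with convergence in $X$, so that $\ker T=\{0\}$ is literally the $\omega$-independence condition of Definition \ref{def-riesz-bas}(2); with that remark included, the argument is complete.
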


Finally in this paper we also make use of the following Lemma

\begin{lem}\label{lem-iso-riesz-equi}
Let $X$, $Y$ be Hilbert spaces. Let $T:X\rightarrow Y$ be an isomorphism. Suppose that $\{\xi_n\}_{n\in \textcolor{black}{\mathcal{I}}}$ is a Riesz basis of $X$, then with $\zeta_n:= T\xi_n$, \textcolor{black}{the family} $\{\zeta_n\}_{n\in \textcolor{black}{\mathcal{I}}}$ is a Riesz basis of $Y$.
\end{lem}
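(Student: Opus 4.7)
The plan is to work directly from the two-sided frame inequality characterization of a Riesz basis (the equivalent definition stated just after Definition \ref{def-riesz-bas}), since it transports cleanly under an isomorphism. The heuristic is that an isomorphism $T$ between Hilbert spaces preserves both the density of a family and its frame-type bounds, up to multiplication by the operator norms of $T$ and $T^{-1}$; I would simply make this precise.

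First I would record what the hypotheses give. Since $\{\xi_n\}_{n\in\mathcal{I}}$ is a Riesz basis of $X$, there exist constants $C_1,C_2>0$ such that, for every $(a_n)_{n\in\mathcal{I}}\in\ell^2(\mathcal{I})$,
\begin{equation*}
C_1\sum_{n\in\mathcal{I}}|a_n|^2 \;\leq\; \Bigl\|\sum_{n\in\mathcal{I}} a_n\xi_n\Bigr\|_X^2 \;\leq\; C_2\sum_{n\in\mathcal{I}}|a_n|^2.
\end{equation*}
Since $T:X\to Y$ is an isomorphism of Hilbert spaces, there exist $D_1,D_2>0$ with $D_1\|x\|_X\leq \|Tx\|_Y\leq D_2\|x\|_X$ for all $x\in X$, namely $D_1=\|T^{-1}\|^{-1}$ and $D_2=\|T\|$.

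Next I would combine these. For any $(a_n)\in\ell^2(\mathcal{I})$, continuity of $T$ allows me to commute $T$ with the (convergent) series, so $\sum a_n\zeta_n = T\bigl(\sum a_n\xi_n\bigr)$; applying the frame bounds for $T$ to $x=\sum a_n\xi_n$ and then the Riesz bounds for $\{\xi_n\}$ yields
\begin{equation*}
D_1^2\,C_1\sum_{n\in\mathcal{I}}|a_n|^2 \;\leq\; \Bigl\|\sum_{n\in\mathcal{I}} a_n\zeta_n\Bigr\|_Y^2 \;\leq\; D_2^2\,C_2\sum_{n\in\mathcal{I}}|a_n|^2,
\end{equation*}
which is exactly the frame inequality \eqref{ineq-riesz} for the family $\{\zeta_n\}$ in $Y$. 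It remains to check density of $\{\zeta_n\}$ in $Y$: given $y\in Y$, surjectivity of $T$ produces $x\in X$ with $Tx=y$; density of $\{\xi_n\}$ in $X$ yields finite linear combinations $x_N\to x$ in $X$, and continuity of $T$ then gives $Tx_N\to y$ in $Y$, with each $Tx_N$ a finite linear combination of the $\zeta_n$. By the equivalent definition, $\{\zeta_n\}_{n\in\mathcal{I}}$ is a Riesz basis of $Y$.

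There is no serious obstacle here; the only point one must not gloss over is the commutation $T\sum a_n\xi_n=\sum a_n T\xi_n$, which is justified by boundedness of $T$ and convergence of the series defining $\sum a_n\xi_n$ in $X$. (Alternatively, one could invoke the other equivalent definition in Definition \ref{def-riesz-bas}(4): write $\xi_n=Ue_n$ for some isomorphism $U$ of $X$ and some orthonormal basis $\{e_n\}$ of $X$, pick any orthonormal basis $\{f_n\}$ of $Y$, form the unitary $V:X\to Y$ sending $e_n\mapsto f_n$, and observe that $\zeta_n=(TUV^{-1})f_n$ exhibits $\{\zeta_n\}$ as the image of an orthonormal basis of $Y$ under an isomorphism of $Y$. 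I would mention this only as a remark, since the frame-inequality route is shorter and already provides the explicit constants.)
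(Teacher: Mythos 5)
Your proof is correct. Note that the paper does not actually prove Lemma \ref{lem-iso-riesz-equi}; it is recalled as a standard fact with references, so there is no in-paper argument to compare against. Your frame-inequality route (transporting the two-sided bound \eqref{ineq-riesz} and the density through $T$, with the only point of care being the commutation $T\sum_n a_n\xi_n=\sum_n a_n\zeta_n$, which you justify by boundedness of $T$ and convergence of the series) is complete, and the alternative you mention in passing is even more direct given the paper's Definition \ref{def-riesz-bas}(4): writing $\xi_n=Ue_n$ with $U$ an isomorphism of $X$ and $\{e_n\}$ orthonormal, and $V:X\to Y$ the unitary sending $e_n$ to an orthonormal basis $\{f_n\}$ of $Y$, one has $\zeta_n=(TUV^{-1})f_n$ with $TUV^{-1}$ an isomorphism of $Y$, which is exactly the definition. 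Either argument suffices; yours has the small added value of exhibiting the explicit Riesz constants $D_1^2C_1$ and $D_2^2C_2$.
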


{\color{black}
\section{Controllability properties in the abstract setting}\label{sec:controllability_proof}
In this section, by the decomposition $\Hdiv=\Hdiv_1 \oplus ... \oplus \Hdiv_m$, we consider the restriction of $e^{At}$ on each $\Hdiv_i, i\in \{1,\ldots,m\}$. Hence, for sake of conciseness, we prove the results for an individual $\Hdiv_i$ that we denote $\Hdiv$. We also denote the corresponding control operator $B=(B_1,\ldots,B_m)$ as $B \in \Hdiv^{-\alpha/2}$. 

Let us first recall Haraux's Ingham inequality,
\begin{thm}\label{thm:Haraux}
{\rm (}\cite[Théorème 2]{zbMATH04123624}{\rm)} Let $J\subset \R$ be a bounded interval and $\gamma, \omega >0$. Assume there exists $\{\mu_n\}_{n\in \mathbb{N}^*}$ a real sequence such that,
\begin{enumerate}
    \item $|\mu_{n+1}-\mu_n| \geq \omega, \forall n\in \mathbb{N}^*$;
    \item there exists $N\in \N^*$ such that $|\mu_{n+1}-\mu_n| \geq \gamma$, for all $|n|\geq N$;
    \item $|J|>2\pi / \gamma$.
\end{enumerate}
Then, there exist $c,C>0$ such that for any sequence $\{a_n\}_{n\in \mathbb{N}^*} \in \ell^2(\mathbb{N}^*;\C)$,
\[
c \sum_{n\in \mathbb{N}^*} |a_n|^2 \leq \int_J \left| \sum_{n\in \mathbb{N}^*} a_n e^{i \mu_n t} \right|^2 dt \leq C \sum_{n\in \mathbb{N}^*} |a_n|^2. 
\]
\end{thm}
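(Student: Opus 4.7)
The plan is to establish Theorem \ref{thm:Haraux} as a finite-rank perturbation of the classical Ingham inequality, which corresponds exactly to the case where one can take $N=0$ in the hypotheses. I split the family $\{\mu_n\}_{n\in\Z}$ into the finitely many ``bad'' frequencies $\{\mu_n\}_{|n|<N}$ (where only the weak gap $\omega$ is available) and the ``good'' frequencies $\{\mu_n\}_{|n|\geq N}$ (where the strong gap $\gamma$ holds), and treat the two parts separately before combining them.

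For the upper bound, I would expand $|f(t)|^2=\sum_{n,m}a_n\overline{a_m}e^{i(\mu_n-\mu_m)t}$ and integrate over $J$. The diagonal terms give exactly $|J|\sum_n|a_n|^2$, while the off-diagonal contributions are bounded in absolute value by $2|J|\,|a_n a_m|/(|J|\,|\mu_n-\mu_m|)$ (up to constants). Using the global gap hypothesis $|\mu_{n+1}-\mu_n|\geq\omega$ to compare $|\mu_n-\mu_m|$ with $\omega|n-m|$ and applying Schur's test to the kernel $1/(1+|n-m|)$ yields an estimate by $C\sum_n|a_n|^2$. This is the routine half and does not require the length condition on $J$.

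For the lower bound, I would first establish the classical Ingham inequality on the tail: since $|\mu_{n+1}-\mu_n|\geq\gamma$ for $|n|\geq N$ and $|J|>2\pi/\gamma$, there exists $c_1>0$ such that
\[
\int_J \left|\sum_{|n|\geq N}b_n e^{i\mu_n t}\right|^2 dt \;\geq\; c_1\sum_{|n|\geq N}|b_n|^2
\]
for any $\ell^2$ sequence $(b_n)$. This is Ingham's classical argument: one integrates $|f_{\mathrm{tail}}|^2$ against a nonnegative weight $h$ compactly supported in $J$ whose Fourier transform $\hat h$ satisfies $\hat h(0)>0$ and $\hat h(\xi)=0$ for $|\xi|\geq\gamma$, so that only the diagonal $n=m$ contribution survives; such an $h$ exists by Paley--Wiener precisely because $|J|>2\pi/\gamma$.

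The main obstacle, and the step where Haraux's refinement differs substantially from Ingham's original, is handling the finite ``bad'' block $|n|<N$ without a usable gap. I would argue by contradiction and compactness: suppose no uniform lower bound holds, i.e.\ there exist sequences $(a_n^{(k)})\in\ell^2$ with $\sum_n|a_n^{(k)}|^2=1$ but $\int_J|f_k|^2\,dt\to 0$. After extracting a weakly convergent subsequence $a^{(k)}\rightharpoonup a^\infty$ in $\ell^2$, the tail inequality above forces the high-frequency coefficients $(a_n^{(k)})_{|n|\geq N}$ to converge strongly to zero in $\ell^2$, so all the $\ell^2$-mass concentrates on the finite index set $\{|n|<N\}$ and $a^\infty$ is nonzero there. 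On this finite-dimensional space, the map $(a_n)_{|n|<N}\mapsto \sum_{|n|<N}a_n e^{i\mu_n\cdot}\in L^2(J)$ is injective (the $\mu_n$ are distinct real numbers, so the exponentials are linearly independent on any interval of positive length), which contradicts $f_k\to 0$ in $L^2(J)$. This yields the sought uniform constant $c>0$ and completes the proof.
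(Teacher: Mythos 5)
First, note that the paper does not prove this statement at all: it is quoted verbatim from Haraux \cite{zbMATH04123624}, whose proof removes the finitely many exceptional frequencies one at a time by an averaging/difference operator of the form $f\mapsto f(\cdot)-\frac{1}{\delta}\int_0^\delta e^{-i\mu_{n_0}s}f(\cdot+s)\,ds$, which annihilates the mode $\mu_{n_0}$, leaves the remaining modes multiplied by factors bounded away from zero, and costs only an arbitrarily small shrinking of $J$ — admissible because $|J|>2\pi/\gamma$ is strict. Your proposal takes a different, compactness--uniqueness route, but it has a genuine gap at its central step. From $\int_J|f_k|^2\to 0$ you cannot conclude that the tail coefficients $(a_n^{(k)})_{|n|\geq N}$ tend to $0$ in $\ell^2$: the tail Ingham inequality controls $\int_J|\sum_{|n|\geq N}a_n^{(k)}e^{i\mu_n t}|^2$, and this quantity is not known to vanish, since $f_k$ also contains the finite ``bad'' block. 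What the compactness argument actually yields (after extracting convergence of the finitely many bad coefficients and applying the tail inequality to differences) is strong $\ell^2$-convergence of the full coefficient sequence to some $a^\infty$ with $\|a^\infty\|_{\ell^2}=1$ and $\sum_{n\in\Z}a_n^\infty e^{i\mu_n t}=0$ in $L^2(J)$. To reach a contradiction you then need $\omega$-independence of the \emph{whole} family $\{e^{i\mu_n\cdot}\}_{n\in\Z}$ in $L^2(J)$ — equivalently, that the closed span of the ``good'' exponentials meets the finite-dimensional span of the ``bad'' ones only in $\{0\}$. Linear independence of the finitely many bad exponentials, which is all you invoke, does not give this; the mixed uniqueness statement is essentially the content of Haraux's refinement and must be proved (e.g.\ by his averaging trick), so as written the lower bound is not established.

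Two smaller points. For the classical Ingham inequality on the tail you invoke a weight $h$ compactly supported in $J$ with $\hat h$ vanishing for $|\xi|\geq\gamma$; no nonzero such $h$ exists (Paley--Wiener forbids simultaneous compact support of $h$ and $\hat h$). Ingham's actual weight is, say, $k(t)=\cos(\pi t/|J|)$ supported in $J$, whose transform merely decays like $|\xi|^{-2}$, the condition $|J|>2\pi/\gamma$ ensuring the off-diagonal sum is strictly dominated by the diagonal. For the upper bound, Schur's test applied to the kernel $1/(1+|n-m|)$ fails — its row sums diverge, and indeed the bilinear form $\sum_{n\neq m}|a_n||a_m|/(1+|n-m|)$ is unbounded on $\ell^2$; the standard fix is again a weight-function argument producing a kernel with quadratic decay in $|n-m|$ (or a block-diagonal Cauchy--Schwarz), after which the conclusion, which is true under the gap $\omega$ alone, follows.
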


We then deduce,
\begin{lem}\label{lem:harauxbasis}
Let $T>0$ and assume that $A$ is skew-adjoint and that Assumption \ref{hyp:alpha} holds. Then
{\color{black}
\begin{itemize}
    \item[(i)] the family $\{e^{\lambda_n t}|_{t\in (0, T)}\}_{n\in \N^*}$ is a Riesz basis of 
$$\Theta:= \overline{{\rm Span}\{e^{\lambda_n t}|_{t\in (0, T)}: n\in \mathbb{N}^*}\}^{L^2((0,T);\C)}.$$ 
\item[(ii)] There exists a bi-orthogonal sequence $(g_n(t)|_{t\in (0, T)})_n$ satisfying 
\begin{gather*}
    g_n\in \Theta, \forall n\in \mathbb{N}^*,\\
    \langle e^{\lambda_n \cdot}, g_m\rangle_{L^2(0, T)}= \delta_{n, m}, \forall n, m\in \mathbb{N}^*,
\end{gather*}
  and $(g_n(t)|_{t\in (0, T)})_n$ is also a Riesz basis of $ \Theta$.
\item[(iii)] There exist constants $c, C>0$ such that, for every $f\in L^2(0, T)$, there is 
\begin{equation*}
   c\left( \sum_{n\in \mathbb{N}^*} |\langle f(\cdot), e^{\lambda_n \cdot} \rangle|^2\right)^{\frac{1}{2}}\leq  \left\|\sum_{n\in \mathbb{N}^*} \langle f(\cdot), e^{\lambda_n \cdot} \rangle_{L^2(0, T)} g_n\right\|_{L^2(0, T)}\leq  C \|f\|_{L^2(0, T)}.
\end{equation*}
\end{itemize}

}
\end{lem}
\begin{proof}
{\color{black}We mimic the proof by Beauchard--Laurent in  \cite[Proposition 19]{BeauchardLaurent}.}

(i)
Since $A$ is skew-adjoint, the eigenvalues $\lambda_n$ are purely imaginary. Moreover, from Assumption \ref{hyp:alpha}, we deduce that there exists $\omega>0$ such that $|\lambda_{n+1}-\lambda_{n}|\geq \omega >0, \forall n\in \N^*$ and that there exists, for any $\gamma>0$, $N\in \N^*$ such that $|\lambda_{n+1}-\lambda_{n}|\geq \gamma >0, \forall n\geq N$. Hence, Haraux's Ingham inequality holds for $\{e^{\lambda_n t}|_{t\in (0, T)}\}_{n\in \N^*}$ and, from \eqref{ineq-riesz}, $\{e^{\lambda_n t}|_{t\in (0, T)}\}_{n\in \N^*}$ is a Riesz basis of $\Theta= \overline{{\rm Span}\{e^{\lambda_n t}|_{t\in (0, T)}: n\in \mathbb{N}^*}\}^{L^2((0,T);\C);}$.      

{\color{black}
(ii) From the fact that $\{e^{\lambda_n t}|_{t\in (0, T)}\}_{n\in \N^*}$ is a Riesz basis of $\Theta$, we also deduce the existence of a bi-orthogonal sequence to $\{e^{-\lambda_n s}|_{s\in (0, T)}\}_{n\in \Z^*}$, that we denote $\{g_m\}_{m\in \N^*}$, also a Riesz basis of $L^2((0,T);\C)$. 

(iii) The first inequality comes from the fact that $(g_n)_n$ is a Riesz basis of $\Theta$, it remains to prove the second one. 
Since $\Theta$ is a closed subspace of $L^2(0, T)$, we decompose $L^2(0, T)$ by $\Theta\oplus \Theta^{\perp}$ with the orthogonal projection $P: L^2(0, T)\rightarrow \Theta$. Thus, for every $f\in L^2(0, T)$, there is 
\begin{equation*}
    Pf=  \sum_{n\in \mathbb{N}^*} \langle f(\cdot), e^{\lambda_n \cdot} \rangle_{L^2(0, T)} g_n.
\end{equation*}
Hence we obtain the second inequality from the fact that $\|P f\|_{L^2(0, T)}\leq \|f\|_{L^2(0, T)}$.}
\end{proof}

We now prove the admissibility of the control operator stated in Lemma \ref{lem:admissible}.
\begin{proof}
We prove Lemma \ref{lem:admissible} for a single space $\Hdiv$, as the proof follows readily in the general case. We apply directly Lemma \ref{lem:harauxbasis} with the hypothesis of Lemma \ref{lem:admissible} to deduce that {\color{black}both  $\{e^{\lambda_k t}\}_{k\in \N^*}$ and its bi-orthogonal sequence $\{g_n(t)|_{t\in (0, T)}\}_{n\in \mathbb{N}^*}$ are  Riesz bases of $\overline{{\rm Span}\{e^{\lambda_n t}|_{t\in (0, T)}: n\in \mathbb{N}^*}\}^{L^2((0,T);\C)}$ for any $T>0$. We can therefore write, for any $T>0$, 
\begin{gather*}
L^2((0,T);\C)=\Theta \oplus \Theta^{\perp} \textrm{ and the corresponding orthogonal projections }\\
P: L^2(0, T)\rightarrow \Theta, \\
P^{\perp}: L^2(0, T)\rightarrow \Theta^{\perp}.
\end{gather*}
where 
$$ \Theta:=\overline{{\rm Span}\{e^{\lambda_n (T- t)}|_{t\in (0, T)}: n\in \mathbb{N}^*}\}^{L^2((0,T);\C)}= \overline{{\rm Span}\{g_n(t)|_{t\in (0, T)}: n\in \mathbb{N}^*}\}^{L^2((0,T);\C)}.$$
Now, for 
any $T>0$, let $w \in L^2((0,T);\C)$ such that $w=P w+ P^{\perp}w$. } 
We have
\begin{equation*}
    (P w) (t)= \sum_{k\in \N^*} w_k g_k(t) \textrm{ with } w_k= \langle w(\cdot), e^{\lambda_k \cdot} \rangle_{L^2(0, t)}, \quad \forall t\in (0, T).
\end{equation*}
Then, on the one hand, from Assumption \ref{asump1}, we deduce the admissibility in $\Hdiv^s$ for $s\leq 0$. Indeed, it suffices to show the case $s=0$. We have,
\begin{align*}
\left\|\int_0^T e^{A(T-t)}Bw(t) dt \right\|_{H}^2 &=\left\|\int_0^T e^{A(T-t)}B(w_1(t)+w_2(t)) dt \right\|_{H}^2 \\
&=\left\|\int_0^T \sum_{k\in \N*}e^{\lambda_k(T-t)}b_k \varphi_k (P w(t)+P^{\perp}w(t)) dt \right\|_{H}^2 \\
&= \sum_{k\in \N^*} |b_k|^2 \left|\int_0^T e^{\lambda_k(T-t)} (P w(t)+P^{\perp}w(t)) dt \right|^2 \\
&= \sum_{k\in \N^*} |b_k|^2 \left|\int_0^T e^{\lambda_k(T-t)} P w(t) dt \right|^2 \\
&= \sum_{k\in \N^*} |b_k|^2 |w_k|^2 \\
&\leq C \sum_{k\in \N^*}  |w_k|^2 \\
&= C\|P w(t)\|_{L^2(0,T)}^2 \\
&\leq C\|w(t)\|_{L^2(0,T)}^2.
\end{align*}
{\color{black}
Therefore, for every $T>0$ there exists a constant $C_T>0$ such that, for every $w\in L^2(0, T)$, and for every initial state $u_0\in H$,  there exists a solution $u(t)$ such that 
\begin{equation*}
    \|u(T)\|_{H}\leq \|u_0\|_{H}+ C_T\|w\|_{L^2(0, T)}. 
\end{equation*}
This further implies that, thanks to the fact that the semigroup $e^{ tA}$ preserves the $H$-norm, the solution belongs to $C([0, T]; H)$ and satisfies 
\begin{equation*}
    \|u(t)\|_{H}\leq \|u_0\|_{H}+ C_T\|w\|_{L^2(0, t)}, \forall t\in [0, T].
\end{equation*} 
}

On the other hand, assume the admissibility in $H$. Then, for any $T>0$,
\[
\left\|\int_0^T e^{A(T-t)}Bw(t) dt \right\|_{H} \leq C \|w\|_{L^2(0, T)}, \quad \forall w\in L^2((0,T);\C).
\]
Taking successively $w(t)=g_m(t), m\in \N^*$, we have,
\begin{align*}
\left\|\int_0^T e^{A(T-t)}Bw(t) dt \right\|_{H}^2 &=\left\|\int_0^T \sum_{k\in \N^*}e  ^{\lambda_k(T-t)}b_k \varphi_k w(t) dt \right\|_{H}^2 \\
&=\left\|\sum_{k\in \N^*}b_k\varphi_k \int_0^T e  ^{\lambda_k(T-t)} g_m(t) dt \right\|_{H}^2 \\
&= \sum_{k\in \N^*}  |b_k|^2 \left|\int_0^T e^{\lambda_k(T-t)}  g_m(t) dt \right|^2 \\
&= |b_m|^2 \leq C \|g_m\|_{L^2(0, T)}^2\leq C.
\end{align*}
Therefore, the sequence $b_m$ is uniformly bounded from above. 
\end{proof}

We now turn to the proof of Proposition \ref{prop:contfrollabilitylinear}.
\begin{proof}
We study the controllability of the abstract system 
\begin{equation}\label{appendix:cont}
\begin{cases}
\d_t u(t) = Au(t) + Bw(t), t\in (0,T)\\
u(0)=u_0,
\end{cases}
\end{equation}
with $u_0\in \Hdiv$ and $w\in L^2((0,T);\textcolor{black}{\mathbb{C}} )$.

From the time-reversibility of \eqref{appendix:cont} and the linearity of the equation, it is sufficient to prove that it is always possible to drive the solution from any initial data to $u(T)=0$, the proof of which relies on the moment method. Indeed, let $u_{0}\in \Hdiv$ and use the Duhamel formula, thanks to \eqref{admissible}, to write the solution of \eqref{appendix:cont} as,
\[
u(t)=e^{At }u_0 + \int_0^t e^{A(t-s)} Bv(s) ds. 
\]
Denote the decomposition of $u_0$ and $B$ in the eigenbasis,
\begin{equation*}
    u_0= \sum_{n\in \N^*} u^n_0 \varphi_n \;  \textrm{  and  } B= \sum_{n\in \N^*} b_n \varphi_n.
\end{equation*}
Then, the controllability is equivalent to,
\begin{align*} 
0 & =\sum_{n\in \N^*} e^{\lambda_n T} u_0^n \varphi_n + \int_0^T e^{A(T-s)} \sum_{n\in \N^*} b_n \varphi_n w(s) ds \\
& =\sum_{n\in \N^*} e^{\lambda_n T} u_0^n \varphi_n + \int_0^T \sum_{n\in \N^*} e^{\lambda_n (T-s)} b_n \varphi_n w(s) ds.
\end{align*}
Using Assumption \eqref{condB}, and more precisely, $0<c\leq |b_n|$ to ensure $b_{n}\neq 0$, this is equivalent to, 
\begin{equation}\label{eq:coeffmoment}
-\dfrac{u_0^n}{b_n}= \int_0^T e^{-\lambda_n s}  w(s) ds, \quad \forall n \in \N^\ast.
\end{equation}
Thanks to this controllability assumption, we have
\[
\dfrac{u_0^n}{b_n} \in \ell^2(\N^*;\C).
\]
Therefore, if we are able to prove that there exists $w\in L^2((0,T);\C)$ such that \eqref{eq:coeffmoment} is satisfied, then we deduce the null-controllability. Indeed, Lemma \ref{lem:harauxbasis} implies that $\{e^{-\lambda_n s}\}_{n\in \N^*}$ and its bi-orthogonal family $(g_n(s))_{n \in \N^\ast}$ are Riesz basis of $\overline{{\rm Span}\{e^{\lambda_n t}|_{t\in (0, T)}: n\in \mathbb{N}^*}\}^{L^2((0,T);\C)}$. Therefore, the control can be chosen in such fashion as, 
\begin{equation*}
    w(t):= -\sum_{m\in\N^*} \frac{u_0^m}{b_m} g_m(t) ,
\end{equation*}
satisfying $\|w\|_{L^2(0, T)}\lesssim \|u_0\|_{\Hdiv}$.
Hence the null-controllability of the system  \eqref{appendix:cont}.

\end{proof}

}

{\color{black}
\section{Extension to the case $\beta\neq 0$}
\label{sec:beta}

Suppose that Proposition \ref{prop:th} is proved for the case that $\beta=0$. Then, for any $B= \sum_n b_n \varphi_n$ \textcolor{black}{satisfying} Assumption \ref{asump2},
  we consider  $\tilde B= \sum_n \tilde b_n \varphi_n$ with $\tilde b_n= n^{ \beta} b_n$, which satisfies
  \begin{equation*}
      c\leq |\tilde b_n| \leq C, \quad \forall n\in \mathbb{N}. 
  \end{equation*}
According to Proposition \ref{prop:th}  for the case that $\beta=0$, for any $\lambda>0$, for the controlled system, 
  \begin{equation*}
      \partial_t \tilde u= A\tilde u+ \tilde B y(t),
  \end{equation*}
  we are able to find \textcolor{black}{an} operator $\tilde K\in \mathcal{L}(\Hdiv^{\frac{\alpha}{2}}; \mathbb{C})$ and an isomorphism \textcolor{black}{$\tilde{T}$} from 
  which is an isomorphism 
  \textcolor{black}{from} $\Hdiv^r$ to itself for any $r\in ( 1/2- \alpha, \alpha- 1/2)$ such that, $(\tilde T, \tilde K)$ is an isomorphism-feedback pair to the preceding system and, 
  \begin{equation*}
      \partial_t \tilde u= A\tilde u+ \tilde B \tilde K u,
  \end{equation*}
  is exponentially stable with decay rate $\lambda$.
  Now we introduce a map $M$ from $\Hdiv^{\infty}$ to $\Hdiv^{\infty}$ as, 
  \begin{align*}
      M: \Hdiv^{\infty}&\rightarrow \Hdiv^{\infty}, \\
      n^{-\beta} \varphi_n&\mapsto \varphi_n.
  \end{align*}
  Clearly, $M$ is an isomorphism from $\Hdiv^{\beta+ s}$ to $\Hdiv^{s}$ for every $s\in \mathbb{R}$, and the operator $M$  commutes with the operator $A$. 
  Then, for the controlled system,
\begin{equation*}
   \partial_t   u= A  u+ B y(t),
\end{equation*}
where $B= M^{-1} \tilde B$, we consider the feedback law (namely, we set $y(t)= K u(t)$),
$$K= \tilde K M\in \mathcal{L}(\Hdiv^{\beta+ \frac{\alpha}{2}}; \mathbb{C}),$$
as well as the transformation, 
\begin{equation*}
    T:= M^{-1} \tilde T M,
\end{equation*}
which is an isomorphism from $\Hdiv^{r}$ to itself  for any $r\in (\beta+ 1/2- \alpha, \beta+ \alpha- 1/2)$. It follows from simple calculation that the above-defined $(T,K)$ is an isomorphism-feedback pair. Actually, considering $\tilde u= M u$ we obtain,
\begin{align*}
    \partial_t \tilde u= \partial_t M u= M \partial_t  u= M(Au+ BK u)= A \tilde u+ \tilde B\tilde K \tilde u.
\end{align*}
\textcolor{black}{This ends the proof of this claim.}
 }

\section{Proof of Lemmas \ref{lem1}--\ref{lem:tech2}}\label{sec:appendixc}

\begin{proof}[Proof of Lemma \ref{lem1}] 
By assumption \textcolor{black}{(see Assumption \ref{hyp:alpha} and \eqref{hyp:1})}
\[
|\lambda_n-\lambda_m| \textcolor{black}{\geq} c n^{\alpha-1}|n-m|.
\]
\textcolor{black}{T}here exists a constant $C>0$ such that  $n^{\alpha-1}|n-m| \geq C|n-m|^{\alpha}$ if and only if (assuming $n\neq m$, otherwise the proof is trivial), 
\[
n^{\alpha-1} \geq C|n-m|^{\alpha-1}.
\]
But this follows from the strict monotonicity of the function $f(x)=x^{\alpha-1}$. 
\end{proof}

\begin{proof}[Proof of Lemma \ref{lem:tech1} and Lemma \ref{lem:tech2}]
Define $f(x):= x^s$ for $x\in [1, +\infty)$. There exists $c_0, C_0>0$ such that \begin{equation*}
    c_0 n^{s}\leq f(x)\leq C_0 n^{s}, \; \forall n\in \N^*, \forall x\in [n, n+1].
\end{equation*}
Concerning Lemma  \ref{lem:tech1},  suppose that $s\neq -1$, then 
\begin{equation*}
     \sum\limits_{n=1}^{p} n^{s}\leq c_0^{-1} \int_{1}^{p+1} f(x) dx= \frac{c_0^{-1}}{s+ 1} \left((p+1)^{s+1} -1\right)\leq C (1+ p^{1+ s}),
\end{equation*}
this ends the proof of Lemma \ref{lem:tech1}.

Next we turn to the proof of Lemma \ref{lem:tech2}. Given $s\in \mathbb{R}$, {\color{black} we are able to choose $\varepsilon>0$ such that $s+ \varepsilon\neq -1$.} Because there exists $C>0$ such that,
\begin{equation*}
    \log (n)\leq C n^{\varepsilon}, \; \forall n\in \N^*,
\end{equation*}
we can use Lemma \ref{lem:tech1} as well as the fact that $s+ \varepsilon\neq -1$. This yields,
\begin{equation*}
     \sum\limits_{n=1}^{p} n^{s}\log(n)\leq   \sum\limits_{n=1}^{p} n^{s+ \varepsilon}\leq C(1+p^{1+s+\varepsilon}).
\end{equation*}
\textcolor{black}{Once this estimation holds for any $\varepsilon>0$ such that $s+ \varepsilon\neq -1$, we deduce it for any $\varepsilon>0$.}

\end{proof}

\section{Proof of Property $(i)$ in Lemma \ref{lem:L2infqnbarqn}}\label{sec:app:lemme410}
 
To simplify the notations we assume that $\lambda_n\neq 0$.
 The proof can be easily adapted to the  case where $\lambda_1= 0$, since
the resolvant $\mathcal{A}_\lambda$ defined below is well-defined and invertible  on \textcolor{black}{$H$}.

Recall that $q_n\in {\color{black}\Hdiv}^{{\color{black}\alpha-\frac{1}{2}}-\varepsilon}$ for any $\varepsilon>0$.
By defining
$r_n= (\lambda_n+ \lambda/2)^{-1}$ we obtain, by definition \eqref{def-qn} of the $(q_n)$,
\begin{equation}\label{res:gn}
    (A-\lambda- \lambda_n) q_n=- \sum_{p\in \N^*} \varphi_p=: h, \; \textrm{ in } {\color{black}\Hdiv}^{{\color{black}-\frac{1}{2}-\varepsilon}}, \; \textrm{for } \varepsilon>0,
\end{equation}
which becomes, defining\footnote{It is not guaranteed that $\lambda/2$ is in the resolvent {\color{black}set} of $A$. However, a shift of the operator allows to conduct the same proof, with slightly more complicated notations (see for instance \cite{CGM}). For the sake of conciseness, we will assume that $\lambda/2$ is in the resolvent of $A$.} $\mathcal{A}_{\lambda}:= (A-\lambda/2)^{-1}$,
\begin{equation*}
    \mathcal{A}_{\lambda}q_n= r_n q_n-r_n \mathcal{A}_{\lambda}h, \; \textrm{ in } {\color{black}\Hdiv}^{{\color{black}\alpha -\frac{1}{2}-\varepsilon}}, \; \textrm{for } \varepsilon>0.
\end{equation*}

Now, suppose that the $\{q_n\}_{n\in \N^*}$ are not $\omega$-independent {\color{black} in ${\color{black}H}$}, then there exists a nontrivial sequence $\{c_n\}_{n\in \N^*}\in l^2(\N^{*})$ 
such that 
\begin{equation}\label{eq:448}
    \sum_{n\in \N^*} c_n q_n=0 \textrm{ in } {\color{black}H},
\end{equation}
which is well-defined thanks to Remark \ref{remark:rightbound}.

Next,  by applying $\mathcal{A}_{\lambda}$ to \eqref{eq:448}, we conclude,
\begin{equation*}
    \sum_{n\in \N^*} c_n r_n q_n= \left(\sum_{n\in \N^*} c_n r_n \right)\mathcal{A}_{\lambda} h \quad \textrm{ in } {\color{black}H},
\end{equation*}
where we have used the fact $r_n {\color{black}\in \ell^2}$.

Applying again $\mathcal{A}_{\lambda}$ we get, 
\begin{equation*}
      \sum_{n\in \N^*} c_n r_n^2 q_n= \left(\sum_{n\in \N^*} c_n r_n^2 \right)\mathcal{A}_{\lambda} h+ \left(\sum_{n\in \N^*} c_n r_n\right) \mathcal{A}_{\lambda}^{2}h \quad \textrm{ in }  {\color{black}H}.
\end{equation*}
By induction we easily derive,
\begin{equation}\label{App-C-induction-formula}
      \sum_{n\in \N^*} c_n r_n^m q_n= \sum_{i=1}^m \left(\sum_{n\in \N^*} c_n r_n^{m+1-i}\right) \mathcal{A}_{\lambda}^{i} h= \sum_{i=1}^m C_{m+1-i}  \mathcal{A}_{\lambda}^{i} h, \quad m\in \N^*,
\end{equation}
where, 
\begin{equation}\label{eq:452}
    C_{l}:= \sum_{n\in \N^*} c_n r_n^{l}< +\infty, \quad l\in \N^\ast.
\end{equation}
Let us now distinguish two cases:

\textbf{- First case: the $\{C_m\}$ are not identically zero.} We note,
\[m_0=\inf \{n\in \N^\ast, \quad C_n \neq 0\}.\]
Then, starting with \eqref{App-C-induction-formula} with $m=m_0$, we have by induction, 
\[\mathcal{A}_{\lambda}^m h\in \operatorname{span}\{q_n\}_{n\in \N^*}, \quad m\geq 1.\]
Suppose that  $\operatorname{span}\{q_n\}_{n\in\mathbb{N}^{*}}$ is not dense in ${\color{black}H}$, then there exists a nonzero $d=\sum_n d_n \varphi_n\in {\color{black}H}$ such that, 
\begin{equation}\label{eq:455}
    \langle g, d\rangle_{\textcolor{black}{H}}=0,\quad \forall g\in \textrm{span} \{q_n\}_{n\in\mathbb{N}^{*}},
\end{equation}
which in particular yields,
\begin{equation*}
    \langle \mathcal{A}_{\lambda}^{m}h, d\rangle_{{\color{black}H}}=0, \quad\forall \textcolor{black}{m}\in \N^*.
\end{equation*}
Recalling that $h=-\sum \varphi_n\in {\color{black}\Hdiv}^{-1}$, we get that, 
\begin{equation}\label{dnsum0}
   \sum_n \overline d_n r_n^m=0, \quad \forall \textcolor{black}{m}\in \N^*.
\end{equation}
By defining the complex variable function, 
\begin{equation*}
    G(z):= \sum_{n\in \N^*} \overline d_n r_n e^{r_n z}, \forall z\in \mathbb{C}.
\end{equation*}
By checking that the series expansion of the right-hand side is absolutely convergent, we deduce that this function is holomorphic. From \eqref{dnsum0} we know that $G^{\textcolor{black}{(m)}}(0)= 0, m\in \N$. Thus $G=0$, and further $d_n=0$, which leads to a contradiction. Therefore, we conclude in the first case if the $\{q_n\}$ are not $\omega$-independent in ${\color{black}H}$, then, 
\begin{equation}\label{spangndense}
   \textrm{span} \{q_n\}_{n\in \N^*} \textrm{ is  dense in }{\color{black}H}.
\end{equation}

\textbf{- Second case: the $\{C_m\}$ are identically zero.} Then we define the complex variable function,
\begin{equation*}
    \tilde{G}(z):= \sum_{n\in \N^*} c_n r_n e^{r_n z}.
\end{equation*}
This function is holomorphic. Moreover, as the $(C_m)$ are identically zero, it satisfies,
\[\tilde{G}^{(m)}(0)=0, \quad \forall m\in\mathbb{N},\] thus as previously $\tilde{G}= 0$ and therefore,
\[c_n=0, \quad \forall n\in\mathbb{N}^{*},\]
which is in contradiction with the definition of the $\{c_n\}_{n\in \N^*}$. Hence, in the second case that the $\{q_n\}$ are $\omega$-independent in ${\color{black}H}$. 

Therefore, since we either fall in the first or second case, this proves that the family $\{q_n\}$ is either $\omega$-independent or dense in ${\color{black}H}$, which ends the proof of the property $(i)$  in Lemma \ref{lem:L2infqnbarqn}.

\section{Proof that $ker(T^{*}) = \{0\}$}
\label{isomorph}
We proceed as presented in Section \ref{subsec:step8}.
Let $\rho\in\mathbb{C}$ to be chosen later on and let us look at $A+B K+\lambda \textcolor{black}{Id}+ \rho \textcolor{black}{Id}$. 

  {\bf 1)}
Let us denote $z:= \lambda+ \rho$, we try to investigate the invertibility of $\textcolor{black}{Id}+A^{-1}B K+z A^{-1}$ in the ${\color{black}\Hdiv}^{{\color{black}\alpha/2}}$ space. As $\rho$ can be chosen arbitrarily, $z\in\mathbb{C}$ can be as well. {\color{black}Remark here that if $A$ is not invertible, then, since the spectrum is countable, then there always exists $\delta\neq 0$ sufficiently close to $0$ such that $\tilde A:= A+ \delta$ is invertible. Hence, without loss of generality, we assume that $0$ is in the resolvent set of $A$.} We now consider two cases:
   
 \begin{itemize}
   \item  If $K(A^{-1}B )\neq -1$, then we \textcolor{black}{know} that the bounded operator $\textcolor{black}{Id}+A^{-1}B K$ is invertible. In fact, for any $f\in {\color{black}\Hdiv}^{{\color{black}\alpha/2}}$, we can check that 
   \begin{equation*}
       \varphi:= f-\frac{A^{-1}B  (Kf)}{1+ K(A^{-1}B )}\in {\color{black}\Hdiv}^{{\color{black}\alpha/2}},
   \end{equation*}
   is the unique solution to
   \begin{equation*}
       (\textcolor{black}{Id}+A^{-1}B K) \varphi= f.
   \end{equation*}
   Note that $A^{-1}$ is a compact operator in ${\color{black}\Hdiv}^{{\color{black}\alpha/2}}$ (since $A$ is a differential operator) thus a continuous operator in ${\color{black}\Hdiv}^{{\color{black}\alpha/2}}$  and $\textcolor{black}{Id}+A^{-1}B K$ is invertible, thus thanks to the openness of invertible operators, there exists $\varepsilon>0$ such that for any $|z|<\varepsilon$ 
   \begin{equation}
       (\textcolor{black}{Id}+A^{-1}B K)+ z A^{-1},
   \end{equation}
   is invertible in ${\color{black}\Hdiv}^{-{\color{black}\alpha/2}}$. 
   \item 
   If $K(A^{-1}B )= -1$, then one can check that 0 is an eigenvalue of $Id+A^{-1}B K$ with multiplicity 1 and the eigenspace is generated by $A^{-1}B $.   Indeed, it is clear that $A^{-1}B$ is an eigenvector of $Id+A^{-1}B K$ with eigenvalue 0. On the other hand, suppose that for some $v\in {\color{black}\Hdiv}^{{\color{black}\alpha/2}}$ we have $(Id+A^{-1}B K)v= 0$, then we immediately conclude that $ v= -(A^{-1}B) (Kv)\in \textrm{ span}\{A^{-1}B\}$. 
   
   Therefore, there exist small open neighborhoods $\Omega$ and  $\widetilde{\Omega}$ of $0$ in $\mathbb{C}$ satisfying (see for instance \cite{MR47254})
   \begin{gather}
   \label{eq:eqy}
       (Id+ A^{-1}B  K+ zA^{-1}) y(z)= \lambda(z) y(z), \\
       y(z): z\in \Omega\mapsto y(z)\in {\color{black}\Hdiv}^{{\color{black}\alpha/2}} \textrm{ is holomorphic,} \\
        \lambda(z): z\in \Omega\mapsto \lambda(z)\in \widetilde{\Omega}\subset \mathbb{C} \textrm{ is holomorphic,}\\
        \lambda(0)=0,\quad y_0:= y(0)= A^{-1}B,
   \end{gather}
in such fashion that {\color{black} for any $z\in \Omega$}, $\lambda(z)$ is the unique eigenvalue inside $\widetilde{\Omega}$. Recall that $\lambda(0)=0$, therefore only two cases are possible:
either $\lambda$ is identically $0$ in $\Omega$, or there exists a smaller neighborhood $\omega$ such that for any $z$ in  $\omega\setminus\{0\}$ there is $\lambda(z)\neq 0$. Let us show by contradiction that $\lambda$ is not identically $0$. Assume that it is. We now that there exists a sequence $(y_{k})_{k\in \N^*}$ in ${\color{black}\Hdiv}^{{\color{black}\alpha/2}}$ such that
\begin{equation*}
    y(z) = \sum\limits_{k=0}^{+\infty}y_{k}z^{k},
\end{equation*}
with $y_{0} = A^{-1}B$.
From \eqref{eq:eqy} and  the fact that $\lambda(z)=0$ in $\Omega$, 
\begin{equation*}
       (Id+ A^{-1}B  K+ zA^{-1}) \sum\limits_{k=0}^{+\infty}y_{k}z^{k}= 0, \; \textrm{ in } {\color{black}\Hdiv}^{{\color{black}\alpha/2}},
\end{equation*}
by unicity of the development in entire series we get,
\begin{equation}\label{eq:app:iteration}
         (Id+ y_0 K)y_k+ A^{-1} y_{k-1}=0, \; \textrm{ in } {\color{black}\Hdiv}^{{\color{black}\alpha/2}}, \; \forall k\in\mathbb{N}^{*}.  
\end{equation}
Recall that $K(y_0)= -1$, by applying  $K$ to the previous equation we  conclude that, 
 \begin{equation*}
      K(A^{-1}y_{k-1})=0, \; \forall k\in \N^* \Longrightarrow K(A^{-1}y_k)=0, \; \forall k\geq 0.
  \end{equation*}
By applying $K A^{-1}$ to Equation \eqref{eq:app:iteration} we  arrive at, 
\begin{equation*}
    K(A^{-1} y_k)+ K(A^{-1} y_0)(K y_k)+ K(A^{-2} y_{k-1})=0, \; \forall k\in \N^*,
\end{equation*}
thus,
  \begin{equation*}
       K(A^{-2}y_{k-1})=0, \; \forall k\in \N^* \Longrightarrow K(A^{-2}y_k)=0, \; \forall k\geq 0.
  \end{equation*}
  Then by successively applying $KA^{-(n-1)}$ to the same equation we arrive at, 
   \begin{equation*}
      K(A^{-n}y_k)=0, \; \forall k\geq 0, \forall n\geq 1,
  \end{equation*}
  which in particular yields, 
  \begin{equation*}
      K(A^{-n} y_0)=0,  \forall n\geq 1.
  \end{equation*}
  
  The previous equality implies,
  \begin{equation*}
      \sum_{m\in \N^*} \frac{b_m K_m}{\lambda_m^{l}}=0, \forall l\geq 2.
  \end{equation*}
  Again using the holomorphic function technique in Appendix \ref{sec:app:lemme410}, we conclude that $b_m K_m=0$, which is a contradiction.
   Therefore, we know that there exists $\varepsilon>0$ such that for any $z\in\mathbb{C}$ with $|z|<\varepsilon$, $z\in\Omega$ and $\lambda(z)\neq0$. Since $\lambda(z)$ is the unique eigenvalue inside $\widetilde{\Omega}$,  $\textcolor{black}{Id}+A^{-1}B K+ z A^{-1} $ is invertible.
  \end{itemize}
   In both cases there exists at least a sequence of $\{z_k\}$ converging to 0 such that ${Id}+A^{-1}B K+ \lambda A^{-1}+ (z_k-\lambda) A^{-1}$ is invertible in ${\color{black}\Hdiv}^{{\color{black}\alpha/2}}$. As the spectrum of $A+ \rho \textcolor{black}{Id}$ is discrete, we can find some $\rho:=z_k- \lambda$, such that both 
   \begin{equation*}
       A+ \rho \textcolor{black}{Id} \; \textrm{ and }\;  A+ B K+ \lambda \textcolor{black}{Id}+ \rho \textcolor{black}{Id}= A({Id}+A^{-1}B K+ \lambda A^{-1}+ \rho A^{-1}),
   \end{equation*}
   are invertible operators from ${\color{black}\Hdiv}^{{\color{black}\alpha/2}}$ to ${\color{black}\Hdiv}^{-{\color{black}\alpha/2}}$. This ends the proof of the first point.\\

 {\bf 1)*} Alternatively, this first step can be proved using the following direct method. We show that there exists some effectively computable real number $\rho_0>0$ such that for any $\rho\in [\rho_0, +\infty)$ the operators,   
   \begin{equation*}
       A+ \rho Id \; \textrm{ and }\;  A+ B K+ \lambda Id+ \rho Id: {\color{black}\Hdiv}^{{\color{black}\alpha/2}}\rightarrow {\color{black}\Hdiv}^{-{\color{black}\alpha/2}}
   \end{equation*}
are invertible. Notice that since the spectrum of $A$ belongs to $i\R$, it is straightforward that $A+ \rho Id$ is invertible. It suffices to show that $ A+ B K+ \lambda Id+ \rho Id$ is invertible.
We have the following lemma.
\begin{lem}\label{lemma:directinvertible}
There exists some effectively computable $\rho_0>0$ such that for any $\rho\in [\rho_0, +\infty)$ there is,
\begin{equation*}
   c_{\rho}:=  \sum_{m\in \N^*} \frac{b_m K_m}{\rho+ \lambda+ \lambda_m} \; \textrm{ satisfying } \; |c_{\rho}| \leq \frac{1}{2}.
\end{equation*}
\end{lem}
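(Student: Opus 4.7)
The plan is to reduce the estimate to a simple comparison with the tail of the series $\sum m^{-3/2}$, using two structural facts that are already established. First, by Assumption \ref{asump1} the sequence $(b_m)_{m\in\mathbb{N}^*}$ is uniformly bounded from above, and by Lemma \ref{lemkncomp}(1) so is the sequence $(K_m)_{m\in\mathbb{N}^*}$. Hence there exists $M>0$ such that
\[
|b_m K_m|\leq M \quad \text{for all } m\in\mathbb{N}^*.
\]
Second, by \eqref{eigenval} the eigenvalues $\lambda_m$ are purely imaginary with $|\lambda_m|\geq c\,m^{3/2}$ for some constant $c>0$ (which is the key arithmetic used throughout Section \ref{sec:proof}). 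Since we are free to choose $\rho\in\mathbb{R}_{+}$ and $\lambda>0$ is real, the complex number $\rho+\lambda+\lambda_m$ splits into a real part $\rho+\lambda$ and an imaginary part of modulus $|\lambda_m|$, so
\[
|\rho+\lambda+\lambda_m|\;=\;\sqrt{(\rho+\lambda)^2+|\lambda_m|^2}\;\geq\;\max\bigl(\rho+\lambda,\;|\lambda_m|\bigr).
\]

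With these two ingredients the bound is obtained by a standard low/high frequency decomposition. For any integer $N\in\mathbb{N}^*$ I would write
\[
|c_\rho|\;\leq\;\sum_{m\leq N}\frac{|b_m K_m|}{|\rho+\lambda+\lambda_m|}\;+\;\sum_{m>N}\frac{|b_m K_m|}{|\rho+\lambda+\lambda_m|}\;\leq\;\frac{M\,N}{\rho+\lambda}\;+\;\frac{M}{c}\sum_{m>N}\frac{1}{m^{3/2}}.
\]
The tail $\sum_{m>N}m^{-3/2}$ is bounded by $2/\sqrt{N}$, so one first fixes $N=N_0$ large enough that $(2M)/(c\sqrt{N_0})\leq 1/4$, and then chooses $\rho_0>0$ large enough that $MN_0/(\rho_0+\lambda)\leq 1/4$. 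Both thresholds are explicitly computable in terms of $M$, $c$ and $\lambda$, matching the ``effectively computable'' statement of the lemma. For every $\rho\geq\rho_0$ one then gets $|c_\rho|\leq 1/2$ as required.

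There is no genuine obstacle here; the only subtle point is recognising that the problem is \emph{not} of the same nature as the delicate small-denominator estimates in Step~(4), because neither $\rho+\lambda$ nor $\lambda_m$ can come close to cancelling one another: $\rho+\lambda$ is real positive while $\lambda_m$ is imaginary. This clean geometry allows the pointwise bound $1/|\rho+\lambda+\lambda_m|\leq 1/|\lambda_m|$, which serves as a summable dominating function, and the rest is a routine splitting argument. The main purpose of the lemma is then to give an effectively computable way to choose $\rho_0$ in the invertibility argument of Step (1)*, avoiding the use of Kato's perturbation theory employed in Step (1).
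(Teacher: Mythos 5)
Your proposal is correct and follows essentially the same route as the paper: bound $|b_mK_m|$ uniformly (via Lemma \ref{lemkncomp}), use that $\rho+\lambda$ is real and $\lambda_m$ imaginary to get $|\rho+\lambda+\lambda_m|\geq\max(\rho+\lambda,|\lambda_m|)$, split the sum at a frequency $N_0$ chosen so the tail is at most $1/4$, and then take $\rho_0$ large enough to control the finitely many low modes. The only difference is cosmetic: you make the tail estimate explicit through $|\lambda_m|\gtrsim m^{3/2}$, where the paper simply invokes summability of $1/|\lambda_m|$.
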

\begin{proof}[Proof of Lemma \ref{lemma:directinvertible}]
We know from Lemma \ref{lemkncomp} that $|K_m|$ is uniformly bounded, thus there exists some $C_0>0$ such that $|b_m K_m|\leq C_0$ for $\forall m\in \N^*$, hence
\begin{equation*}
   |c_{\rho}|\leq  \sum_{m\in \N^*} \frac{|b_m K_m|}{|\rho+ \lambda+ \lambda_m|}\leq  \sum_{m\in \N^*} \frac{C_0}{|\rho+ \lambda+ \lambda_m|}.
\end{equation*}
Also notice the existence of $N_0\in\N^*$ such that 
\begin{equation*}
    \sum_{m> N_0} \frac{C_0}{|\lambda_m|}\leq \frac{1}{4}.
\end{equation*}
By choosing $\rho_0>0$ large enough there is 
\begin{equation*}
    \sum_{m\in \N^*} \frac{|b_m K_m|}{|\rho+ \lambda+ \lambda_m|}\leq \sum_{m\leq N_0} \frac{C_0}{|\rho_0+ \lambda|}+ \sum_{m> N_0} \frac{C_0}{|\lambda_m|}\leq \frac{1}{2}.
\end{equation*}
\end{proof}
Now we come back and prove that for any $\rho\in [\rho_0, +\infty)$ the operator $ A+ B K+ \lambda Id+ \rho Id$ is invertible. It suffices to show that for any $g\in {\color{black}\Hdiv}^{-{\color{black}\alpha/2}}$ there exists a unique $f\in {\color{black}\Hdiv}^{{\color{black}\alpha/2}}$ such that, 
\begin{gather}
    (A+ B K+ \lambda Id+ \rho Id) f= g, \label{eq:app:inv:fg}\\
    \|f\|_{{\color{black}\Hdiv}^{{\color{black}\alpha/2}}}\lesssim \|g\|_{{\color{black}\Hdiv}^{-{\color{black}\alpha/2}}}. \label{eq:app:inv:fgine}
\end{gather}
Let us denote by, 
\begin{equation*}
    g:= \sum_{n\in \N^*} g_n n^{{\color{black}\alpha/2}} {\color{black} \varphi_n} , \; f:=  \sum_{n\in \N^*} f_n n^{-{\color{black}\alpha/2}} {\color{black} \varphi_n}.
\end{equation*}
By comparing the coefficients in both side of the equation \eqref{eq:app:inv:fg} we get, 
\begin{equation*}
    f_n n^{-{\color{black}\alpha/2}} (\lambda_n+ \lambda+ \rho)+ b_n K(f)= n^{{\color{black}\alpha/2}} g_n,
\end{equation*}
hence $(f_n)_n$ is implicitly solved by, 
\begin{equation*}
    f_n= \frac{ n^{{\color{black}\alpha/2}} g_n- b_n K(f)}{ n^{-{\color{black}\alpha/2}} (\lambda_n+ \lambda+ \rho)}.
\end{equation*}
The previous implicit formula of $f$ yields,
\begin{equation*}
    K(f)= \sum_{m\in \N^*} K_m f_m m^{-{\color{black}\alpha/2}}= \sum_{m\in \N^*} \frac{K_m g_m m^{{\color{black}\alpha/2}}}{\lambda_m+ \lambda+ \rho}- K(f)\left(\sum_{m\in \N^*} \frac{b_m K_m}{\lambda_m+ \lambda+ \rho}\right).
\end{equation*} 
Thanks to Lemma \ref{lemma:directinvertible}, the coefficients $(f_n)_n$ are uniquely determined by 
\begin{equation}
    f_n= \frac{n^{{\color{black}\alpha}} g_n}{\lambda_n+ \lambda+ \rho}- \frac{n^{{\color{black}\alpha/2}} b_n}{(1+ c_{\rho})(\lambda_n+ \lambda+ \rho)}\left(\sum_{m\in \N^*} \frac{K_m g_m m^{{\color{black}\alpha/2}}}{\lambda_m+ \lambda+ \rho}\right).
\end{equation}
Since 
\begin{equation*}
   \left| \sum_{m\in \N^*} \frac{K_m g_m m^{{\color{black}\alpha/2}}}{\lambda_m+ \lambda+ \rho}\right|\lesssim \sum_{m\in \N^*} \frac{m^{{\color{black}\alpha/2}}|g_m|}{|\lambda_m|}\lesssim \|(g_m)_m\|_{l^2},
\end{equation*}
we have 
\begin{equation*}
    \|(f_n)_n\|_{l^2}\lesssim \|(g_n)_n\|_{l^2}+ \|(g_m)_m\|_{l^2} \left\|\left(\frac{n^{{\color{black}\alpha/2}} b_n}{(1+ c_{\rho})(\lambda_n+ \lambda+ \rho)}\right)_n\right\|_{l^2}\lesssim \|(g_n)_n\|_{l^2}.
\end{equation*}
This finishes the proof of the inequality \eqref{eq:app:inv:fgine}.\\

    {\bf 2)} Let us assume that $\ker T^*\neq\{0\}$ and let $\rho$ be defined as in 1). We are going to show that there exists an eigenvector of $A+\rho Id$ in $ker(T^{*})$ and deduce that there exists $n\in \N^*$ such that $n^{{\color{black}\alpha/2}}\varphi_{n}\in ker(T^{*})$. We know from the operator equality \eqref{op-eq} that
    \begin{equation*}
        T(A+B K+\lambda \textcolor{black}{Id}+ \rho \textcolor{black}{Id})= AT+ \rho \textcolor{black}{T},
    \end{equation*}
    holds when the operators are seen as acting on ${\color{black}\Hdiv}^{{\color{black}\alpha/2}}$ to ${\color{black}\Hdiv}^{-{\color{black}\alpha/2}}$. Thus from the invertibility of the two operators (from point 1)),
    \begin{equation}
    \label{eq:appabove}
        (A+ \rho \textcolor{black}{Id})^{-1} T= T(A+B K+\lambda \textcolor{black}{Id}+\rho \textcolor{black}{Id})^{-1},
    \end{equation}
    where the operator are seen as acting on ${\color{black}\Hdiv}^{-{\color{black}\alpha/2}}$ to ${\color{black}\Hdiv}^{{\color{black}\alpha/2}}$.
    Since $ker(T^{*})\neq\{0\}$ we can take $h\neq 0$ such that $h\in \ker T^*$ and  $h\in {\color{black}\Hdiv}^{-{\color{black}\alpha/2}}$. We deduce from \eqref{eq:appabove} that for any $ \varphi\in {\color{black}\Hdiv}^{-{\color{black}\alpha/2}}$,
    \begin{align*}
        0&= \langle (A+\rho \textcolor{black}{Id})^{-1}T \varphi- T(A+B K+\lambda \textcolor{black}{Id}+\rho \textcolor{black}{Id})^{-1}\varphi, h\rangle_{{\color{black}\Hdiv}^{-{\color{black}\alpha/2}}}, \\
        &= \langle  \varphi, T^* (A^{*}+\overline{\rho} \textcolor{black}{Id})^{-1} h\rangle_{{\color{black}\Hdiv}^{-{\color{black}\alpha/2}}}- \langle  \textcolor{black}{(A+B K+\lambda \textcolor{black}{Id}+\rho \textcolor{black}{Id})^{-1}}\varphi, T^*  h\rangle_{{\color{black}\Hdiv}^{-{\color{black}\alpha/2}}}, \\
         &= \langle  \varphi, T^* (A^{*}+\overline{\rho} Id)^{-1} h\rangle_{{\color{black}\Hdiv}^{-{\color{black}\alpha/2}}}, 
    \end{align*}
    where $A^{*}$ is the adjoint of $A$. The above implies that $T^* (A^{*}+\overline{\rho} Id)^{-1} h= 0$ in ${\color{black}\Hdiv}^{-{\color{black}\alpha/2}}$, thus $(A^{*}+\overline{\rho} \textcolor{black}{Id})^{-1} h\in$ $\ker T^*$. Namely,  we have deduced that
    \begin{equation*}
        (A^{*}+ \overline{\rho} {Id})^{-1}: \ker T^*\rightarrow \ker T^*.
    \end{equation*}
Because $\ker T^*$ is of finite dimension (recall that $T$ is Fredholm, hence $T^{*}$ is) and not reduced to $\{0\}$ there exists an eigenfunction $h\in ker(T^{*})$ of $(A^{*}+ \overline{\rho} {Id})^{-1}$, associated to an eigenvalue $\mu\neq 0$ (since the operator $(A^{*}+ \overline{\rho} {Id})^{-1}$ is invertible). Thus \begin{equation}
\label{eq:eqh}
       (A^{*}+\overline{\rho} \textcolor{black}{Id})^{-1}h= \mu h,
   \end{equation} 
which in particular implies that $h\in {\color{black}\Hdiv}^{{\color{black}\alpha/2}}$. We immediately deduce that $h$ is an eigenfunction of $A^{*}$ in ${\color{black}\Hdiv}^{-{\color{black}\alpha/2}}$. Moreover, we know from \eqref{eq:eqh} that 
    \[A^{*} h= \frac{1-\overline{\rho}\mu}{\mu} h.\]
Now we would like to conclude that there exists $n\in \N^*$ and $C\neq 0$ such that $h= C n^{{\color{black}\alpha/2}}\varphi_{n}$. Note that since $(n^{{\color{black}\alpha/2}}\varphi_{n})_{n\in\mathbb{N}^{*}}$ is an orthonormal basis of eigenvectors of $A$, it is also a basis of eigenvector of $A^{*}$ (associated to eigenvalues $(\overline{\lambda_{n}})_{n\in\mathbb{N}^{*}}$).
To obtain such a conclusion, we notice that the eigenspaces of $A^{*}$ (in ${\color{black}\Hdiv}^{-{\color{black}\alpha/2}}$) have dimension 1,
in particular the dimension of the eigenspace associated to $(1-\overline{\rho}\mu)/\mu$ is one, and therefore there exist some $n\in \N^*$ and $C\neq0$ such that $h= C n^{{\color{black}\alpha/2}}\varphi_{n}$.
    \\
    
     {\bf 3)} From point 2), we know that if $ker(T^{*})\neq \{0\}$ there exists $n\in \N^*$ such that $n^{{\color{black}\alpha/2}}\varphi_{n}\in ker(T^{*})$, thus,
     \begin{equation*}
         \langle T\varphi, n^{{\color{black}\alpha/2}}\varphi_n \rangle_{H^{-{\color{black}\alpha/2}}}=0, \; \forall \varphi\in {\color{black}\Hdiv}^{-{\color{black}\alpha/2}}. 
     \end{equation*}
     We know  that this is impossible: as $TB=B$ holds in ${\color{black}\Hdiv}^{-{\color{black}\alpha/2}}$, we can take $\varphi:= B$ to achieve,
      \begin{equation*}
         0=\langle TB, n^{{\color{black}\alpha/2}}\varphi_n \rangle_{{\color{black}\Hdiv}^{-{\color{black}\alpha/2}}}= \langle B, n^{{\color{black}\alpha/2}}\varphi_{n} \rangle_{{\color{black}\Hdiv}^{-{\color{black}\alpha/2}}}= \frac{b_n}{n^{{\color{black}\alpha/2}}}, 
     \end{equation*}
     which is in contradiction with the fact that $|b_n|$ is uniformly bounded \textcolor{black}{from} below. Hence $ker(T^{*})=\{0\}$.

\section{Construction of the spectral decomposition}
\label{sec:decomp}

{\color{black}
Denote $(\lambda_n)_{n\in \N^{*}}$ the eigenvalues of $A$ and assume that the multiplicity $m_n\in \N^{*}$ of the eigenvalue $\lambda_n$ is finite. Assume $m=\max_{n\in \N^{*}} m_n < \infty$. We construct a spectral decomposition of $A$ such that
\begin{equation}
\label{eq:propdecomp}
\begin{split}
H &= H_{1}\oplus ...\oplus H_{m},\\
&\begin{cases}
H_{i} &= \text{Span}\{(\varphi_{n}^{i})_{n\in\mathbb{N}^{*}}\}\;\text{for}\;i\in\{1,...m_{l}\},\\
H_{i} &= \text{Span}\{(\varphi_{n}^{i})_{n\in\{1,...,N_{i}\}}\}\;\text{for}\;i\in\{m_{l}+1,...,m\},
\end{cases}
\end{split}
\end{equation}
where $m_{l}\in\{1,...,m\}$, $\varphi_{n}^{i}$ is an eigenvector of $A$ and $Span$ is taken with a closure in $H$. Moreover, denoting $\lambda_{n}^{i}$ the eigenvalue associated to $\varphi_{n}^{i}$, we have for any $n\neq m$
\begin{equation}
    \lambda_{n}^{i}\neq\lambda_{m}^{i}.
\end{equation}
In other words the spaces $H_{i}$ are eigenspaces of $A$ with only simple eigenvalues. To construct this decomposition we proceed as follows:

we denote by $m_{l}\in\{1,...,m\}$ the highest multiplicity such that there is an infinite number of eigenvalues with multiplicity $m_{l}$. Then we construct $m$ set of eigenvector--eigenvalue pairs as follows 
\begin{itemize}
    \item $(\varphi_{n}^{1},\lambda_{n}^{1})_{n\in\mathbb{N}^{*}}$ are eigenvalues--eigenvectors pairs selected such that $(\lambda_{n}^{1})_{n\in\mathbb{N}^{*}}$ enumerate all the distincts eigenvalues in increasing order and $(\varphi_{n}^{1})_{n\in\N^{*}}$ is a family where each $\varphi_{n}^{1}$ is an eigenvector associated to $\lambda_{n}^{1}$.
    \item $(\varphi_{n}^{2},\lambda_{n}^{{\color{black}2}})_{n\in\mathbb{N}^{*}}$ are eigenvalues--eigenvectors pairs selected such that $(\lambda_{n}^{2})_{n\in\mathbb{N}^{*}}$ enumerate all the distincts eigenvalues with multiplicity at least $2$ in increasing order and $(\varphi_{n}^{2})_{n\in\N^{*}}$ is a family where each $\varphi_{n}^{2}$ is an eigenvector associated to $\lambda_{n}^{2}$ and is orthogonal to all eigenvectors of $(\varphi_{n}^{1})_{n\in\N^{*}}$.
    \item for any $i\in\{1,...,m_{l}\}$, $(\varphi_{n}^{i},\lambda_{n}^{i})_{n\in\mathbb{N}^{*}}$ are eigenvalues--eigenvectors pairs selected such that $(\lambda_{n}^{i})_{n\in\mathbb{N}^{*}}$ enumerate all the distincts eigenvalues with multiplicity at least $i$ in increasing order and $(\varphi_{n}^{i})_{n\in\mathbb{N}^{*}}$ is a family where each $\varphi_{n}^{i}$ is an eigenvector associated to $\lambda_{n}^{i}$ and is orthogonal to all eigenvectors of  $(\varphi_{n}^{k})_{(n,k)\in\mathbb{N}^{*}\times\{1,...,i-1\}}$.
    \item for any $i\in\{m_{l}+1,...,m\}$, we denote by $N_{i}$ the (finite) number of eigenvalues  with multiplicity $i$ and $(\varphi_{n}^{i},\lambda_{n}^{i})_{n\in\{1,..,N_{i}\}}$ are eigenvalues--eigenvectors pairs selected such that $(\lambda_{n}^{i})_{n\in\{1,..,N_{i}\}}$ enumerate all the distincts eigenvalues with multiplicity at least $i$ in increasing order and $(\varphi_{n}^{i})_{n\in\{1,..,N_{i}\}}$ is a family where each $\varphi_{n}^{i}$ is an eigenvector associated to $\lambda_{n}^{i}$ and is orthogonal to all eigenvectors of  $(\varphi_{n}^{k})_{(n,k)\in\mathbb{N}\times\{1,...i-1\}}$.
\end{itemize}
Consequently we have with this decomposition
\begin{equation*}
    \cup\{(\varphi, \lambda): \textrm{eigenpairs of } A\}= \left( \cup_{i=m_{l}+1}^{m} \cup_{n=1}^{N_{i}} (\varphi^i_n, \lambda^i_n)\right) \cup \left(\cup_{i=1}^{m_{l}} \cup_{n\in\mathbb{N}^{*}} (\varphi^i_n, \lambda^i_n)\right), 
\end{equation*}
From this point it suffices to define, for any $i\in\{1,...,m\}$
\begin{equation}
    H_{i} = \text{Span}((\varphi_{n}^{i})_{n\in\mathbb{N}}),
\end{equation}
(where Span is again taken with a closure in $H$)
and for any $i\in\{m_{l}+1,...m\}$ (if any)
\begin{equation}
    H_{i} = \text{Span}((\varphi_{n}^{i})_{\{1,...,N_{i}\}}),
\end{equation}
to have \eqref{eq:propdecomp}.
}

\bibliographystyle{plain}    
\bibliography{waterwave}

\begin{thebibliography}{10}

\bibitem{zbMATH06919606}
T.~{Alazard}.
\newblock {Stabilization of the water-wave equations with surface tension}.
\newblock {\em {Ann. PDE}}, 3(2):41, 2017.
\newblock Id/No 17.

\bibitem{Alazard-2018}
T.~Alazard.
\newblock Stabilization of gravity water waves.
\newblock {\em J. Math. Pures Appl. (9)}, 114:51--84, 2018.

\bibitem{Alazard-jems}
T.~Alazard, P.~Baldi, and D.~Han-Kwan.
\newblock Control of water waves.
\newblock {\em J. Eur. Math. Soc. (JEMS)}, 20(3):657--745, 2018.

\bibitem{alazardburq}
T.~{Alazard}, N.~{Burq}, and C.~{Zuily}.
\newblock {On the water-wave equations with surface tension}.
\newblock {\em {Duke Math. J.}}, 158(3):413--499, 2011.

\bibitem{BK1}
A.~Balogh and M.~Krsti{\'c}.
\newblock Infinite dimensional backstepping-style feedback transformations for
  a heat equation with an arbitrary level of instability.
\newblock {\em Eur. J. Control}, 8(3):165--175, 2002.

\bibitem{Barbu-book}
V.~Barbu.
\newblock {\em Controllability and stabilization of parabolic equations},
  volume~90 of {\em Progress in Nonlinear Differential Equations and their
  Applications}.
\newblock Birkh\"{a}user/Springer, Cham, 2018.
\newblock Subseries in Control.

\bibitem{Bardos-Lebeau-Rauch}
C.~Bardos, G.~Lebeau, and J.~Rauch.
\newblock Sharp sufficient conditions for the observation, control, and
  stabilization of waves from the boundary.
\newblock {\em SIAM J. Control Optim.}, 30(5):1024--1065, 1992.

\bibitem{BeauchardLaurent}
K.~Beauchard and C.~Laurent.
\newblock Local controllability of 1{D} linear and nonlinear {S}chr\"odinger
  equations with bilinear control.
\newblock {\em J. Math. Pures Appl. (9)}, 94(5):520--554, 2010.

\bibitem{Breiten-Kunisch-Rodriques}
T.~Breiten, K.~Kunisch, and S.~Rodrigues.
\newblock Feedback stabilization to nonstationary solutions of a class of
  reaction diffusion equations of {F}itz{H}ugh-{N}agumo type.
\newblock {\em SIAM J. Control Optim.}, 55(4):2684--2713, 2017.

\bibitem{Brezis2011}
Ha{\"i}m Br{\'e}zis.
\newblock {\em Functional analysis, Sobolev spaces and partial differential
  equations}, volume~2.
\newblock Springer, 2011.

\bibitem{christensen2003introduction}
O.~Christensen et~al.
\newblock {\em An introduction to frames and Riesz bases}, volume~7.
\newblock Springer, 2003.

\bibitem{CoronBook}
J.-M. Coron.
\newblock {\em Control and nonlinearity}, volume 136 of {\em Mathematical
  Surveys and Monographs}.
\newblock American Mathematical Society, Providence, RI, 2007.

\bibitem{Coron_ICIAM15}
J.-M. Coron.
\newblock Stabilization of control systems and nonlinearities.
\newblock In {\em Proceedings of the 8th {I}nternational {C}ongress on
  {I}ndustrial and {A}pplied {M}athematics}, pages 17--40. Higher Ed. Press,
  Beijing, 2015.

\bibitem{CGM}
J.-M. Coron, L.~Gagnon, and M.~Morancey.
\newblock Rapid stabilization of a linearized bilinear 1-{D} {S}chr\"odinger
  equation.
\newblock {\em J. Math. Pures Appl. (9)}, 115:24--73, 2018.

\bibitem{coron:hal-03161523}
J.-M. Coron, A.~Hayat, S.~Xiang, and C.~Zhang.
\newblock Stabilization of the linearized water tank system.
\newblock {\em Arch. Ration. Mech. Anal.}, 244(3):1019--1097, 2022.

\bibitem{CoronHuOlive16}
J.-M. Coron, L.~Hu, and G.~Olive.
\newblock Stabilization and controllability of first-order integro-differential
  hyperbolic equations.
\newblock {\em J. Funct. Anal.}, 271(12):3554--3587, 2016.

\bibitem{MR4160602}
J.-M. Coron, L.~Hu, G.~Olive, and P.~Shang.
\newblock Boundary stabilization in finite time of one-dimensional linear
  hyperbolic balance laws with coefficients depending on time and space.
\newblock {\em J. Differential Equations}, 271:1109--1170, 2021.

\bibitem{CoronLu14}
J.-M. Coron and Q.~L{\"u}.
\newblock Local rapid stabilization for a {K}orteweg-de {V}ries equation with a
  {N}eumann boundary control on the right.
\newblock {\em J. Math. Pures Appl. (9)}, 102(6):1080--1120, 2014.

\bibitem{CoronLu15}
J.-M. Coron and Q.~L{\"u}.
\newblock Fredholm transform and local rapid stabilization for a
  {K}uramoto--{S}ivashinsky equation.
\newblock {\em J. Differential Equations}, 259(8):3683--3729, 2015.

\bibitem{Coron-Trelat-2004}
J.-M. Coron and E.~Tr\'{e}lat.
\newblock Global steady-state controllability of one-dimensional semilinear
  heat equations.
\newblock {\em SIAM J. Control Optim.}, 43(2):549--569, 2004.

\bibitem{GLM}
L.~Gagnon, P.~Lissy, and S.~Marx.
\newblock A {F}redholm transformation for the rapid stabilization of a
  degenerate parabolic equation.
\newblock {\em SIAM J. Control Optim.}, 59(5):3828--3859, 2021.

\bibitem{Gagnon-Hayat-Xiang-Zhang}
Ludovick Gagnon, Amaury Hayat, Shengquan Xiang, and Christophe Zhang.
\newblock Fredholm transformation on {L}aplacian and rapid stabilization for
  the heat equation.
\newblock {\em J. Funct. Anal.}, 283(12):Paper No. 109664, 67, 2022.

\bibitem{zbMATH04123624}
A.~{Haraux}.
\newblock {S\'eries lacunaires et contr\^ole semi-interne des vibrations d'une
  plaque rectangulaire. (Lacunary series and semi-internal control of
  vibrations of a rectangular plate)}.
\newblock {\em {J. Math. Pures Appl. (9)}}, 68(4):457--465, 1989.

\bibitem{Hayat-Shang-2019}
A.~Hayat and P.~Shang.
\newblock A quadratic {L}yapunov function for {S}aint-{V}enant equations with
  arbitrary friction and space-varying slope.
\newblock {\em Automatica J. IFAC}, 100:52--60, 2019.

\bibitem{Hayat-Shang-2021}
A.~Hayat and P.~Shang.
\newblock Exponential stability of density-velocity systems with boundary
  conditions and source term for the {$H^2$} norm.
\newblock {\em J. Math. Pures Appl. (9)}, 153:187--212, 2021.

\bibitem{SaintVenantPI}
Amaury Hayat.
\newblock P{I} controllers for the general {S}aint-{V}enant equations.
\newblock {\em J. \'{E}c. polytech. Math.}, 9:1431--1472, 2022.

\bibitem{MR1359765}
V.~Komornik.
\newblock {\em Exact controllability and stabilization}.
\newblock RAM: Research in Applied Mathematics. Masson, Paris; John Wiley \&
  Sons, Ltd., Chichester, 1994.
\newblock The multiplier method.

\bibitem{komornikStab}
V.~Komornik.
\newblock Rapid boundary stabilization of linear distributed systems.
\newblock {\em SIAM J. Control Optim.}, 35(5):1591--1613, 1997.

\bibitem{KriegerXiang2020}
J.~Krieger and S.~Xiang.
\newblock Boundary stabilization of focusing nlkg near unstable equilibria:
  radial case.
\newblock {\em Preprint, arXiv: 2004.07616}, 2020.

\bibitem{KrsticSmyshlyaev_Book}
M.~Krsti{\'c} and A.~Smyshlyaev.
\newblock {\em Boundary control of {PDE}s}, volume~16 of {\em Advances in
  Design and Control}.
\newblock Society for Industrial and Applied Mathematics (SIAM), Philadelphia,
  PA, 2008.
\newblock A course on backstepping designs.

\bibitem{zbMATH02172228}
D.~{Lannes}.
\newblock {Well-posedness of the water-waves equations}.
\newblock {\em {J. Am. Math. Soc.}}, 18(3):605--654, 2005.

\bibitem{zbMATH06168816}
D.~{Lannes}.
\newblock {\em {The water waves problem. Mathematical analysis and
  asymptotics}}, volume 188.
\newblock Providence, RI: American Mathematical Society (AMS), 2013.

\bibitem{Lasiecka-Triggiani-book}
I.~Lasiecka and R.~Triggiani.
\newblock {\em Control theory for partial differential equations: continuous
  and approximation theories. {II}}, volume~75 of {\em Encyclopedia of
  Mathematics and its Applications}.
\newblock Cambridge University Press, Cambridge, 2000.
\newblock Abstract hyperbolic-like systems over a finite time horizon.

\bibitem{PazyBook}
A.~Pazy.
\newblock {\em Semigroups of linear operators and applications to partial
  differential equations}, volume~44 of {\em Applied Mathematical Sciences}.
\newblock Springer-Verlag, New York, 1983.

\bibitem{Reid-linearww}
Russell~M. Reid.
\newblock Control time for gravity-capillary waves on water.
\newblock {\em SIAM J. Control Optim.}, 33(5):1577--1586, 1995.

\bibitem{Su-2021}
P.~Su.
\newblock Asymptotic behaviour of a linearized water waves system in a
  rectangle.
\newblock {\em preprint arXiv:2104.00286}, 2021.

\bibitem{Su-Tucsnak-Weiss}
P.~Su, M.~Tucsnak, and G.~Weiss.
\newblock Stabilizability properties of a linearized water waves system.
\newblock {\em Systems Control Lett.}, 139:104672, 10, 2020.

\bibitem{MR47254}
B.~Sz.-Nagy.
\newblock Perturbations des transformations lin\'{e}aires ferm\'{e}es.
\newblock {\em Acta Sci. Math. (Szeged)}, 14:125--137, 1951.

\bibitem{Trelat-Wang-Xu}
E.~Tr\'{e}lat, G.~Wang, and Y.~Xu.
\newblock Characterization by observability inequalities of controllability and
  stabilization properties.
\newblock {\em Pure Appl. Anal.}, 2(1):93--122, 2020.

\bibitem{TucsnakWeissBook}
M.~Tucsnak and G.~Weiss.
\newblock {\em Observation and control for operator semigroups}.
\newblock Birkh\"auser Advanced Texts: Basler Lehrb\"ucher. Birkh\"auser
  Verlag, Basel, 2009.

\bibitem{Urquiza}
J.~M. Urquiza.
\newblock Rapid exponential feedback stabilization with unbounded control
  operators.
\newblock {\em SIAM J. Control Optim.}, 43(6):2233--2244 (electronic), 2005.

\bibitem{Vest}
A.~Vest.
\newblock Rapid stabilization in a semigroup framework.
\newblock {\em SIAM J. Control Optim.}, 51(5):4169--4188, 2013.

\bibitem{Xiang-NS-2020}
S.~Xiang.
\newblock Small-time local stabilization of the two dimensional incompressible
  {N}avier-{S}tokes equations.
\newblock {\em Preprint, arXiv:2010.13696, to appear in Ann. Inst. H. Poincaré
  C Anal. Non Linéaire}.

\bibitem{Xiang-heat-2020}
S.~Xiang.
\newblock Quantitative rapid and finite time stabilization of the heat
  equation.
\newblock {\em Preprint, arXiv:2010.04696}, 2020.

\bibitem{zbMATH06200909}
Cheng-Zhong Xu and George Weiss.
\newblock Eigenvalues and eigenvectors of semigroup generators obtained from
  diagonal generators by feedback.
\newblock {\em Commun. Inf. Syst.}, 11(1):71--104, 2011.

\bibitem{Zakharov}
V.~E. {Zakharov}.
\newblock {\em {Stability of periodic waves of finite amplitude on the surface
  of a deep fluid}}, volume~9.
\newblock 1968.

\bibitem{Zhang-finite}
C.~Zhang.
\newblock Finite-time internal stabilization of a linear 1-{D} transport
  equation.
\newblock {\em Systems Control Lett.}, 133:104529, 8, 2019.

\bibitem{ZhangRapidStab}
C.~Zhang.
\newblock Internal rapid stabilization of a 1-{D} linear transport equation
  with a scalar feedback.
\newblock {\em Math. Control Relat. Fields}, 12(1):169--200, 2022.

\bibitem{zbMATH07178296}
H.~{Zhu}.
\newblock {Control of three dimensional water waves}.
\newblock {\em {Arch. Ration. Mech. Anal.}}, 236(2):893--966, 2020.

\end{thebibliography}

\end{document}